\providecommand{\U}[1]{\protect\rule{.1in}{.1in}}
\theoremstyle{plain}
\newtheorem{lemma}{Lemma}
\newtheorem{proposition}{Proposition}
\newtheorem{remark}{Remark}
\newtheorem{theorem}{Theorem}
\numberwithin{equation}{section}
\newcommand{\lap}{\mbox{$\triangle$}}
\newcommand{\disp}{\displaystyle}
\newcommand{\eps}{\varepsilon}
\newcommand{\be}{\beta}
\newcommand{\te}{\theta}
\newcommand{\la}{\lambda}
\newcommand{\iny}{\infty}
\newcommand{\su}{\subset}
\newcommand{\LP}{\Delta}
\newcommand{\gr}{\nabla}
\newcommand{\norm}[1]{\left\vert\left\vert #1\right\vert\right\vert}
\newcommand{\abs}[1]{\left\vert#1\right\vert}
\newcommand{\set}[1]{\left\{#1\right\}}
\newcommand{\brac}[1]{\left[#1\right]}
\newcommand{\pr}[1]{\left( #1 \right) }
\newcommand{\pb}[1]{\left( #1 \right] }
\newcommand{\bp}[1]{\left[ #1 \right) }
\newcommand{\R}{\ensuremath{\mathbb{R}}}
\newcommand{\C}{\ensuremath{\mathbb{C}}}
\begin{document}
\title[Quantitative unique continuation ]
{Quantitative unique continuation of solutions to higher order elliptic
equations with singular coefficients}
\author{ Jiuyi Zhu}
\address{
Department of Mathematics\\
Louisiana State University\\
Baton Rouge, LA 70803, USA\\
Email:  zhu@math.lsu.edu }
\thanks{
Zhu is supported in part by  NSF grant DMS-1656845}
\date{}
\subjclass[2010]{35J15, 35J10, 35A02.} \keywords {Carleman
estimates, unique continuation,
 higher order elliptic equations, vanishing order}
%\dedicatory{}

\begin{abstract}
 We investigate the quantitative unique continuation of solutions
to higher order elliptic equations with singular coefficients. Quantitative unique continuation
described by the vanishing order is a quantitative form of strong
unique continuation property. We characterize the vanishing order of
solutions for higher order elliptic equations in terms of the norms
of coefficient functions in their respective
Lebesgue spaces. New versions of quantitative Carleman estimates are
established.
\end{abstract}

\maketitle
\section{Introduction}

In this paper, we study the quantitative unique continuation for higher order
elliptic equations with singular lower order terms. Suppose $u$ is a
non-trivial solution to
\begin{equation}
\triangle^m u+ \sum^{\alpha_0}_{|\alpha|=1} V_\alpha(x)\cdot
D^\alpha u+V_0(x) u=0 \quad \mbox{in} \ \mathbb B_{10}, \label{goal}
\end{equation}
 where $\mathbb B_{10}$ is a ball centered at origin with radius $10$ in $\mathbb R^n$
 with $n\geq 2$, The value $\alpha_0$ is a positive integer. If $m$ is a positive even
 integer, the value $\alpha_0\leq [\frac{3m}{2}]-1$.
 If $m$ is a positive odd  integer, the value $\alpha_0\leq [\frac{3m}{2}]
 $.
Assume that $V_\alpha(x)\in L^\infty(\mathbb B_{10})$ and $V_0(x)\in
L^s(\mathbb B_{10})$ for some positive constant $s$ to be specified
later. We also normalize the solutions $u$ in \eqref{goal} as
$\|u\|_{L^\infty(\mathbb B_{1})}\geq 1$ and $\|u\|_{L^\infty(\mathbb
B_{10})}\leq \hat{C}$.

Quantitative unique continuation described by the vanishing order
characterizes how much the solution vanishes. We say vanishing order
of solution at $x_0$ is $l$, if $l$ is the largest integer such that
$D^\alpha u(x_0) =0$ for all $|\alpha|< l$, where $\alpha$ is a
multi-index. It is a quantitative way to describe the strong unique
continuation property. So we also call it quantitative uniqueness. Strong unique continuation property states
that if a solution that vanishes of infinite order at a point
vanishes identically. We know that all zeros of nontrivial solutions
of second order linear equations on smooth compact Riemannian
manifolds are of finite order. Especially, for classic
eigenfunctions on a compact smooth Riemannian manifold
$\mathcal{M}$,
 $$-\lap_{g} \phi_\lambda=\lambda \phi_\lambda \quad \quad \mbox{in} \ \mathcal{M}.$$
Donnelly and Fefferman in \cite{DF88} showed that  the maximal
vanishing order of $\phi_\lambda$ is everywhere less than
$C\sqrt{\lambda}$, here $C$ only depends on the manifold
$\mathcal{M}$. The vanishing order of classical eigenfunction
$\phi_\lambda$ is sharp and its sharpness can be seen from spherical
harmonics if $\mathcal{M}$ is a sphere. If the strong unique
continuation property holds for the solutions and solutions do not
vanish of infinite order, the vanishing order of solutions depends
on the potential functions and coefficient functions appeared in the
equations. It is interesting to characterize the vanishing order by
the potential function $V_0(x)$ and  coefficient functions
$V_\alpha(x)$ in (\ref{goal}).

Recently, there has been much interest in investigating the
vanishing order of solutions for (\ref{goal}) in the case $m=1$,
i.e. the second order elliptic equation
\begin{equation} \triangle u+ V_1(x)\cdot \nabla u+V_0(x) u=0  \quad
\mbox{in} \ \mathbb B_{10}. \label{goal2}
\end{equation}
Kukavica in \cite{Ku98} studied the vanishing order of solutions for
Schr\"{o}dinger equation
\begin{equation}
\lap u+V_0(x)u=0. \label{schro}
\end{equation}
 If $V_0(x) \in W^{1, \infty}$, Kukavica  established that the upper bound of
vanishing order is less than $C(1+\|V_0\|_{W^{1, \infty}})$,
 where $\|V_0\|_{W^{1, \infty}}=\|V_0\|_{L^\infty}+\|\nabla
V_0\|_{L^\infty}$. This upper bound is not sharp, which can be seen
from Donnelly and Fefferman's work in the case $V_0(x)=\lambda$.
Recently, the sharp vanishing order for solutions of (\ref{schro})
is shown to be less than $C(1+\sqrt{\|V_0\|_{W^{1, \infty}}})$
independently by Bakri in \cite{Bak12} and Zhu in \cite{Zhu16} by
different methods, since the exponent of the norm of potential
function $V_0(x)$ matches the one for eigenfunctions in Donnelly and
Fefferman's work.

If $V_0(x)\in L^\infty$, Bourgain and Kenig \cite{BK05} considered a
similar problem for (\ref{schro}) motivated by their work on
Anderson localization for the Bernoulli model. Bourgain and Kenig
established that
\begin{equation}
\|u\|_{L^\infty (\mathbb B_r)}\geq c_1r^{c_2
(1+\|V_0\|_{L^\infty}^{\frac{2}{3}})} \quad \quad \mbox{as}\ r\to 0,
\label{like}
\end{equation}
 where $c_1,
c_2$ depend only on $n$, $\hat{C}$.  The estimate (\ref{like})
shows that the order of vanishing for solutions is less than
$C(1+\|V_0\|_{L^\infty}^{\frac{2}{3}}). $ Kenig in \cite{Ken07} also
pointed out that the exponent $\frac{2}{3}$ of $\|V_0\|_{L^\infty}$
is sharp for complex valued $V_0$ based on Meshkov's example in
\cite{Mes92}.

 Davey in \cite{Dav14} generalized the quantitative unique continuation result to
solutions to more general elliptic equations of the form $\LP u +
V_1 \cdot \gr u + V_0 u = \la u$, where $\la \in \C$, and $V_0$,
$V_1$ are complex-valued potential functions with pointwise decay at
infinity. Davey proved that the order of vanishing for such
solutions is less than
$C(1+\|V_1\|_{L^\infty}^{2}+\|V_0\|_{L^\infty}^{2/3})$. See also the
similar work in \cite{Bak13}. Furthermore, The results in
\cite{Dav14} were extended to  variable-coefficient operators by Lin
and Wang in \cite{LW14}.

Based on Donnelly and Fefferman's work on the vanishing order of
eigenfunctions, Kenig \cite{Ken07} asked if the order of vanishing
can be reduced to $C(1+\|V_0\|_{L^\infty}^{1/2})$ for real-valued
$u$ and $V_0$ for the solutions in (\ref{schro}). It is related to a
quantitative form of Landis' conjecture in the real-valued setting.
In the late 1960s, E.M.~Landis conjectured that the bounded solution
$u$ to $\LP u - V_0 u = 0$ in $\R^n$ is trivial if $\abs{u\pr{x}}
\lesssim \exp\pr{- c \abs{x}^{1+}}$, where $V_0$ is a bounded
function.  By assuming that the bounded real valued $V_0(x)$ to be
nonnegative, Landis' conjecture was answered in \cite{KSW15} in
$\mathbb R^2$.

It is known  that the strong unique continuation property holds for
second order elliptic equation (\ref{goal2}) with singular lower
terms satisfying the integrability condition, i.e.
$$  V_1\in L^{t} \ \mbox{with} \ t>n \quad \mbox{and} \quad  V_0\in L^{\frac{n}{2}}. $$
See e.g.  \cite{H85}, \cite{JK85}, \cite{S90}, \cite{Wol92},
\cite{KT01} for the literature about strong unique continuation
property, to just mention a few. Recently, interest has been shifted
to know how the singular lower order terms control the order of
vanishing of solutions. Kenig and Wang in \cite{KW15} studied the
quantitative uniqueness of solutions to second order elliptic
equations with a drift term $\LP u + V_1 \cdot \gr u = 0$ in
$\mathbb R^2$ using complex analytic tools. They established the
vanishing order estimates for solutions in the case that real-valued
$V_1 \in L^s\pr{\R^2}$ for some $s \in \bp{2, \iny}$.
 In \cite{KT16}, Klein and Tsang studied quantitative unique continuation for solutions to $\LP u + V_0 u = 0$ motivated by
 spectral projection of Schr\"odinger operators,
 where $V_0 \in L^s$ for some $s \ge n $. Their tools are Carleman estimates as that in \cite{BK05} and
 Sobolev imbedding arguments.
 It seems that their method can not be adapted to study elliptic equations with singular gradient potentials.
Very recently, by a new quantitative $L^p\to L^q$ Carleman estimates
for a range of $p$ and $q$ value, Davey and the author in
\cite{DZ17} were able to deal with (\ref{goal2}) with both singular
gradient potential $V_1$ and singular potential $V_0$ for $n\geq 3$.
Our results not only work for a larger range of singular potentials
and gradient potentials, but also improve the
 previous results on vanishing order of solutions. For $n=2$, Davey and the author in \cite{DaZ17} further explored the  $L^p\to L^q$ Carleman estimates developed in \cite{DZ17}.
 They were able to characterize
 vanishing order for all admissible singular potentials and gradient potentials, which provides a complete description of quantitative uniqueness for second order elliptic equations in $n=2$.

Higher order elliptic equations are important models in the study of
partial differential equations. We
assume throughout the paper that $m\geq 2$. A nature question is to study the
quantitative uniqueness of higher order elliptic equations. However, it is relatively less explored in the literature. The strong unique
continuation property has been well investigated for higher order
elliptic equations. See e.g. \cite{CG99}, \cite{L07}, \cite{CK10},
to just mention a few. In particular, this property has been shown
for singular potential $V_0$ and singular coefficient functions
$V_\alpha$ in \cite{L07}. The $\alpha$ value up to $[\frac{3m}{2}]$
for unique continuation was given by Protter \cite{P60}.
The vanishing order for higher order elliptic equations was
considered in \cite{Zhu16}. For the model
\begin{equation}
\lap ^m u+V_0u=0 \quad \mbox{in} \ \mathbb B_{10}, \label{zhu}
\end{equation}
it was shown in \cite{Zhu16} that the vanishing order of $u$ is less
than $C\|V_0\|_{L^\infty}$ for $n\geq 4m$ by a variant of frequency
function.  Lin, Nagayasu and Wang studied a different quantitative
uniqueness result for higher order elliptic equations in
\cite{LNW11},  where the vanishing order of solutions was not
explicitly provided in term of the potential function $V_0$ and the
coefficient functions $V_\alpha$.

 A priori, we assume that $u \in W^{m,2}_{loc}\pr{\mathbb B_{10}}$ is a
weak solution to \eqref{goal}. By regularity theory, it follows that
$u \in W_{loc}^{2m,2} \cap L^\iny_{loc}$.  In the paper, the
notation $\mathbb B_{r}\pr{x_0} \su \R^n$ is denoted as the ball of
radius $r$ centered at $x_0$. When the center is clear in the
context, we simply write $\mathbb B_{r}$. To fully discuss the vanishing order for higher order elliptic equations,
our result is stated as three cases in term of the relation of
$n$ and $m$.

\begin{theorem}
Let $u$ be a solution to \eqref{goal} in $\mathbb B_{10}$. \\
 I): In the
case of $n> 4m-2$, assume that $s\in (\frac{2n}{3m}, \ \infty]$.
Then the vanishing order of $u$ in  $\mathbb B_{1}$ is less than
$C(1+\sum_{|\alpha|=1}^{\alpha_0}\|V_\alpha\|_{L^\infty}^\mu+\|V_0\|_{L^s}^\nu)$.
That is, for any $x_0\in \mathbb B_1$ and every $r$ sufficiently
small,
\begin{align*}
\|u\|_{L^\iny(\mathbb B_{r}(x_0))}
 &\ge c r^{C(1+\sum_{|\alpha|=1}^{\alpha_0}\|V_\alpha\|_{L^\infty}^\mu+\|V_0\|_{L^s}^\nu)},
\end{align*}
where
$$ \mu=\frac{2}{3m-2|\alpha|} \quad \mbox{and} \quad \nu=\frac{2s}{3ms-2n}.           $$
and $c = c\pr{n, m, s, \hat C}$, $C = C\pr{n, m, s, \hat C}$.

II): In the case of $ n= 4m-2$, assume that $s\in
(\frac{4(2m-1)}{3m}, \, \infty]$.  Then for any sufficiently small
constant $\eps>0$, the vanishing order of $u$ in $\mathbb B_{1}$ is
less than
$C(1+\sum_{|\alpha|=1}^{\alpha_0}\|V_\alpha\|_{L^\infty}^\mu+\|V_0\|_{L^s}^{\tilde{\nu}})$.
That is, for any $x_0\in \mathbb B_1$ and every $r$ sufficiently
small,
\begin{align*}
\|u\|_{L^\iny(\mathbb B_{r}(x_0))}
 &\ge c
 r^{C(1+\sum_{|\alpha|=1}^{\alpha_0}\|V_\alpha\|_{L^\infty}^\mu+\|V_0\|_{L^s}^{\tilde{\nu}})},
\end{align*}
where
$$ \mu=\frac{2}{3m-2|\alpha|} \quad \mbox{and} \quad \tilde{\nu}= \frac{2s}{3ms-4(2m-1)-2(2m-1)(s-2)\epsilon},         $$
and $c = c\pr{n, m, s, \eps, \hat C}$, $C = C\pr{n,  m, s, \eps,
\hat C}$.

III): In the case $2\leq n<4m-2$, assume that $s\in
(\frac{4(2m-1)}{3m}, \, \infty]$. Then the
vanishing order of $u$ in $\mathbb B_{1}$ is less than
$C(1+\sum_{|\alpha|=1}^{\alpha_0}\|V_\alpha\|_{L^\infty}^\mu+\|V_0\|_{L^s}^{\bar{\nu}})$. That
is, for any $x_0\in \mathbb B_1$ and every $r$ sufficiently small,
\begin{align*}
\|u\|_{L^\iny(\mathbb B_{r}(x_0))}
 &\ge c r^{C(1+\sum_{|\alpha|=1}^{\alpha_0}\|V_\alpha\|_{L^\infty}^\mu+\|V_0\|_{L^s}^{\bar{\nu}})},
\end{align*}
where
$$ \mu=\frac{2}{3m-2|\alpha|} \quad \mbox{and} \quad \bar{\nu}= \frac{2s}{3ms-4(2m-1)},         $$
and $c = c\pr{n, m, s,  \hat C}$, $C = C\pr{n,  m, s,
\hat C}$.
 \label{thh}
\end{theorem}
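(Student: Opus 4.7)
The plan is to adapt the $L^p\to L^q$ Carleman strategy developed in \cite{DZ17,DaZ17} for second order equations to the polyharmonic operator $\triangle^m$, while tracking powers of the large parameter $\tau$ so as to absorb both the bounded gradient potentials $V_\alpha$ (for $1\le|\alpha|\le\alpha_0$) and the singular potential $V_0\in L^s$. The core analytic ingredient I would establish is a family of quantitative Carleman inequalities of the shape
\begin{equation*}
\sum_{|\alpha|\le\alpha_0}\tau^{\frac{3m}{2}-|\alpha|}\,\bigl\|e^{-\tau\phi}\,D^\alpha v\bigr\|_{L^{q}(\mathbb B_{10})}
\;\le\;C\,\bigl\|e^{-\tau\phi}\,\triangle^m v\bigr\|_{L^{p}(\mathbb B_{10})},
\end{equation*}
valid for all smooth $v$ compactly supported away from the origin and all $\tau$ larger than some threshold, where $\phi(x)=-\log|x|+\text{(radial convexification)}$ is the usual polar weight and the pair $(p,q)$ is chosen to be Sobolev-critical for $\triangle^m$: namely $\tfrac{1}{p}-\tfrac{1}{q}=\tfrac{2m}{n}$ in Case I, the borderline $W^{2m,p}\hookrightarrow L^{q}$ with an $\eps$-loss in Case II, and $L^2\to L^\infty$ in Case III. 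The power $\tau^{3m/2-|\alpha|}$ on each derivative encodes the symbol calculus of $\triangle^m$ conjugated by $e^{-\tau\phi}$ and is precisely what dictates the exponents $\mu=\tfrac{2}{3m-2|\alpha|}$.

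With such an estimate in hand, the deduction of the vanishing order is a standard three-ball/iteration argument. I would apply the inequality to $v=\chi u$, where $\chi$ is a cutoff supported in an annulus $\mathbb B_{R}\setminus\mathbb B_{r}$, split the right-hand side into a commutator term (supported where $\nabla\chi\ne0$) and the main contribution $\chi\,\triangle^m u$. Using \eqref{goal} to substitute
\begin{equation*}
\chi\,\triangle^m u=-\sum_{1\le|\alpha|\le\alpha_0}\chi\,V_\alpha\,D^\alpha u-\chi\,V_0\,u,
\end{equation*}
one estimates
$\|e^{-\tau\phi}\chi V_\alpha D^\alpha u\|_{L^p}\le\|V_\alpha\|_{L^\infty}\|e^{-\tau\phi}\chi D^\alpha u\|_{L^p}$
and, via H\"older with $\tfrac{1}{p}=\tfrac{1}{s}+\tfrac{1}{q}$,
$\|e^{-\tau\phi}\chi V_0 u\|_{L^p}\le\|V_0\|_{L^s}\|e^{-\tau\phi}\chi u\|_{L^q}$.
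These terms are then absorbed on the left provided
\[
\tau^{\frac{3m-2|\alpha|}{2}}\gtrsim\|V_\alpha\|_{L^\infty}\quad\text{and}\quad\tau^{\frac{3m}{2}-\frac{n}{s}\cdot(\text{Sobolev gap})}\gtrsim\|V_0\|_{L^s},
\]
and solving for the minimal admissible $\tau$ yields exactly the exponents $\mu$, $\nu$, $\tilde\nu$, $\bar\nu$ stated in the three cases, with the case distinction $n\gtrless 4m-2$ arising from whether the Sobolev pair $(p,q)$ stays in the finite range, lies on the endpoint, or saturates at $L^\infty$.

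The final step is to turn the absorbed Carleman inequality into a doubling/three-ball estimate of the form
\begin{equation*}
\|u\|_{L^\infty(\mathbb B_{2r}(x_0))}\le e^{C\tau_0}\|u\|_{L^\infty(\mathbb B_{r}(x_0))}^{\theta}\|u\|_{L^\infty(\mathbb B_{1})}^{1-\theta},
\end{equation*}
with $\tau_0$ equal to the minimal admissible threshold from above, and then iterate using the normalization $\|u\|_{L^\infty(\mathbb B_1)}\ge 1$, $\|u\|_{L^\infty(\mathbb B_{10})}\le\hat C$. Translation invariance lets $x_0$ range over $\mathbb B_1$. The resulting lower bound $\|u\|_{L^\infty(\mathbb B_r(x_0))}\ge c\, r^{C\tau_0}$ is exactly the theorem.

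The main obstacle, in my view, is the derivation of the polyharmonic Carleman estimate itself with the right joint control on all intermediate derivatives $D^\alpha u$ up to $|\alpha|=\alpha_0=[3m/2]$ or $[3m/2]-1$: second order treatments handle at most one gradient, whereas here one needs a full Sobolev ladder conjugated by $e^{-\tau\phi}$, and the admissible range $|\alpha|\le[3m/2]$ (which is the Protter threshold) means the top derivative sits right at the boundary where the weighted symbol $|\xi-i\tau\nabla\phi|^{2m}$ ceases to dominate the subprincipal term. Managing this uniformly in $\tau$, together with the $\eps$-loss needed at the Sobolev endpoint $n=4m-2$, is where the technical heart of the argument lies; once it is in place the absorption, optimization in $\tau$, and iteration proceed along the now-standard Bourgain--Kenig template.
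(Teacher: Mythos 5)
Your overall template --- a quantitative polyharmonic Carleman estimate with $\tau$-powers $\tau^{(3m-2|\alpha|)/2}$, absorption of $V_\alpha$ in $L^\infty$ and of $V_0$ via H\"older against a Sobolev-gained $L^q$ norm, then a three-ball inequality and Bourgain--Kenig iteration --- is the same as the paper's, and the exponents $\mu,\nu$ you extract from the absorption threshold are correct. However there are three specific gaps/errors worth flagging.

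First, the Sobolev numerology is off. You posit the gap $\tfrac1p-\tfrac1q=\tfrac{2m}{n}$, which would put the critical dimension at $n=4m$. The actual mechanism is different: the $L^2$ Carleman estimate for $\triangle^m$ (obtained by iterating a second-order $L^2$ estimate $m$ times) controls weighted derivatives only up to order $2m-1$, and the $L^p$ gain on the zeroth-order term comes from $W^{2m-1,2}\hookrightarrow L^{2n/(n-2(2m-1))}$, i.e.\ gap $(2m-1)/n$. This is precisely why the case split occurs at $n=2(2m-1)=4m-2$, which you state but do not derive; with your gap you would get $n=4m$ instead. Relatedly, one should not (and cannot) ask for an $L^q$ gain on all $D^\alpha v$, $|\alpha|\le\alpha_0$: only the zeroth-order term is upgraded to $L^p$, while the derivative terms stay in $L^2$ with radial weights $r^{|\alpha|}$, which suffices since $V_\alpha\in L^\infty$ needs no H\"older loss.

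Second, you omit what is arguably the most delicate step in passing from the absorbed Carleman estimate to a usable three-ball inequality. The commutator $[\triangle^m,\chi]$ produces terms involving $D^\alpha u$, $|\alpha|\le 2m-1$, supported on the cutoff annuli; bounding these by $\|u\|_{L^2}$ on slightly larger annuli requires a quantitative Caccioppoli-type inequality for the higher-order equation with an explicit polynomial dependence on $\|V_\alpha\|_{L^\infty}$ and $\|V_0\|_{L^s}$ (the paper's Lemma~\ref{hormmm}, built from H\"ormander's interior estimates plus a Sobolev/bootstrap to handle $V_0u$). Moreover, since there is no De Giorgi--Nash--Moser theory for $\triangle^m$, the $L^\infty$ three-ball inequality is not immediate from the $L^2$ one: one needs a separate local $L^\infty$ bound of the form $\|u\|_{L^\infty(\mathbb B_r)}\le C\beta^{n/2} r^{-n/2}\|u\|_{L^2(\mathbb B_{2r})}$ obtained by a scaled $W^{2m,p}$-bootstrap (the paper's Lemma~\ref{lemma2}). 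Your proposal jumps directly from the Carleman estimate to an $L^\infty$ doubling inequality, which leaves these two auxiliary lemmas --- both of which carry the explicit $\beta$-dependence that ultimately feeds into the constant in the vanishing-order bound --- unaccounted for.

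Third, a smaller point: the ``main obstacle'' you identify (subprincipal terms at the Protter threshold $|\alpha|\le[3m/2]$) is not really where the technical work lies in this proof. Since $[3m/2]\le 2m-1$ for $m\ge2$, all the needed derivatives already sit inside the range controlled by iterating the second-order $L^2$ estimate; the genuine work is in choosing a log-linear weight $\phi(r)=\log r+\log(\log r)^2$ that is compatible with iteration (the powers of $\log r$ shift by one per iteration, hence the $(\log r)^{-m}$ weights), and then carrying out the Sobolev-plus-interpolation step carefully in the three dimensional regimes.
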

Before we proceed, let us give some comments on Theorem \ref{thh}.
\begin{remark}
1. The vanishing order of solution is heavily relied on the Carleman estimates in Theorem \ref{CCarlpqp}, which is split into three cases.
To obtain the Carleman estimates for singular weights in suitable Lebesgue spaces,  Sobolev inequalities are used in the arguments. The application of Sobolev embedding implies those cases by the relation of $n$ and $m$.

2. In \cite{Zhu16}, the author developed a variant of frequency function to obtain the vanishing order less
than $C\|V_0\|_{L^\infty}$ for $n\geq 4m$ in (\ref{zhu}). In addition to the situation $n\geq 4m$, Theorem \ref{thh} provides the description of vanishing order for all cases.
Observe that the vanishing order of solution is less than $C\|V_0\|_{L^\infty}^{\frac{2}{3m}}$ if $V_\alpha=0$ and $s=\infty$.
Theorem \ref{thh} not only improves the vanishing order in the case of $s=\infty$  in \cite{Zhu16}, but also enables us to deal with singular potential $V_0$ and non-trivial coefficient function $V_\alpha$.

3. Because of the rich results for second order elliptic equation in the case of $m=1$ in \eqref{goal}, we assume $m\geq 2$ in the paper. However, the statement in Case I and II still applies to the case $m=1$. Observe that those conclusions match the sharp results by \cite{BK05} in the case of $s=\infty$ and $m=1$.

\end{remark}

Based on the result of vanishing order, one can show the
quantitative unique continuation at infinity. The quantitative
unique continuation at infinity is characterized by a lower bound
for $\mathcal{M}\pr{R}$, where
\begin{equation}
\mathcal{M}\pr{R} := \inf_{|x_0|=R}\sup_{\mathbb B_1(x_0)}|u(x)|.
\label{MRDef}
\end{equation}
For the equation (\ref{schro}) in $\mathbb R^n$, it is shown in
\cite{BK05} that
\begin{equation}
\mathcal{M}\pr{R} \ge c \exp\brac{-C R^\frac{4}{3} \log R}
\label{vanish}
\end{equation}
from a scaling argument using the estimates (\ref{like}). We are
able to show the following characterization of solution at infinity
for higher order elliptic equations.
\begin{theorem}
Assume that $\norm{V_\alpha}_{L^\infty\pr{\R^n}} \le A_\alpha$ and
$\norm{V_0}_{L^s\pr{\R^n}} \le A_0$. Let $u$ be a solution to
\eqref{goal} in $\R^n$. Assume that $\norm{u}_{L^\iny\pr{\R^n}} \le
C_0$ and $\abs{u\pr{0}} \ge 1$.

I): In the case of $n> 4m-2$, assume that $s\in (\frac{2n}{3m}, \
\infty]$. Then for $R >> 1$,
\begin{equation*}
\mathcal{M}\pr{R} \ge c \exp\brac{-C R^\Theta \log R},
\end{equation*}
where $\disp \Theta= \left\{\begin{array}{ll}
\frac{2(2m -\alpha_0)}{3m-2\alpha_0} &  \alpha_0 \geq \frac{n}{s} \bigskip \\
\frac{2(2ms-n)}{3ms-2n} & \alpha_0 < \frac{n}{s}
\end{array}\right.$, $c = c\pr{n, m, s, C_0}$, and $C = C\pr{n,
m, s, C_0, A_0, \cdots, A_{\alpha_0}}$.

II): In the case of $ n=4m-2$, assume that $s\in
(\frac{4(2m-1)}{3m}, \, \infty]$, Then for any sufficiently small
constant $\eps>0$ and $R
>> 1$,
\begin{equation*}
\mathcal{M}\pr{R} \ge c \exp\brac{-C R^{\tilde\Theta} \log R},
\end{equation*}
where $\disp \tilde{\Theta} = \left\{\begin{array}{ll}
\frac{2(2m -\alpha_0)}{3m-2\alpha_0} \quad &  \alpha_0 \geq \frac{8m(2m-1)-3mn+4m(2m-1)(s-2)\epsilon}{ms+4(2m-1)-2n+2(2m-1)(s-2)\epsilon} \bigskip \\
\frac{2(2ms-n)}{3ms-4(2m-1)-2(2m-1)(s-2)\epsilon} & \alpha_0 <
\frac{8m(2m-1)-3mn+4m(2m-1)(s-2)\epsilon}{ms+4(2m-1)-2n+2(2m-1)(s-2)\epsilon}
\end{array}\right.$, \\ $c = c\pr{n, s, m, \eps, C_0}$, and $C = C\pr{n, s,
m, \eps, C_0, A_0, \cdots, A_{\alpha_0}}$.

III): In the case of $2\leq n<4m-2$,  assume that $s\in
(\frac{4(2m-1)}{3m}, \, \infty]$. Then
for $R >> 1$,
\begin{equation*}
\mathcal{M}\pr{R} \ge c \exp\brac{-C R^{\bar{\Theta}} \log R},
\end{equation*}
where $\disp \bar{\Theta} = \left\{\begin{array}{ll}
\frac{2(2m -\alpha_0)}{3m-2\alpha_0} \quad &  \alpha_0 \geq \frac{8m(2m-1)-3mn}{ms+4(2m-1)-2n} \bigskip \\
\frac{2(2ms-n)}{3ms-4(2m-1)} & \alpha_0 <
\frac{8m(2m-1)-3mn}{ms+4(2m-1)-2n}
\end{array}\right.$, \\ $c =
c\pr{n, s, m, C_0}$,  and $C = C\pr{n, s, m, C_0, A_0, \cdots,
A_{\alpha_0}}$. \label{UCVW}
\end{theorem}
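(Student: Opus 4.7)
The plan is to derive Theorem \ref{UCVW} from Theorem \ref{thh} via a rescaling argument, exactly in the spirit of Bourgain and Kenig's passage from \eqref{like} to \eqref{vanish}. Fix $x_0 \in \R^n$ with $|x_0|=R \gg 1$ and define $\tilde{u}(y) := u((R+1)y)$ on $\mathbb B_{10}$. A direct computation shows that $\tilde u$ solves a higher order elliptic equation of the same form, with rescaled coefficients
$$\tilde V_\alpha(y) = (R+1)^{2m-|\alpha|}\, V_\alpha((R+1)y), \qquad \tilde V_0(y) = (R+1)^{2m}\, V_0((R+1)y),$$
whose norms obey
$$\|\tilde V_\alpha\|_{L^\infty(\mathbb B_{10})} \leq (R+1)^{2m-|\alpha|}\, A_\alpha, \qquad \|\tilde V_0\|_{L^s(\mathbb B_{10})} \leq (R+1)^{2m-n/s}\, A_0,$$
where the exponent $2m - n/s$ comes from the change of variables in the $L^s$-integral. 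The hypotheses $|u(0)|\ge 1$ and $\|u\|_{L^\infty(\R^n)}\le C_0$ translate to $|\tilde u(0)|\ge 1$ and $\|\tilde u\|_{L^\infty(\mathbb B_{10})}\le C_0$, so the normalization required in Theorem \ref{thh} is satisfied by $\tilde u$.

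Next, apply Theorem \ref{thh} to $\tilde u$ at the point $y_0 := x_0/(R+1) \in \mathbb B_1$ with radius $r = 1/(R+1)$. Since $\sup_{\mathbb B_1(x_0)}|u| = \sup_{\mathbb B_{1/(R+1)}(y_0)}|\tilde u|$, in Case I we obtain
$$\sup_{\mathbb B_1(x_0)} |u| \;\geq\; c\,(R+1)^{-C\bigl[1 + \sum_{|\alpha|=1}^{\alpha_0}\|\tilde V_\alpha\|_{L^\infty}^{\mu} + \|\tilde V_0\|_{L^s}^{\nu}\bigr]}.$$
Plugging in the rescaled norms and absorbing the $A_\alpha$, $A_0$ into $C$, the exponent is at most $C\, R^{\Theta_\ast}\log R$, where
$$\Theta_\ast \;=\; \max_{1\le|\alpha|\le\alpha_0}\frac{2(2m-|\alpha|)}{3m-2|\alpha|} \;\vee\; \frac{2(2ms-n)}{3ms-2n},$$
and the $\log R$ factor comes from rewriting $(R+1)^\gamma = \exp(\gamma\log(R+1))$.

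The function $t \mapsto \frac{2(2m-t)}{3m-2t}$ has derivative $\frac{2m}{(3m-2t)^2}>0$ on $[0,3m/2)$, hence its maximum on $\{1,\dots,\alpha_0\}$ is attained at $|\alpha|=\alpha_0$. Comparing the two remaining candidates $\frac{2(2m-\alpha_0)}{3m-2\alpha_0}$ and $\frac{2(2ms-n)}{3ms-2n}$ via cross-multiplication reduces to $m(\alpha_0 s - n) \gtreqless 0$, i.e.\ to the threshold $\alpha_0 \gtreqless n/s$; this is exactly the dichotomy defining $\Theta$ in Case I. Cases II and III are handled in the same way, with $\nu$ replaced by $\tilde\nu$ or $\bar\nu$: the same monotonicity in $|\alpha|$ singles out the endpoint $|\alpha|=\alpha_0$, and the analogous cross-multiplication identifies the thresholds on $\alpha_0$ and the exponents $\tilde\Theta$, $\bar\Theta$ written in the statement.

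The main obstacle is purely bookkeeping rather than analytical: all the work is done by Theorem \ref{thh}, the scaling identity, and the monotonicity of a single rational function. The one slightly delicate step is Case II, where the $\epsilon(s-2)$ correction in $\tilde\nu$ produces the intricate threshold $\frac{8m(2m-1)-3mn+4m(2m-1)(s-2)\epsilon}{ms+4(2m-1)-2n+2(2m-1)(s-2)\epsilon}$; tracking signs and powers of $(2m-1)$ carefully through the cross-multiplication is what allows this particular combinatorial expression to fall out cleanly.
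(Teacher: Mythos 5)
Your proposal is correct and follows essentially the same route as the paper: rescale $u$ by a factor comparable to $R$ so that the hypotheses of Theorem \ref{thh} are met on $\mathbb B_{10}$, apply that theorem with $r\sim 1/R$, track how the rescaled norms $\|\tilde V_\alpha\|_{L^\infty}\sim R^{2m-|\alpha|}$ and $\|\tilde V_0\|_{L^s}\sim R^{2m-n/s}$ enter the exponent, and determine $\Theta$ by comparing $(2m-|\alpha|)\mu$ (increasing in $|\alpha|$, hence maximized at $\alpha_0$) with $(2m-n/s)\nu$ (resp.\ $\tilde\nu$, $\bar\nu$) via cross-multiplication. The only cosmetic deviations are that you center the rescaled function at the origin with factor $R+1$ while the paper uses $u_R(x)=u(x_0+Rx)$ and evaluates at $0$; both give the same estimate.
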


\begin{remark}
1. In particular, the vanishing at infinity as (\ref{vanish}) for the
higher order elliptic equation (\ref{zhu}) with  $V_0\in L^\infty$ was shown
 in \cite{HWZ16}. If $s=\infty$ and $V_\alpha=0$, our Theorem will implies (\ref{vanish}) as well. Obviously, the results in \cite{HWZ16}
 is just a special case of Theorem \ref{UCVW}. The work enables us to deal with  the singular potential $V_0$ as well as the presence of coefficient function $V_\alpha$ for the results of vanishing at infinity.

2. The case I and II in  Theorem \ref{UCVW} also work for the case $m=1$. If $V_\alpha=0$, $m=1$ and $s=\infty$, the conclusions match the sharp result (\ref{vanish}) for the seconder order elliptic equation in \cite{BK05}. Clearly, we have obtained the results for much more general cases.

\end{remark}

Generally speaking, the frequency function  and Carleman estimates
are two major ways to obtain qualitative and quantitative unique
continuation results for solutions of partial differential
equations.  The frequency function describes the local growth rate
of $u$ and is considered as a local measure of its ``degree" for a
polynomial like function in $\mathbb B_r$. See e.g. \cite{GL86},
\cite{GL87}, \cite{Ku98}, \cite{Zhu16} for the application of
frequency function, to just mention a few. Carleman estimates are
weighted integral inequalities. To obtain the quantitative
uniqueness results for solutions, one usually uses the Carleman
estimates with a special choice of weight functions to obtain some
type of Hadamard's three-ball theorem, then employ ``propagation of
smallness" argument to obtain maximal order of vanishing. In this
paper, we establish a new quantitative $L^2 \to L^p$ Carleman
inequalities with a range of $p$ value for higher order elliptic
operators. We first derive a quantitative $L^2 \to L^2$ Carleman
inequalities involving terms for every order derivative for second
order elliptic operators. Then, using an iterative procedure, we
obtain a quantitative $L^2 \to L^2$ Carleman inequalities for higher
order elliptic operators. The $L^2 \to L^p$ Carleman estimates are
attained from Sobolev embedding and an interpolation argument, which
adapts the idea in \cite{DZ17}.

Let us comment on the organization of the article. Section
\ref{CarlEst} is devoted to obtaining Carleman estimates for the
higher order elliptic operators with singular potential functions
$V_0$ and coefficient functions $V_\alpha$. In Section
\ref{CarlProofs}, the main tool $L^2\to L^p$ Carleman estimates are
established. We also derive some type of quantitative Caccioppoli
inequality and $L^\infty$ type estimates for higher order elliptic
equations. In section \ref{vanOrd}, we deduce three-ball
inequalities from the Carleman estimates and obtain the vanishing
order estimates from the propagation of smallness argument. The
proof of Theorem \ref{UCVW} is presented in Section \ref{QuantUC}.
The letters $c$, $C$, $C_0$ and $C_1$ denote generic positive
constants that do not depends on $u$, and may vary from line to
line. In the paper, the norm $\|V_0\|_{L^s}$ and $\|V_\alpha\|_{L^\infty}$ are assumed to be sufficiently large. Otherwise, we may assume that $\|V_0\|_{L^s}\leq M_0$ and $\|V_\alpha\|_{L^\infty}\leq M_\alpha$ for some sufficiently large $M_0$ and $M_\alpha$. Then we may replace the $\|V_0\|_{L^s}$ by  $M_0$ and the $\|V_\alpha\|_{L^\infty}$ by $M_\alpha$ in Theorem \ref{thh}.

\section{Carleman estimates }
\label{CarlEst}

In this section, we state the crucial tools, the quantitative
 Carleman estimates. Set
$$\phi(r)=\log r+\log(\log r)^2.$$
Let $r=|x-x_0|$. We use the notation $\|u\|_{L^p(r^{-n} dx)}$ to
denote the $L^p$ norm with weight $r^{-n}$, i.e. $$\|u\|_{L^p(r^{-n}
dx)} = \disp \pr{\int |u|^p r^{-n}\, dx}^\frac{1}{p}.$$ Our
quantitative Carleman estimate for the higher order elliptic
operators is stated as follows. Three cases are discussed in term of
the relation of $n$ and $m$.

\begin{theorem}
(I): In the case of $n>4m-2$ and $2 \leq p \leq \frac{2n}{n-4m+2}$,
there exist a constant $C$ and a sufficiently small $R_0$ such that
for any $u\in C^{\infty}_{0}\pr{\mathbb
B_{R_0}(x_0)\backslash\set{x_0} }$ and $\tau>1$, one has
\begin{align}
&\tau^{\be_0} \|e^{-\tau \phi(r)}(\log r)^{-m} u\|_{L^p(r^{-n}dx)} +
\sum^{2m-1}_{|\alpha|=1} \tau^{\be_\alpha} \|e^{-\tau \phi(r)} (\log
r )^{-m} r^{|\alpha|}
 D^\alpha u\|_{L^2(r^{-n}dx)}
\nonumber \medskip\\
&\leq  C \| e^{-\tau \phi(r)} r^{2m} \triangle^m u\|_{L^2(r^{-n}
dx)} , \label{mainth}
\end{align}
where $\be_0 =\frac{3mp-n(p-2)}{2p} $ and $\be_\alpha =
\frac{3m-2|\alpha|}{2}$.

 (II): In the case of $ n= 4m-2$ and $2 \leq p <
 \infty$,
 there exist a constant $C$ and a sufficiently small $R_0$ such that
for any $u\in C^{\infty}_{0}\pr{\mathbb
B_{R_0}(x_0)\backslash\set{x_0} }$ and $\tau>1$, one has
\begin{align}
&\tau^{\tilde{\be}_0}\| e^{-\tau \phi(r)}(\log r)^{-m}u\|_{L^p}  +
\sum^{2m-1}_{|\alpha|=0} \tau^{\be_\alpha} \|e^{-\tau \phi(r)} (\log
r )^{-m} r^{|\alpha|}
 D^\alpha u\|_{L^2(r^{-n}dx)}
\nonumber \medskip\\
&\leq  C \| e^{-\tau \phi(r)} r^{2m} \triangle^m u\|_{L^2(r^{-n}
dx)} , \label{mainth2}
\end{align}
where $\tilde{\be}_0 =\frac{4m-2}{p}(1-\eps)-\frac{m-2}{2} $,
$\be_\alpha = \frac{3m-2|\alpha|}{2}$, and $0<\epsilon<1$ is
sufficiently small.

(III): In the case of $2\leq n<4m-2$ and $2\leq p\leq \infty$
 there exist a constant $C$ and a sufficiently small $R_0$ such that
for any $u\in C^{\infty}_{0}\pr{\mathbb
B_{R_0}(x_0)\backslash\set{x_0} }$ and $\tau>1$, one has
\begin{align}
&\tau^{\bar{\beta}_0}\| e^{-\tau \phi(r)}(\log r)^{-m}u\|_{L^p}  +
\sum^{2m-1}_{|\alpha|=0} \tau^{\be_\alpha} \|e^{-\tau \phi(r)} (\log
r )^{-m} r^{|\alpha|}
 D^\alpha u\|_{L^2(r^{-n}dx)}
\nonumber \medskip\\
&\leq  C \| e^{-\tau \phi(r)} r^{2m} \triangle^m u\|_{L^2(r^{-n}
dx)} , \label{mainth3}
\end{align}
where $\bar{\be}_0 =\frac{4m-2}{p}-\frac{m-2}{2} $,
$\be_\alpha = \frac{3m-2|\alpha|}{2}$.
 \label{CCarlpqp}
\end{theorem}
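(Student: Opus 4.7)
The proof follows a three-step strategy: first establish an $L^2\to L^2$ Carleman estimate for the plain Laplacian ($m=1$ case) that controls both $u$ and $\nabla u$ on the left; second iterate this base estimate to reach $\triangle^m$, in the process picking up all derivatives of order up to $2m-1$; third upgrade the $L^2$ control of $u$ on the left to an $L^p$ control by combining the iterated estimate with Sobolev embedding and interpolation. The three cases in the statement bifurcate only in the third step, according to whether the embedding $W^{2m-1,2}(\R^n)\hookrightarrow L^q$ is sub-critical, critical, or super-critical relative to $L^\infty$.

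For Step 1, I conjugate $-\triangle$ by $e^{-\tau\phi(r)}$ with $\phi(r)=\log r+2\log|\log r|$, split the conjugated operator $P_\tau=e^{-\tau\phi}\triangle e^{\tau\phi}$ into symmetric and antisymmetric parts $S_\tau+A_\tau$, and expand
\begin{equation*}
\|P_\tau v\|_{L^2(r^{-n}dx)}^2 = \|S_\tau v\|_{L^2(r^{-n}dx)}^2 + \|A_\tau v\|_{L^2(r^{-n}dx)}^2 + \langle [S_\tau,A_\tau]v,v\rangle.
\end{equation*}
The principal $\log r$ part of $\phi$ provides the scaling needed to match the singular weight $r^{2}$ on the right, while the $2\log|\log r|$ correction prevents the commutator from degenerating near the critical scales and produces the $(\log r)^{-1}$ gain on the left. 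After a spherical harmonic decomposition and a Hardy-type integration by parts, one obtains the base estimate
\begin{equation*}
\tau^{3/2}\|e^{-\tau\phi}(\log r)^{-1}u\|_{L^2(r^{-n}dx)} + \tau^{1/2}\|e^{-\tau\phi}(\log r)^{-1}r\nabla u\|_{L^2(r^{-n}dx)} \le C\|e^{-\tau\phi}r^{2}\triangle u\|_{L^2(r^{-n}dx)},
\end{equation*}
which is the $m=1$ version of all three target inequalities (in their $L^2$ restriction).

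For Step 2, I apply the base estimate to $\triangle^{m-1}u$ with source $\triangle^m u$, then to $\triangle^{m-2}u$ with source $\triangle^{m-1}u$, and so on, $m$ times in total. Each application yields an $L^2$ bound on $\triangle^k u$ and on $r\nabla\triangle^k u$, but to fill in the missing mixed derivatives $D^\alpha u$ with $|\alpha|\le 2m-1$, I invoke weighted interior elliptic regularity
\begin{equation*}
\sum_{|\beta|=2}\|r^{|\beta|}D^\beta v\|_{L^2(r^{-n}dx)} \le C\|r^{2}\triangle v\|_{L^2(r^{-n}dx)} + C\|v\|_{L^2(r^{-n}dx)},
\end{equation*}
valid annularly by a standard Calder\'on--Zygmund argument after the change of variable $t=\log r$ which turns $r^{-n}dx$ into the uniform measure on the infinite cylinder. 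The commutator errors produced by the weights $e^{-\tau\phi}$ and $(\log r)^{-k}$, as well as by the interior cutoffs, carry strictly lower powers of $\tau$ than the main terms and are absorbed into the left-hand side for $\tau$ large; after $m$ iterations the powers accumulate exactly to $\beta_\alpha=(3m-2|\alpha|)/2$ and one obtains the full $L^2$ Carleman inequality controlling every $r^{|\alpha|}D^\alpha u$ with $|\alpha|\le 2m-1$.

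For Step 3, set $w=e^{-\tau\phi}(\log r)^{-m}u$; Leibniz gives $r^{|\alpha|}D^\alpha w=e^{-\tau\phi}(\log r)^{-m}r^{|\alpha|}D^\alpha u$ plus lower-order corrections that carry strictly fewer $\tau$-powers and are absorbed. Hence the iterated $L^2$ estimate provides control of $\sum_{|\alpha|\le 2m-1}\|D^\alpha w\|_{L^2}$. Sobolev embedding applied to $w$ then produces $L^q$ control, and interpolation between this $L^q$ bound and the $L^2$ bound on $w$ yields the required $L^p$ estimate: in Case I ($n>4m-2$) the sharp Sobolev endpoint is $q_\ast=2n/(n-4m+2)$ and interpolation gives $\tau^{\beta_0}$ with $\beta_0=(3mp-n(p-2))/(2p)$; in Case II ($n=4m-2$) the endpoint is critical and Gagliardo--Nirenberg/Trudinger-type bounds give $L^p$ for every finite $p$ at the cost of an $\epsilon$-loss, producing the exponent $\tilde\beta_0$; in Case III ($n<4m-2$) we have $W^{2m-1,2}\hookrightarrow L^\infty$, so all $p\in[2,\infty]$ are admissible and interpolation gives $\bar\beta_0=(4m-2)/p-(m-2)/2$. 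The main obstacles I anticipate are the bookkeeping of commutator errors in Step 2 (each iteration introduces terms from differentiating $(\log r)^{-k}$ and from non-radial commutators that must all be shown to carry strictly lower $\tau$-order), and the borderline analysis in Case II, where one has to pick $\epsilon$ small enough to be harmless in the downstream vanishing-order arguments but positive enough to make the Sobolev step work.
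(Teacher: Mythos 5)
Your three-step outline (conjugated $L^2$ Carleman estimate, iteration to $\triangle^m$, Sobolev plus interpolation) matches the paper's overall strategy, and your Step~3 (the computation of $\beta_0$ in Case~I, the $\epsilon$-loss in Case~II, $W^{2m-1,2}\hookrightarrow L^\infty$ in Case~III) reproduces the paper's argument correctly. However, your Step~2 contains a genuine gap that the paper's Proposition~\ref{hicarl} is designed precisely to avoid.

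The base estimate you wrote controls only $u$ and $r\nabla u$ on the left and carries the fixed weights $(\log r)^{-1}$ (left) and $r^{2}$ (right). When you apply it to $\triangle^{m-1}u$ you gain $(\log r)^{-1}\triangle^{m-1}u$, but to apply it again to $\triangle^{m-2}u$ you need the right-hand side to carry the weight $r^{2}\triangle^{m-1}u$. These two weights do not coincide, so your iteration does not compose. The paper avoids this by establishing the base estimate (\ref{keyy}) for a two-parameter family of weights $(\log r)^{\sigma_2}r^{\sigma_1}$ with arbitrary integers $\sigma_1,\sigma_2$, and by iterating in a ``tilted'' way: the left-hand side of (\ref{keyy}) already contains \emph{all} $D^\alpha u$ with $|\alpha|\le 2$ (your base estimate omits $D^2u$), and one applies (\ref{keyy}) successively to $D^{\alpha_1}u,\ D^{\alpha_1+\alpha_2}u,\dots$, shifting $\sigma_1\mapsto\sigma_1+|\alpha_j|$ and $\sigma_2\mapsto\sigma_2-1$ at each stage so the weights line up exactly, and accumulating $\tau^{(3-2|\alpha_1|)/2}\cdots\tau^{(3-2|\alpha_m|)/2}=\tau^{(3m-2|\alpha|)/2}$ directly for every multi-index $|\alpha|\le 2m-1$ with no need for a separate interior-regularity step.

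Your proposed workaround — weighted Calder\'on--Zygmund estimates on annuli after the change of variable $t=\log r$ — would have to be applied to $v=e^{-\tau\phi}(\log r)^{-k}u$, and differentiating the Carleman weight produces terms of size $\tau^{|\gamma|}e^{-\tau\phi}r^{-|\gamma|}D^{\beta-\gamma}u$; for $|\gamma|=2$ these carry $\tau^2$, which is a \emph{higher} power than the $\tau^{3/2}$ in front of $u$ in your base estimate, not a lower one. These terms can in principle still be absorbed after comparison against the right-hand side of the Carleman estimate, but your claim that all commutator errors ``carry strictly lower powers of $\tau$'' is incorrect as stated, and the bookkeeping is far from automatic. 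The missing ingredient is precisely the paper's Proposition~\ref{hicarl}: state and prove the one-step Carleman estimate for arbitrary $\sigma_1,\sigma_2\in\mathbb{Z}$ with the full set of $D^\alpha u$, $|\alpha|\le 2$, on the left. With that in hand the iteration is mechanical and your Step~3 goes through unchanged.
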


We provide the proof of Theorem \ref{CCarlpqp} in the next section.
We are going to  use Theorem \ref{CCarlpqp} to establish the
following Carleman estimates for higher order elliptic equations of
the form \eqref{goal}. For an appropriate choice of $p$ and
sufficiently large $\tau$, from Theorem \ref{CCarlpqp}, we replace
the higher order elliptic operator with a higher order elliptic
operator with potential functions $V_0$ and coefficient functions
$V_\alpha$ using H\"older's inequality and the triangle inequality.

\begin{theorem}
I): In the case of $n> 4m-2$, assume that $s\in (\frac{2n}{3m}, \
\infty]$, there exist constants $C_0$, $C_1$, and sufficiently small
$R_0 < 1$ such that for any $u\in C^{\infty}_{0}(\mathbb
B_{R_0}(x_0)\setminus \set{x_0})$ and large positive constant
$$  \tau>C_0(1+\sum_{|\alpha|=1}^{\alpha_0}\|V_\alpha\|_{L^\infty}^\mu+\|V_0\|_{L^s}^\nu),        $$
one has
\begin{align}
&\tau^{\be_0} \|e^{-\tau \phi(r)}(\log r)^{-m} u\|_{L^p(r^{-n}dx)} +
\sum^{2m-1}_{|\alpha|=1} \tau^{\be_\alpha} \|e^{-\tau \phi(r)}(\log
r )^{-m} r^{|\alpha|}
 D^\alpha u\|_{L^2(r^{-n}dx)}
\nonumber \medskip\\
&\leq  C_1 \| e^{-\tau \phi(r)} r^{2m} (\triangle^m u+
\sum_{|\alpha|=1}^{\alpha_0} V_\alpha D^\alpha u+ V_0
u)\|_{L^2(r^{-n} dx)}, \label{main}
\end{align}
where
$$ \mu=\frac{2}{3m-2|\alpha|} \quad \mbox{and} \quad \nu=\frac{2s}{3ms-2n},          $$
 and $\be_0, \beta_\alpha$ and $p$ as defined in Theorem \ref{CCarlpqp}.

II:) In the case $n= 4m-2$, assume that $s\in (\frac{4(2m-1)}{3m},
\, \infty]$, there exist constants $C_0$, $C_1$, and sufficiently
small $R_0 < 1$ such that for any $u\in C^{\infty}_{0}(\mathbb
B_{R_0}(x_0)\setminus \set{x_0})$ and large positive constant
$$  \tau>C_0(1+\sum_{|\alpha|=1}^{\alpha_0}\|V_\alpha\|_{L^\infty}^\mu+\|V_0\|_{L^s}^{\tilde{\nu}}),        $$
one has
\begin{align}
&\tau^{\tilde{\be}_0} \|e^{-\tau \phi(r)}(\log r)^{-m} u\|_{L^p} +
\sum^{2m-1}_{|\alpha|=1} \tau^{\be_\alpha} \|e^{-\tau \phi(r)}(\log
r )^{-m} r^{|\alpha|}
 D^\alpha u\|_{L^2(r^{-n}dx)}
\nonumber \medskip\\
&\leq  C_1 \| e^{-\tau \phi(r)} r^{2m} (\triangle^m u+
\sum_{|\alpha|=1}^{\alpha_0} V_\alpha D^\alpha u+ V_0
u)\|_{L^2(r^{-n} dx)}, \label{main1}
\end{align}
where
$$ \mu=\frac{2}{3m-2|\alpha|} \quad \mbox{and} \quad \tilde{\nu}= \frac{2s}{3ms-4(2m-1)-2(2m-1)(s-2)\epsilon},         $$
 and $\tilde{\be}_0$, $\beta_\alpha$ and $p$ as defined in Theorem
\ref{CCarlpqp}.

III:)  In the case $2\leq n<4m-2$, assume that $s\in (\frac{4(2m-1)}{3m},
\, \infty]$, there exist constants $C_0$, $C_1$, and sufficiently
small $R_0 < 1$ such that for any $u\in C^{\infty}_{0}(\mathbb
B_{R_0}(x_0)\setminus \set{x_0})$ and large positive constant
$$  \tau>C_0(1+\sum_{|\alpha|=1}^{\alpha_0}\|V_\alpha\|_{L^\infty}^\mu+\|V_0\|_{L^s}^{\bar{\nu}}),        $$
one has
\begin{align}
&\tau^{\bar{\be}_0} \|e^{-\tau \phi(r)}(\log r)^{-m} u\|_{L^p} +
\sum^{2m-1}_{|\alpha|=1} \tau^{\be_\alpha} \|e^{-\tau \phi(r)}(\log
r )^{-m} r^{|\alpha|}
 D^\alpha u\|_{L^2(r^{-n}dx)}
\nonumber \medskip\\
&\leq  C_1 \| e^{-\tau \phi(r)} r^{2m} (\triangle^m u+
\sum_{|\alpha|=1}^{\alpha_0} V_\alpha D^\alpha u+ V_0
u)\|_{L^2(r^{-n} dx)}, \label{main3}
\end{align}
where
$$ \mu=\frac{2}{3m-2|\alpha|} \quad \mbox{and} \quad \bar{\nu}= \frac{2s}{3ms-4(2m-1)},         $$
 and $\bar{\be}_0$, $\beta_\alpha$ and $p$ as defined in Theorem
\ref{CCarlpqp}.
\label{CarlpqVW}
\end{theorem}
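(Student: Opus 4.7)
The plan is to derive Theorem~\ref{CarlpqVW} from the pure polyharmonic Carleman estimate (Theorem~\ref{CCarlpqp}) by treating the lower order terms $\sum V_\alpha D^\alpha u + V_0 u$ as perturbations. After applying the triangle inequality, the key is to absorb the perturbation contributions into the left-hand side via the $\tau$-threshold. I would handle all three cases (I, II, III) uniformly, with the case distinction entering only at the comparison step.

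First I would apply Theorem~\ref{CCarlpqp} to $u \in C^\infty_0(\mathbb{B}_{R_0}(x_0)\setminus\{x_0\})$ and then write $\triangle^m u = Lu - \sum_\alpha V_\alpha D^\alpha u - V_0 u$, where $Lu$ is the full higher-order operator. By the triangle inequality the right-hand side of \eqref{mainth} (or \eqref{mainth2}, \eqref{mainth3}) is bounded by $\|e^{-\tau\phi} r^{2m} Lu\|_{L^2(r^{-n}dx)} + \sum_\alpha I_\alpha + I_0$, where $I_\alpha$ and $I_0$ denote the corresponding weighted $L^2$ norms of the perturbations. For each $I_\alpha$ with $V_\alpha \in L^\infty$ I pull out $\|V_\alpha\|_{L^\infty}$ and rewrite the remaining weight as $r^{2m-|\alpha|}(\log r)^{m} \cdot (\log r)^{-m} r^{|\alpha|}$. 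Since $|\alpha| \le 2m-1$, the prefactor $r^{2m-|\alpha|}(\log r)^{m}$ is bounded on $(0,R_0)$, yielding $I_\alpha \le C\|V_\alpha\|_{L^\infty}\,\|e^{-\tau\phi}(\log r)^{-m} r^{|\alpha|} D^\alpha u\|_{L^2(r^{-n}dx)}$, which is absorbed into the $\tau^{\beta_\alpha}$-term on the left whenever $\tau \gtrsim \|V_\alpha\|_{L^\infty}^{1/\beta_\alpha} = \|V_\alpha\|_{L^\infty}^{\mu}$, matching the claimed $\mu=2/(3m-2|\alpha|)$.

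For $I_0$ I would use H\"older's inequality with exponents $s/2$ and $s/(s-2)$, writing
\[ I_0^2 = \int |V_0|^2\, e^{-2\tau\phi} r^{4m-n} |u|^2\,dx \le \|V_0\|_{L^s}^2\,\|e^{-\tau\phi} r^{(4m-n)/2} u\|_{L^p}^2, \]
where $p=2s/(s-2)$. A direct check shows that the hypothesis $s>\tfrac{2n}{3m}$ in Case I (resp.\ $s>\tfrac{4(2m-1)}{3m}$ in Cases II, III) places this $p$ inside the admissible range of Theorem~\ref{CCarlpqp}. The next step is to compare $\|e^{-\tau\phi} r^{(4m-n)/2} u\|_{L^p}$ with the $L^p$-norm appearing on the left-hand side of Theorem~\ref{CCarlpqp}. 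In Case I the LHS norm is weighted by $r^{-n}$, so the pointwise ratio of integrands is $r^{(4m-n)p/2+n}(\log r)^{mp}$, which is bounded on $(0,R_0)$ precisely under the condition $p \le 2n/(n-4m+2)$. In Case III (unweighted LHS $L^p$) one instead uses $r^{(4m-n)/2}\le C(\log r)^{-m}$ on $(0,R_0)$ directly. In Case II ($n=4m-2$) the comparison is critical, and accepting a small $\eps$-loss in the exponent of $r$ versus $\log r$ (through $r^{\eps}|\log r|^{m}\le C$) explains both the $(1-\eps)$ factor in $\tilde\beta_0$ and the $2(2m-1)(s-2)\eps$ correction in $\tilde\nu$. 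The resulting inequality $I_0 \le C\|V_0\|_{L^s}\cdot(\text{LHS $L^p$ norm})$ is then absorbed into the $\tau^{\beta_0}$-term when $\tau \gtrsim \|V_0\|_{L^s}^{1/\beta_0}$, and a short algebraic computation gives $1/\beta_0 = 2s/(3ms-2n)=\nu$ in Case I, with the analogous identities yielding $\tilde\nu$ and $\bar\nu$.

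Taking $\tau \ge C_0\bigl(1+\sum \|V_\alpha\|_{L^\infty}^{\mu}+\|V_0\|_{L^s}^{\nu}\bigr)$ then absorbs every perturbation term into the left, producing \eqref{main} (or \eqref{main1}, \eqref{main3}). The main obstacle is the step-4 comparison: one must verify in each of the three geometric regimes that $p=2s/(s-2)$ falls within the admissible $p$-range of Theorem~\ref{CCarlpqp}, that the associated pointwise weight comparison actually holds on $(0,R_0)$, and, at the endpoint $n=4m-2$, that the $\eps$-loss is carried correctly through the algebra so that $\tilde{\beta}_0$ and $\tilde{\nu}$ come out as stated. Apart from this careful bookkeeping, the argument is a standard perturbation of a known Carleman estimate.
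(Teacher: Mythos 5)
Your proposal is correct and follows essentially the same route as the paper: apply Theorem~\ref{CCarlpqp} to $u$, write $\triangle^m u$ as the full operator minus the lower-order terms, use the triangle inequality, treat $I_\alpha$ by pulling out $\|V_\alpha\|_{L^\infty}$ and the bounded weight $r^{2m-|\alpha|}(\log r)^m$, treat $I_0$ by H\"older with $p = 2s/(s-2)$, check $p$ lies in the admissible range, and absorb both classes of terms by requiring $\tau^{\beta_\alpha}\gtrsim \|V_\alpha\|_{L^\infty}$ and $\tau^{\beta_0}\gtrsim\|V_0\|_{L^s}$, yielding $\mu = 1/\beta_\alpha$ and $\nu = 1/\beta_0$ (and analogously for the other cases).

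One small but noteworthy inaccuracy: in Case II you attribute the $\eps$-loss to the weight comparison $r$ versus $\log r$ in the absorption step. That comparison is actually unproblematic here, since for $n = 4m-2$ one has $2m - n/2 = 1 > 0$, so $r\,(\log r)^m$ is bounded on $(0,R_0)$ with no $\eps$ needed. The $\eps$ in $\tilde\beta_0$ and hence in $\tilde\nu$ is already present in Theorem~\ref{CCarlpqp} (Case II); it stems from the critical Sobolev embedding $W^{2m-1,2}\hookrightarrow L^{p'}$, $p'<\infty$, in the proof of that theorem, not from the absorption step in Theorem~\ref{CarlpqVW}. Since you are taking Theorem~\ref{CCarlpqp} as given and the exponent $\tilde\nu = 1/\tilde\beta_0$ comes out the same either way, your final result is unaffected; I point it out because introducing an additional $\eps$-loss in the absorption step would needlessly weaken the estimate and obscures where the critical behaviour actually lives.
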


\begin{proof}
Case I): We first consider the case $n>4m-2$. With the aid of
Theorem \ref{CCarlpqp} and the triangle inequality, we see that
\begin{align}
\indent &\tau^{\be_0} \|e^{-\tau \phi(r)}(\log r)^{-m}
u\|_{L^p(r^{-n}dx)} + \sum^{2m-1}_{|\alpha|=1} \tau^{\be_\alpha}
\|e^{-\tau \phi(r)} (\log r )^{-m} r^{|\alpha|}
 D^\alpha u\|_{L^2(r^{-n}dx)}
\nonumber \medskip\\
&\leq  C \| e^{-\tau \phi(r)} r^{2m} \triangle^m u\|_{L^2(r^{-n}
dx)} \nonumber \\
&\leq  C \| e^{-\tau \phi(r)} r^{2m} (\triangle^m u+
\sum_{|\alpha|=1}^{\alpha_0} V_\alpha D^\alpha u+ V_0
u)\|_{L^2(r^{-n} dx)}\nonumber \\&+\sum_{|\alpha|=1}^{\alpha_0}
C\|e^{-\tau
\phi(r)} r^{2m} V_\alpha D^\alpha u\|_{L^2(r^{-n} dx)} \nonumber \\
& +C\|e^{-\tau \phi(r)} r^{2m} V_0  u\|_{L^2(r^{-n} dx)}.
\label{ineqq}
\end{align}
 Now
we estimate the last two terms in the right hand side of
(\ref{ineqq}). Set $p=\frac{2s}{s-2}$. Note that $m\geq 2$. By the
assumption that  $n>4m-2$, we obtain $s>\frac{2n}{3m}>2$. Thus, $p$
is a positive constant. Since $\frac{n}{2m-1}\leq
\frac{2n}{3m}<s\leq \infty$, then $2\leq p<\frac{2n}{n-3m}\leq
\frac{2n}{n-4m+2}$, which in the range of $p$ in Theorem
\ref{CCarlpqp}.  Following from H\"older's inequality, we obtain
that
\begin{align}
& \| e^{-\tau\phi(r)} r^{2m} V_0 u\|_{L^2(r^{-n} dx)} \nonumber \medskip \\
&\le  \|V_0\|_{L^{s}\pr{B_{R_0}}} \|e^{-\tau\phi(r)} r^{2m +
\frac n p-\frac n 2} u\|_{L^{p}(r^{-n}dx)} \nonumber \\
&\le \|V_0\|_{L^{s}\pr{B_{R_0}}} \|e^{-\tau\phi(r)}\pr{\log
r}^{-m}u\|_{L^p(r^{-n} dx)}\| (\log r)^m r^{2m+
\frac n p-\frac n 2}\|_{L^{\iny}\pr{B_{R_0}}} \nonumber \\
&\le C \|V_0\|_{L^{s}\pr{B_{R_0}}} \| e^{-\tau \phi(r)} (\log
r)^{-m} u\|_{L^p(r^{-n} dx)}, \label{hoddd}
\end{align}
where we have used the fact that $2m + \frac n p-\frac n 2>0$ and
$R_0$ is small.
 Furthermore, using H\"older's
inequality,
\begin{align}
& \sum_{|\alpha|=1}^{\alpha_0} \| e^{-\tau\phi(r)} r^{2m} V_\alpha
D^\alpha u\|_{L^2(r^{-n} dx)} \nonumber \\ &\leq
\sum_{|\alpha|=1}^{\alpha_0} \|V_\alpha\|_{L^\infty}\|
r^{2m-|\alpha|} (\log r)^m\|_{L^\infty} \| e^{-\tau\phi(r)}(\log r)^{-m} r^{|\alpha|} D^\alpha u\|_{L^2(r^{-n}dx)}  \nonumber \\
&\leq \sum_{|\alpha|=1}^{\alpha_0} \|V_\alpha\|_{L^\infty}\|
e^{-\tau\phi(r)}(\log r)^{-m} r^{|\alpha|} D^\alpha
u\|_{L^2(r^{-n}dx)}. \label{hodd}
\end{align}
In order to absorb the last two terms in the right hand side of
(\ref{ineqq}) into the the left hand side, from (\ref{hodd}) and
(\ref{hoddd}), we choose
\begin{equation}
\tau^{\beta_\alpha}\geq c\|V_\alpha\|_{L^\infty} \quad \mbox{and}
\quad \tau^{\beta_0}\geq c\|V_0\|_{L^s}. \label{asso}
\end{equation}
From the assumption of $\alpha_0$, we know
$\beta_\alpha=\frac{3m-2|\alpha|}{2}>0$. We can check that $\be_0
=\frac{3mp-n(p-2)}{2p}=\frac{3ms-2n}{2s}$. Because $s>
\frac{2n}{3m}$, we see that $\beta_0>0$.

 Therefore, to reach (\ref{asso}),  we need to choose
$$  \tau>C(1+\sum_{|\alpha|=1}^{\alpha_0}\|V_\alpha\|_{L^\infty}^\mu+\|V_0\|_{L^p}^\nu)        $$
where
$$ \mu=\frac{2}{3m-2|\alpha|} \quad \mbox{and} \quad \nu=\frac{2s}{3ms-2n}.           $$

Case II): Now we turn to the case of $n= 4m-2$. Carrying out the
similar arguments as (\ref{ineqq}), we obtain
\begin{align}
\indent &\tau^{\tilde{\be}_0} \|e^{-\tau \phi(r)}(\log r)^{-m}
u\|_{L^p} + \sum^{2m-1}_{|\alpha|=0} \tau^{\be_\alpha} \|e^{-\tau
\phi(r)} (\log r )^{-m} r^{|\alpha|}
 D^\alpha u\|_{L^2(r^{-n}dx)}
\nonumber \medskip\\
&\leq  C \| e^{-\tau \phi(r)} r^{2m} (\triangle^m u+
\sum_{|\alpha|=1}^{\alpha_0} V_\alpha D^\alpha u+ V_0
u)\|_{L^2(r^{-n} dx)}\nonumber \\&+\sum_{|\alpha|=1}^{\alpha_0}C
\|e^{-\tau
\phi(r)} r^{2m} V_\alpha D^\alpha u\|_{L^2(r^{-n} dx)} \nonumber \\
& +C\|e^{-\tau \phi(r)} r^{2m} V_0  u\|_{L^2(r^{-n} dx)}.
\label{ineqqq}
\end{align}
Set $p=\frac{2s}{s-2}$ again. Since $s>\frac{4(2m-1)}{3m}$, we check
that $s>2$. It follows from H\"older's inequality that
\begin{align}
& \| e^{-\tau\phi(r)} r^{2m} V_0 u\|_{L^2(r^{-n} dx)} \nonumber \medskip \\
&\le  \|V_0\|_{L^{s}\pr{B_{R_0}}} \|e^{-\tau\phi(r)} r^{2m -\frac n 2} u\|_{L^{p}} \nonumber \\
&\le \|V_0\|_{L^{s}\pr{B_{R_0}}} \|e^{-\tau\phi(r)}\pr{\log
r}^{-m}u\|_{L^p}\| (\log r)^m r^{2m-\frac n 2}\|_{L^{\iny}\pr{B_{R_0}}} \nonumber \\
&\le C \|V_0\|_{L^{s}\pr{B_{R_0}}} \| e^{-\tau \phi(r)} (\log
r)^{-m} u\|_{L^p},\label{lowdim}
\end{align}
where we have considered that $2m-\frac{n}{2}>0$. For the terms
involving higher order derivatives, we carry out the the same
argument as (\ref{hodd}). We can also absorb the last two terms in
the right hand side of (\ref{ineqqq}) into the left hand side.
Together with (\ref{lowdim}) and (\ref{hodd}), we choose
\begin{equation}
\tau^{\beta_\alpha}\geq c\|V_\alpha\|_{L^\infty} \quad \mbox{and}
\quad \tau^{\tilde{\beta}_0}\geq c\|V_0\|_{L^s}.\label{mme}
\end{equation}
Since $s>\frac{4(2m-1)}{3m}$, we can check that
$$\tilde{\beta}_0=\frac{2(2m-1)}{p}(1-\epsilon)-\frac{m-2}{2}=\frac{3ms-4(2m-1)-2(2m-1)(s-2)\eps}{2s}>0$$
by choosing $\epsilon$ sufficiently small.

To satisfy (\ref{mme}), we choose
$$  \tau>C(1+\sum_{|\alpha|=1}^{\alpha_0}\|V_\alpha\|_{L^\infty}^\mu+\|V_0\|_{L^p}^{\tilde{\nu}})        $$
with
$$ \mu=\frac{2}{3m-2|\alpha|} \quad \mbox{and} \quad \tilde{\nu}= \frac{2s}{3ms-4(2m-1)-2(2m-1)(s-2)\epsilon},         $$
where $0<\epsilon<1$ is sufficiently small.

Case III): At last, we deal with the case $2\leq n<4m-2$. Similar to the argument in Case II,
by
triangle inequality and H\"older's inequality, it follows from
Theorem \ref{CCarlpqp} that
\begin{align}
\indent &\tau^{\bar{\be}_0} \|e^{-\tau \phi(r)}(\log r)^{-m}
u\|_{L^p} + \sum^{2m-1}_{|\alpha|=0} \tau^{\be_\alpha} \|e^{-\tau
\phi(r)} (\log r )^{-m} r^{|\alpha|}
 D^\alpha u\|_{L^2(r^{-n}dx)}
\nonumber \medskip\\
&\leq  C \| e^{-\tau \phi(r)} r^{2m} (\triangle^m u+
\sum_{|\alpha|=1}^{\alpha_0} V_\alpha D^\alpha u+ V_0
u)\|_{L^2(r^{-n} dx)}\nonumber \\&+\sum_{|\alpha|=1}^{\alpha_0}C \|V_\alpha \|_{L^\infty}
\|e^{-\tau
\phi(r)} r^{2m} D^\alpha u\|_{L^2(r^{-n} dx)} \nonumber \\
& +C\|e^{-\tau \phi(r)} r^{2m} V_0  u\|_{L^2(r^{-n} dx)}
\label{ineqqqq}
\end{align}
and
\begin{equation}
\| e^{-\tau\phi(r)} r^{2m} V_0 u\|_{L^2(r^{-n} dx)}\leq C \|V_0\|_{L^{s}\pr{B_{R_0}}} \| e^{-\tau \phi(r)} (\log
r)^{-m} u\|_{L^p}\label{lowdimm}
\end{equation}
for $p=\frac{2s}{s-2}.$ We choose
\begin{equation}
\tau^{\beta_\alpha}\geq c\|V_\alpha\|_{L^\infty} \quad \mbox{and}
\quad \tau^{\bar {\beta}_0}\geq c\|V_0\|_{L^s}.\label{mmmme}
\end{equation}
since $s\in (\frac{4(2m-1)}{3m},
\, \infty]$, we can verify that
$$\bar{\beta}_0=\frac{3ms-4(2m-1)}{2s}>0.$$
To satisfy (\ref{mmmme}), we select
$$ \tau>C(1+\sum_{|\alpha|=1}^{\alpha_0}\|V_\alpha\|_{L^\infty}^\mu+\|V_0\|_{L^s}^{\bar\nu}),  $$
where $\mu=\frac{2}{3m-2|\alpha|}$ and $\bar \nu=\frac{2s}{3ms-4(2m-1)}$.
 Thus, the estimate (\ref{main3})
is achieved.

Together with the discussion in those three cases, we complete the
proof of Theorem \ref{CCarlpqp}.
\end{proof}

\section{Proof of Carleman estimates }
\label{CarlProofs}

In this section, we prove the crucial tool in the whole paper, i.e.
the $L^2- L^p$ Carleman estimate stated in Theorem \ref{CCarlpqp}.
To prove our Carleman estimate, we first establish a $L^2$ type
Carleman estimates for higher order elliptic operators.

We introduce polar coordinates in $\mathbb R^n\backslash \{0\}$ by
setting $x=r\omega$, with $r=|x|$ and
$\omega=(\omega_1,\cdots,\omega_n)\in S^{n-1}$. Further, we use a
new coordinate $t=\log r$. Then
$$ \frac{\partial }{\partial x_j}=e^{-t}(\omega_j\partial_t+  \Omega_j), \quad 1\leq j\leq n, $$
where $\Omega_j$ is a vector field in $S^{n-1}$. It is well known
that vector fields $\Omega_j$ satisfy
$$ \sum^{n}_{j=1}\omega_j\Omega_j=0 \quad \mbox{and} \quad
\sum^{n}_{j=1}\Omega_j\omega_j=n-1.$$

 The adjoint of $\Omega_j$ is an operator in $L(S^{n-1})$ given by
 $$ \Omega^\ast_j=(n-1)\omega_j-\Omega_j. $$
 It is known that
 $$\sum^{n}_{j=1}\Omega^\ast_j\Omega_j=-\triangle_\omega.  $$
We denote $\Omega^\alpha$ as the product of
$\Omega_1^{\alpha_1}\cdots \Omega_n^{\alpha_n}$, where
$\alpha=(\alpha_1, \cdots, \alpha_n)\in \mathbb N^n.$

 We are
interested in $\phi(r)\in C^\infty_0(B_{R_0}(x_0) \setminus
\set{x_0})$ for some small $R_0$. Since $r=e^t$, then $r\to 0$ if
and only if $t\to-\infty$. In terms of $t$, we consider the case
$-\infty<t<t_0 < 0$, where $|t_0|$ is chosen to be sufficiently
large. Since
$$ r^2\triangle u=r^2 \partial^2_r u+(n-1)r \partial_r u+\triangle_{\omega} u. $$
 In the new coordinate system, the Laplace operator takes
the form
\begin{equation}
e^{2t} \triangle u=\partial^2_t u+(n-2)\partial_t
u+\triangle_{\omega} u, \label{laplace}
\end{equation}
where $\disp \triangle_\omega=\sum_{j=1}^n \Omega^2_j$ is the
Laplace-Beltrami operator on $S^{n-1}$.

The idea of establishing the following Carleman estimates is
motivated by \cite{R97} and \cite{L07}. However, the test function
$\phi(r)=\frac{\tau}{2}(\log r)^2$ chosen in \cite{R97} and
\cite{L07} is not
 a log linear function.
 The log linearity of test functions is
essential in deriving the vanishing order in section 4. In the
following proposition, we choose  a log linear function
$\phi(r)=\log r+\log (\log r)^2$. More delicate analysis is devoted
to establishing the estimates.

\begin{proposition}
Given $\sigma_1\in \mathbb Z$ and $\sigma_2\in \mathbb Z$.
 Then there
exist positive constant $C$, large positive constant $\tau_0$, and
sufficiently small $R_0 < 1$ such that for any $u\in
C^{\infty}_{0}(\mathbb B_{R_0}(x_0)\setminus \set{x_0})$, one has
\begin{align}
\sum^{2m-1}_{|\alpha|=0} \tau^{\be_\alpha} \|e^{-\tau \phi(r)}(\log
r )^{\sigma_2-m} r^{\sigma_1+|\alpha|}  |D^\alpha
u|\|_{L^2(r^{-n}dx)}
\nonumber \medskip\\
\leq  C \| e^{-\tau \phi(r)} (\log r)^{\sigma_2} r^{\sigma_1+2m}
\triangle^m u\|_{L^2(r^{-n} dx)}  \label{mainCar}
\end{align}
for $\tau>\tau_0$, where $\be_\alpha = \frac{3m-2|\alpha|}{2}$.
 \label{hicarl}
\end{proposition}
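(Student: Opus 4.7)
The plan is to follow the two-step strategy outlined in the introduction: establish an $L^2$ Carleman inequality for the Laplacian $\triangle$ (the $m=1$ case of (\ref{mainCar})), and then iterate $m$ times to reach $\triangle^m$, filling in the intermediate-order derivatives by weighted elliptic regularity. I first pass to polar coordinates $x = r\omega$ and set $t = \log r$ to recast the estimate on $(-\infty, t_0) \times S^{n-1}$. By (\ref{laplace}), $r^2\triangle = \partial_t^2 + (n-2)\partial_t + \triangle_\omega$, so conjugating $r^{\sigma_1+2}\triangle$ by $e^{-\tau\phi}(\log r)^{\sigma_2} r^{(n-2)/2}$ reduces the $m=1$ estimate to controlling an operator $L_\tau$ on $L^2(dt\, d\omega)$. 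Writing $L_\tau = A_\tau + B_\tau$ with $A_\tau$ symmetric and $B_\tau$ antisymmetric gives
\[
\|L_\tau v\|^2 = \|A_\tau v\|^2 + \|B_\tau v\|^2 + \langle [A_\tau, B_\tau]\, v, v\rangle,
\]
so the base estimate reduces to a lower bound on the commutator term.

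For $\phi(r) = t + \log t^2$ one has $\dot\phi = 1 + 2/t$ and $\ddot\phi = -2/t^2$. Decomposing $v = \sum_k v_k(t) Y_k(\omega)$ into spherical harmonics with $-\triangle_\omega Y_k = \lambda_k Y_k$, each mode reduces to a one-dimensional problem whose principal commutator contributes a multiplicative term of order $\tau^3/t^2$ plus subprincipal terms involving $\tau\lambda_k/t^2$ and $\tau\,\partial_t^2/t^2$. Extracting square roots and summing over $k$ yields the $m=1$ base estimate
\begin{align*}
& \tau^{3/2}\|e^{-\tau\phi}(\log r)^{\sigma_2-1} r^{\sigma_1} u\|_{L^2(r^{-n}dx)} + \tau^{1/2}\sum_{|\alpha|=1}\|e^{-\tau\phi}(\log r)^{\sigma_2-1} r^{\sigma_1+1} D^\alpha u\|_{L^2(r^{-n}dx)} \\
& \le C\|e^{-\tau\phi}(\log r)^{\sigma_2} r^{\sigma_1+2}\triangle u\|_{L^2(r^{-n}dx)},
\end{align*}
with the $(\log r)^{-1}$ loss coming directly from the $1/t^2$ weight in the principal commutator.

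I then apply this base estimate successively to $\triangle^{m-j} u$ at step $j \in \{1, \ldots, m\}$ with shifted parameters $(\sigma_1 + 2(m-j),\, \sigma_2 - (j-1))$, so that its right-hand side coincides with the ``$u$-term'' of step $j-1$ divided by $\tau^{3/2}$. Composing the $m$ inequalities gives, for each $k = 0, \ldots, m-1$,
\begin{align*}
& \tau^{3(m-k)/2}\|e^{-\tau\phi}(\log r)^{\sigma_2-m} r^{\sigma_1+2k}\triangle^k u\|_{L^2(r^{-n}dx)} + \tau^{(3(m-k)-2)/2}\|e^{-\tau\phi}(\log r)^{\sigma_2-m} r^{\sigma_1+2k+1}|\nabla\triangle^k u|\|_{L^2(r^{-n}dx)} \\
& \le C\|e^{-\tau\phi}(\log r)^{\sigma_2} r^{\sigma_1+2m}\triangle^m u\|_{L^2(r^{-n}dx)}.
\end{align*}
Since $3(m-k)/2 \ge (3m-4k)/2 = \beta_{2k}$ and $(3(m-k)-2)/2 \ge \beta_{2k+1}$, these exponents dominate the target $\beta_\alpha$. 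For multi-indices $\alpha$ not of Laplacian-power form, I invoke standard weighted elliptic regularity for $\triangle$ on dyadic annuli $\{r \sim 2^{-j}\}$, on each of which the weight $e^{-\tau\phi}(\log r)^{\sigma_2-m} r^{\sigma_1}$ varies by only a bounded factor; summing the annular estimates converts the bounds on $\triangle^k u$ and $\nabla\triangle^k u$ into bounds on every $D^\alpha u$ with $|\alpha| \le 2m-1$.

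The hard part is the commutator computation in the base case. Unlike the convex choice $\phi = \tau(\log r)^2/2$ used in \cite{R97,L07}, for which $\phi''$ is bounded below and positivity of $[A_\tau, B_\tau]$ is essentially automatic, the log-linear weight here gives $\ddot\phi \sim 1/t^2$, so the positive contribution degenerates as $r \to 0$. Obtaining the sharp power $\tau^{3/2}$ on $u$ together with the simultaneous $\tau^{1/2}$ on $\nabla u$ requires a careful mode-by-mode matching of the degenerate $\tau^3/t^2$ principal term against the $\partial_t^2$, $\triangle_\omega$, and $\tau\ddot\phi$ subprincipal contributions after the spherical-harmonic decomposition; this is the ``more delicate analysis'' referred to before the statement. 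Log-linearity itself is non-negotiable because it is what makes the iteration shifts integer and what allows the three-ball inequalities of Section \ref{vanOrd} to produce the sharp vanishing-order estimates of Theorem \ref{thh}.
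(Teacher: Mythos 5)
Your overall skeleton — pass to polar coordinates, conjugate by the Carleman weight, obtain a base estimate for $\triangle$ from commutator positivity, and iterate $m$ times — does match the paper's strategy, and the subsidiary observations (log-linearity of $\phi$ being essential for the propagation-of-smallness step, degeneracy of $\ddot\phi\sim t^{-2}$ replacing the convexity of $\phi=\tau(\log r)^2/2$ in \cite{R97,L07}) are accurate. But the proof as written contains a genuine gap that the paper's actual argument is specifically designed to avoid.

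The gap is in the final step, where you propose to produce the intermediate derivatives $D^\alpha u$ with $|\alpha|$ not of Laplacian-power form from bounds on $\triangle^k u$ and $\nabla\triangle^k u$ via ``standard weighted elliptic regularity for $\triangle$ on dyadic annuli $\{r\sim 2^{-j}\}$,'' asserting that the weight $e^{-\tau\phi}(\log r)^{\sigma_2-m}r^{\sigma_1}$ varies by a bounded factor on each such annulus. This is false: since $\phi(r)=\log r + \log(\log r)^2$, one has $e^{-\tau\phi}\sim r^{-\tau}(\log r)^{-2\tau}$, so across a dyadic annulus the factor $e^{-\tau\phi}$ changes by roughly $2^{\tau}$, which is not bounded uniformly in $\tau$. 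Any Caccioppoli-type argument that passes the weight through the annular cutoff therefore loses an exponential factor in $\tau$, which destroys the polynomial bookkeeping $\tau^{\beta_\alpha}$ that the proposition requires. One could instead use annuli of $t$-width $O(1/\tau)$ (on which $e^{-\tau\phi}$ really is bounded) and check that the resulting Caccioppoli constants $\delta^{-4}\sim\tau^4 r^{-4}$ are absorbed by the known powers of $\tau$ from the iterated estimate, but that is a different and more delicate argument than what you have written, and would need to be set out carefully.

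The paper's proof sidesteps this difficulty entirely by proving a \emph{stronger} base estimate (\ref{keyy}) for the single Laplacian, controlling all derivatives $D^\alpha u$ with $|\alpha|\le 2$ (including the $|\alpha|=2$ terms with the negative power $\tau^{-1/2}$), so that the iteration (\ref{itera})--(\ref{explain}) directly delivers every mixed derivative $D^{\alpha_1+\cdots+\alpha_m}u$ with $|\alpha_i|\le 2$ and no auxiliary elliptic-regularity step is needed. The extra control at order $2$ comes from the additional quantity $J=\iint t^{-2}(|\triangle_\tau v|^2+|\triangle_\tau^- v|^2)$, combined with the commutator-type quantity $I=\iint(|\triangle_\tau v|^2-|\triangle_\tau^- v|^2)$ in the weighted combination $\tau I+J\le(\tau+1)\|\triangle_\tau v\|^2$; the paper then does not decompose into spherical harmonics but integrates by parts directly using $\triangle_\omega=\sum_j\Omega_j^2$. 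Your symmetric/antisymmetric split with $\|L_\tau v\|^2=\|A_\tau v\|^2+\|B_\tau v\|^2+\langle[A_\tau,B_\tau]v,v\rangle$ is morally the same as the paper's $I$ (indeed $I=2\langle[A_\tau,B_\tau]v,v\rangle$), but it does not automatically give the $t^{-2}$-weighted control of the second-order derivatives that the $J$ term supplies. If you strengthen your $m=1$ estimate to include the $|\alpha|=2$ terms, the iteration closes cleanly and the problematic elliptic-regularity step disappears.
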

\begin{proof}
Our strategy is to prove a $L^2$ type Carleman estimates for second
order elliptic operator. Then we can perform an interative process
to get the Carleman estimates for higher order elliptic operators.
First, we derive the following Carleman estimates involving terms
for every order derivative with weights,
\begin{equation}
\sum_{|\alpha|\leq 2}
\tau^{\frac{3-2|\alpha|}{2}}\|e^{-\tau\phi(r)}(\log r)^{\sigma_2-1}
r^{\sigma_1+|\alpha|}D^\alpha u \|_{L^2(r^{-n}dx)}\leq
\|e^{-\tau\phi(r)}(\log r)^{\sigma_2} r^{\sigma_1+2} \triangle u
\|_{L^2(r^{-n}dx)}. \label{keyy}
\end{equation}

By the polar coordinates, the right hand side of (\ref{keyy}) can be
written as
\begin{align*}
\iint e^{-2\tau \phi(r)} r^{2\sigma_1+4-n}(\log
r)^{2\sigma_2}|\triangle u|^2 dx &=\iint e^{-2\tau (t+\log t^2)}
e^{2\sigma_1t+4t}t^{2\sigma_2}|\triangle u|^2 \, dt d \omega
\\
&=\iint |e^{-\tau (t+\log t^2)}
e^{\sigma_1t+2t}t^{\sigma_2}\triangle u|^2 \, dt d \omega.
\end{align*}
Let $$ u=e^{\tau (t+\log t^2)} e^{-\sigma_1 t} t^{-\sigma_2} v.$$

   Direct calculations show that
\begin{align}
e^{-\tau (t+\ln t^2)} e^{\sigma_1t+2t}t^{\sigma_2}\triangle u &=
[\partial_t+(\tau+2\tau t^{-1})]^2
v+(n-2)\big(\partial_t+(\tau+2\tau t^{-1})\big) v \nonumber
\\&+\triangle_\omega v+ a\partial_t v+bv,
\end{align}
where $a=-2\sigma_1-\frac{2\sigma_2}{t}$ and
$$ b=-(n-2)(\sigma_1+\frac{\sigma_2}{t})+\frac{\sigma_2}{t^2}+(\sigma_1-\frac{\sigma_2}{t})^2
-2(\tau+\frac{2\tau}{t})(\sigma_1+\frac{\sigma_2}{t}).   $$ Note
that \begin{equation} a'(t)=O(t^{-2}),\quad \mbox{and}\quad
b'(t)=O({\tau t^{-2}}).
\end{equation}
 Next we
define a new operator by
$$\triangle_\tau v= [\partial_t+(\tau+2\tau t^{-1})]^2
v+\big((n-2)+a\big)\partial_t v +[(n-2)(\tau+2\tau t^{-1})+b\big]
v+\triangle_\omega v.
$$  To show (\ref{keyy}), it is equivalent to obtain that
\begin{equation}
\sum_{j+|\alpha|\leq 2} \tau^{3-2(j+|\alpha|)}\iint t^{-2}
|\partial_t^j \Omega^\alpha v|^2\,dt d\omega \leq C \iint
|\triangle_\tau v|^2 \,dt d\omega.
\end{equation}

Let $\triangle^-_\tau v $ be the operator obtained from
$\triangle_\tau v$ by replacing $\partial_t$ with $-\partial_t$,
i.e.
$$\triangle_\tau^- v= [\partial_t-(\tau+2\tau t^{-1})]^2
v-\big((n-2)+a\big)\partial_t v+[(n-2)(\tau+2\tau t^{-1})+b\big]
v+\triangle_\omega v.$$ Note that
\begin{align}\triangle_\tau v=
\partial_t^2 v&+ 2(\tau+2\tau t^{-1})\partial_t v- 2t^{-2}\tau
v+(\tau+2\tau
t^{-1})^2v +\big((n-2)+a\big)\partial_t v\nonumber \\
&+[(n-2)(\tau+2\tau t^{-1})+b\big] v+\triangle_\omega v.
\label{starr1}
\end{align}
Similar calculations show that
 \begin{align} \triangle_\tau^- v= \partial_t^2 v&-2(\tau+2\tau t^{-1})\partial_t v+2t^{-2}\tau v+(\tau+2\tau t^{-1})^2v
-\big((n-2)+a\big)\partial_t v \nonumber \\ &+[(n-2)(\tau+2\tau
t^{-1})+b\big]v+\triangle_\omega v. \label{starr2}
\end{align}
On one hand, we compute the integration of the difference of
$\triangle_\tau v$ and $\triangle^-_\tau v$. Define
\begin{equation}
I:=\iint |\triangle_\tau v|^2  \, dtd\omega -\iint |\triangle_\tau^-
v|^2  \, dtd\omega.
\end{equation}
It follows from (\ref{starr1}) and (\ref{starr2}) that
\begin{align}
I=&4\langle\partial_t^2 v+(\tau+2\tau t^{-1})^2 v+[(n-2)(\tau+2\tau
t^{-1})+b\big]v+\triangle_\omega v, \quad 2(\tau+2\tau
t^{-1})\partial_t v \nonumber \\&-2 t^{-2}\tau
v+\big((n-2)+a\big)\partial_t v\rangle, \label{inne}
\end{align}
where $\langle \ ,\rangle$ is denoted as an inner product in $L^2$ space in
$(-\infty, \ \infty)\times S^{n-1}$. We compute each term in the
inner product in the last identity. Integration by parts shows that
\begin{align}
4\langle\partial_t^2 v, \quad 2(\tau+2\tau t^{-1})\partial_tv\rangle &=4\iint
(\tau+2\tau t^{-1})\partial_t(\partial_t v)^2 dtd\omega \nonumber \\
& =8\tau \iint t^{-2} |\partial_t v|^2 \,dtd\omega . \label{start}
\end{align}
Using integration by parts twice, we have
\begin{align}
4\langle\partial_t^2 v, \quad -2t^{-2}\tau v\rangle &=-16\tau \iint
t^{-3}\partial_t v v  \,dtd\omega+8\tau  \iint t^{-2}(\partial_t
v)^2 \, dtd\omega
 \nonumber \\
&= -24 \tau \iint t^{-4} |v|^2  \,dtd\omega+8\tau \iint t^{-2}
|\partial_t v|^2 \,dtd\omega.
\end{align}
Integration by parts indicates that
\begin{align}
4\langle\partial^2_t v, \quad \big((n-2)+a\big)\partial_t v\rangle&=2\iint
\big((n-2)+a\big)
\partial_t|\partial_t v|^2\,dt d\omega \nonumber \\&=-2\iint a'|\partial_t v|^2\,dt d\omega.
\end{align}
Continuing the computation in (\ref{inne}) gives that
\begin{align}
4\langle(\tau+2\tau t^{-1})^2 v, \quad 2(\tau+2\tau t^{-1})\partial_t
v\rangle&=4\iint (\tau+2\tau t^{-1})^3 \partial_t v^2\,dt d\omega \nonumber \\
&=24\tau \iint (\tau+2\tau t^{-1})^2 t^{-2} v^2 \,dt d\omega.
\end{align}
It is clear that
\begin{align}
4\langle(\tau+2\tau t^{-1})^2 v, \quad -2 t^{-2}\tau v\rangle=-8\tau\iint
(\tau+2\tau t^{-1})^2 t^{-2} v^2\,dt d\omega.
\end{align}
From integrations by parts, we get
\begin{align}
4\langle(\tau+2\tau t^{-1})^2 v, \quad \big((n-2)+a\big)
\partial_tv\rangle&=2 \iint
\big((n-2)+a\big)(\tau+2\tau t^{-1})^2\partial_t v^2 \,dt d\omega \nonumber\\
&=8\tau \iint \big((n-2)+a\big)(\tau+2\tau t^{-1})t^{-2} v^2
\,dt d\omega \nonumber\\
&-2\iint a'(\tau+2\tau t^{-1})^2 v^2 \,dt d\omega.
\end{align}
Similar argument yields that
\begin{align}
&4\langle[(n-2)(\tau+2\tau t^{-1})+b] v, \quad 2(\tau+2\tau
t^{-1})\partial_t v\rangle \nonumber \\
&= 4 \iint [(n-2)(\tau+2\tau t^{-1})+b](\tau+2\tau
t^{-1})\partial_t v^2\,dt d\omega \nonumber \\
&=-4\iint [-2(n-2)\tau t^{-2}+b'](\tau+2\tau t^{-1})v^2 \,dt
d\omega\nonumber
\\&+8\tau \iint [(n-2)(\tau+2\tau t^{-1})+b] t^{-2} v^2 \,dt d\omega.
\end{align}
It is obvious that
\begin{align}
&4\langle[(n-2)(\tau+2\tau t^{-1})+b] v, \quad -2t^{-2}\tau v\rangle \nonumber
\\&= -8\tau \iint [(n-2)(\tau+2\tau t^{-1})+b]t^{-2} v^2\,dt d\omega.
\end{align}
It follows from integration by parts that
\begin{align}
&4\langle[(n-2)(\tau+2\tau t^{-1})+b] v, \quad \big((n-2)+a\big)\partial_t
v\rangle \nonumber \\
&=2\iint [(n-2)(\tau+2\tau t^{-1})+b]\big((n-2)+a\big)\partial_t v^2
\,dt d\omega \nonumber \\
&=-2\iint \big(-2(n-2)t^{-2}\tau +b'\big)\big((n-2)+a\big) v^2\,dt
d\omega \nonumber \\&-2 \iint[(n-2)(\tau+2\tau t^{-1})+b]a' v^2\,dt
d\omega.
\end{align}
Since $\triangle_\omega v=\sum_{j=1}^n\Omega_j^2 v$, integration by
parts leads to
\begin{align}
4\langle\triangle_\omega v, \quad 2(\tau+2\tau t^{-1}) \partial_t v\rangle&
=-4\sum_{j=1}^n \iint (\tau+2\tau t^{-1})\partial_t |\Omega_j
v|^2\,dt d\omega
\nonumber \\
&=-8\tau \sum_{j=1}^n \iint t^{-2} |\Omega_j v|^2\,dt d\omega.
\end{align}
It follows that
\begin{align}
4\langle\triangle_\omega v, \quad  -2 \tau t^{-2} v\rangle&=8\tau \sum_{j=1}^n
\iint t^{-2} |\Omega_j v|^2\,dt d\omega.
\end{align}
It is true from integration by parts that
\begin{align}
4\langle\triangle_\omega v, \quad \big((n-2)+a\big)\partial_t v\rangle&= -4
\sum_{j=1}^n \iint \big((n-2)+a\big) \Omega_j v \Omega_j \partial_t
v\,dt d\omega \nonumber \\ &=2\sum_{j=1}^n\iint a' |\Omega_j
v|^2\,dt d\omega. \label{final}
\end{align}
Taking into account of the equalities from (\ref{start}) to
(\ref{final}) gives that
\begin{align}
I\geq \iint {15\tau^3}{t^{-2}} v^2+\tau^2 O({t^{-2}})\,dt d\omega+
15\tau \iint t^{-2} |\partial_t v|^2 \,dt d\omega+2\sum_{j=1}^n\iint
a' |\Omega_j v|^2\,dt d\omega\label{III}
\end{align}
for $-\infty<t<t_0<0$ with $|t_0|$ large enough.

On the other hand, we consider the integration of the sum of
$\triangle_\tau v$ and $\triangle^-_\tau v$. Define
\begin{equation}
J:=\iint t^{-2} |\triangle_\tau v|^2 \, dtd\omega+ \iint t^{-2}
|\triangle_\tau^- v|^2 \, dtd\omega.
\end{equation}
Direction computations from (\ref{starr1}) and (\ref{starr2}) yields
that
\begin{align}
J =&2 \iint t^{-2}\{|\partial_t^2 v|^2+ 4(\tau+2\tau t^{-1})^2
|\partial_t v|^2+ 4t^{-4} \tau^2 v^2+(\tau+2\tau t^{-1})^4
v^2\nonumber\medskip\\& +\big((n-2)+a\big)^2|\partial_t
v|^2+[(n-2)(\tau+2\tau t^{-1})+b]^2 v^2+|\triangle_\omega v|^2\}
\,dtd\omega \nonumber\medskip\\&+ 4\langle t^{-2}
\partial_t^2 v, \quad
(\tau+2\tau t^{-1})^2v+[(n-2)(\tau+2\tau t^{-1})+b]
v+\triangle_\omega v\rangle \nonumber \medskip\\
&+ 4\langle t^{-2}(\tau+2\tau t^{-1})^2 v, \quad [(n-2)(\tau+2\tau
t^{-1})+b]
v+\triangle_\omega v\rangle \nonumber \medskip\\
& +4\langle t^{-2}[(n-2)(\tau+2\tau t^{-1})+b]v, \quad \triangle_\omega v\rangle
\nonumber\medskip\\&+8\langle t^{-2}(\tau+2\tau t^{-1})\partial_t v,  \quad
-2t^{-2}\tau v +\big((n-2)+a\big) v\rangle \nonumber\medskip\\ & +4\langle-2
t^{-4}\tau v, \quad \big((n-2)+a\big)\partial_t v\rangle. \label{anot}
\end{align}
We compute  each inner product in the expression of $J$,
respectively. Integration by parts shows that
\begin{align}
&4<t^{-2}
\partial_t^2 v, \quad
(\tau+2\tau t^{-1})^2v+ [(n-2)(\tau+2\tau t^{-1})+b]v> \nonumber
\\ & =-4\tau^2 \iint t^{-2}|\partial_t v|^2 \, dtd\omega+ \tau^2 \iint
O(t^{-3})|\partial_t v|^2 \, dtd\omega+\tau \iint O(t^{-2})
|\partial_t v|^2\, dtd\omega \nonumber \\
&+12 \tau^2 \iint O(t^{-4}) v^2\, dtd\omega +\tau \iint O(t^{-4})
v^2\, dtd\omega +\tau^2 \iint O(t^{-5}) v^2 \, dtd\omega\nonumber \\
&-4\iint bt^{-2}|\partial_t v|^2\, dtd\omega+2\iint
\partial_t^2(t^{-2}b) v^2\, dtd\omega.
\end{align}

Again, it follows from integration by parts  that
\begin{align}
4\langle t^{-2} \partial_t^2 v, \quad \triangle_\omega v\rangle &= 4\sum_{j=1}^{n}
\iint t^{-2} |\partial_t\Omega_j v|^2\, dtd\omega -8 \iint
t^{-3}\partial_t \Omega_j
v\Omega_j v \, dtd\omega\nonumber \\
&= 4\sum_{j=1}^{n} \iint t^{-2} |\partial_t\Omega_j v|^2\,
dtd\omega-24\iint t^{-4}|\Omega_j u|^2\, dtd\omega.
\end{align}
Direct calculations indicate that
\begin{align}
&4\langle {t^{-2}}(\tau+2\tau t^{-1})^2 v, \quad [(n-2)(\tau+2\tau
t^{-1})+b]v+\triangle_\omega v\rangle \nonumber
\\ &= 4(n-2)\iint t^{-2}(\tau+2\tau
t^{-1})^3 v^2\, dtd\omega \nonumber
\\
&+4\iint t^{-2}(\tau+2\tau t^{-1})b v^2\, dtd\omega \nonumber
\\
&-4 \sum_{j=1}^{n}\iint t^{-2}(\tau+2\tau t^{-1})^2|\Omega_j v|^2\,
dtd\omega.
\end{align}
From the integration by parts, we obtain
\begin{align}
&4\langle {t^{-2}}[(n-2)(\tau+2\tau t^{-1})+b] v, \quad \triangle_\omega v\rangle
\nonumber \\ &=-4\sum_{j=1}^{n} \iint t^{-2}[(n-2)(\tau+2\tau
t^{-1})+b]|\Omega_j v|^2\, dtd\omega.
\end{align}
It follows that
\begin{align}
&8\langle t^{-2}(\tau+2\tau t^{-1})\partial_t v, \quad -2t^{-2}\tau
v+\big((n-2)+a\big)\partial_t v\rangle \nonumber \\ &=\tau^2\iint (-32
t^{-5}-80 t^{-6}) v^2 \,dt d\omega \nonumber \\& +8\iint
t^{-2}(\tau+2\tau t^{-1})\big((n-2)+a\big)|\partial_t v|^2\,
dtd\omega.
\end{align}
Integration by parts yields  that
\begin{align}
&4\langle -2 t^{-4}\tau v, \quad \big((n-2)+a\big)\partial_t v\rangle \nonumber
\\
&=-4\iint t^{-4} \big((n-2)+a\big)\partial_t v^2 \, dtd\omega
\nonumber
\\
&=-16\iint t^{-5}\big((n-2)+a\big)v^2\, dtd\omega+4\iint t^{-4} a'
v^2 \, dtd\omega.
 \label{anothe}
\end{align}

Taking the identities from (\ref{anot}) to (\ref{anothe}) and
$|t_0|$ is sufficiently large into consideration gives that
\begin{align}
J\geq &2 \iint t^{-2}|\partial_t^2 v|^2 \, dtd\omega +\iint \big(
3\tau t^{-2}+\tau^2 O(t^{-3})\big)|\partial_t v|^2 \, dtd\omega
\nonumber
\\ &+
  2\sum^{n}_{j=1} \iint t^{-2}|\triangle_\omega v|^2 \, dtd\omega
 +4  \sum^{n}_{j=1} \iint t^{-2}|\partial_t\Omega_j v| \, dtd\omega
 \nonumber \\&
+\iint \big(\tau^4 t^{-2}+\tau^4 O(t^{-3}) \big)|v|^2 \, dt d\omega
\nonumber \\ &-\sum_{j=1}^n \iint \big( 5\tau^2
t^{-2}+\tau^2O(t^{-3})\big)|\Omega_j v|^2\, dtd\omega. \label{JJJ}
\end{align}

Now we consider the combined effect from $I$ and $J$, i.e. $\tau
I+J$. Using the fact that $-\infty<t<-|t_0|$ with $|t_0|$ large
enough and combining the estimates for $I$ in (\ref{III}) and
estimates for $J$ in (\ref{JJJ}) yields that
\begin{align}
\tau I+J &\geq 15\tau^4 \iint t^{-2}  |v|^2\, dtd\omega+
17\tau^2\iint
 t^{-2}|\partial_t v|^2 \, dtd\omega-6\tau^2\sum^{n}_{j=1} \iint
t^{-2}|\Omega_j
v|^2 \, dtd\omega\nonumber \\
&+2\iint t^{-2} |\partial_t^2 v|^2\, dtd\omega+ \sum^{n}_{j=1}\iint
2 t^{-2} |\triangle_\omega v|^2\, dtd\omega +4\sum^{n}_{j=1}\iint
t^{-2} |\partial_t
\Omega_j v|^2\, dtd\omega \nonumber \\
&=U+ \frac{11}{4}\tau^4\iint  t^{-2}  |v|^2\, dtd\omega+17\tau^2
\iint t^{-2}|\partial_t v|^2\, dtd\omega\nonumber
\\&+\tau^2\sum^{n}_{j=1} \iint t^{-2}|\Omega_j v|^2 \, dtd\omega
+2\iint t^{-2} |\partial_t^2 v|^2\, dtd\omega + \iint t^{-2}
|\triangle_\omega v|^2\, dtd\omega\nonumber
\\& +4\sum^{n}_{j=1}\iint t^{-2}
|\partial_t \Omega_j v|^2\, dtd\omega, \label{combin}
\end{align}
where
$$U= \frac{49}{4}\iint   t^{-2} \tau^4 |v|^2\, dtd\omega -7\tau^2\sum^{n}_{j=1} \iint t^{-2}|\Omega_j
v|^2\, dtd\omega + \sum^{n}_{j=1}\iint t^{-2} |\triangle_\omega
v|^2\, dtd\omega. $$ Note from integration by parts that
$$-\tau^2\sum^{n}_{j=1} \iint t^{-2}|\Omega_j
v|^2\, dtd\omega = \tau^2 \iint t^{-2}\triangle_\omega v v\,
dtd\omega.
$$ Thus, $U\geq 0$ by Cauchy-Schwartz inequality.

By the ellipticity of $\triangle_\omega $, there exists a constant
$C$ such that
\begin{equation}
\sum_{|\alpha|=2} \iint t^{-2}|\Omega^\alpha v|^2
\,dtd\omega\leq C\iint t^{-2}|\triangle_\omega v|^2 \,dtd\omega.
\end{equation}
It follows from (\ref{combin})  that
\begin{align}
C(\tau I+J )&\geq \tau^4\iint  t^{-2}  |v|^2\, dtd\omega+
\tau^2\iint
 t^{-2}|\partial_t v|^2\, dtd\omega +\tau^2\sum^{n}_{j=1} \iint t^{-2}|\Omega_j v|^2 \, dtd\omega \nonumber
\\&+\iint t^{-2}
|\partial_t^2 v|^2\, dtd\omega + \sum_{|\alpha|=2} \iint
t^{-2}|\Omega^\alpha v|^2 \,dtd\omega \nonumber
\\&+\sum^{n}_{j=1}\iint t^{-2}
|\partial_t \Omega_j v|^2\, dtd\omega.
\end{align}
Since
\begin{align}
(\tau I+J )& =\tau \|\triangle_\tau v\|^2_{L^2}-\tau
\|\triangle_\tau^- v\|^2_{L^2}+ \|t^{-1}\triangle_\tau
v\|_{L^2}^2+\|t^{-1}\triangle_\tau^- v\|_{L^2}^2 \nonumber \medskip \\
&\leq (\tau+1) \|\triangle_\tau v\|_{L^2}^2,
\end{align}
then
\begin{align}
(\tau+1)\|\triangle_\tau v\|^2& \geq \tau^4\iint  t^{-2}  |v|^2\,
dtd\omega+ \tau^2\iint  t^{-2}|\partial_t v|^2\, dtd\omega+
\tau^2\sum^{n}_{j=1} \iint t^{-2}|\Omega_j v|^2 \, dtd\omega
\nonumber
\\&+\iint
t^{-2} |\partial_t^2 v|^2\, dtd\omega+\sum_{|\alpha|=2} \iint
t^{-2}|\Omega^\alpha v|^2 \,dtd\omega  \nonumber
\\&+\sum^{n}_{j=1}\iint t^{-2}
|\partial_t \Omega_j v|^2\, dtd\omega.
\end{align}
That is,
\begin{equation}
\|\triangle_\tau v\|_{L^2}^2\geq C\sum_{j+|\alpha|\leq 2}
\tau^{3-2(j+|\alpha|)}\|t^{-1} \partial_t^j\Omega^\alpha
v\|_{L^2}^2. \label{reach}
\end{equation}
Returning to the coordinate $(r, \omega)$ and $u$, the latter
Carleman estimate (\ref{reach}) is equivalent to the following
\begin{align}
&C\sum_{|\alpha|\leq 2}
\tau^{\frac{3-2|\alpha|}{2}}\|e^{-\tau\phi(r)}(\log r)^{\sigma_2-1}
r^{\sigma_1+|\alpha|}|D^\alpha u| \|_{L^2(r^{-n}dx)}\nonumber \\
&\leq \|e^{-\tau\phi(r)}(\log r)^{\sigma_2} r^{\sigma_1+2} \triangle
u \|_{L^2(r^{-n}dx)}. \label{itera}
\end{align}
To get the Carleman estimates for higher order elliptic operators,
we iterate the estimate (\ref{itera}). Let us consider $m=2$ for example.
From (\ref{itera}), we obtain
\begin{align}
&\|e^{-\tau\phi(r)}(\log r)^{\sigma_2} r^{\sigma_1+4} \triangle^2
u \|_{L^2(r^{-n}dx)}  \nonumber \\
&\geq  C\sum_{|\alpha_1|\leq 2}
\tau^{\frac{3-2|\alpha_1|}{2}}\|e^{-\tau\phi(r)}(\log r)^{\sigma_2-1}
r^{\sigma_1+2+|\alpha_1|}|\triangle D^{\alpha_1} u| \|_{L^2(r^{-n}dx)} \nonumber \\
&\geq  C  \sum_{|\alpha_1|\leq 2} \sum_{|\alpha_2|\leq 2} \tau^{\frac{3-2|\alpha_1|}{2}}\tau^{\frac{3-2|\alpha_2|}{2}}
\|e^{-\tau\phi(r)}(\log r)^{\sigma_2-2}
r^{\sigma_1+|\alpha_1|+|\alpha_2|}| D^{\alpha_1+\alpha_2} u| \|_{L^2(r^{-n}dx)} \nonumber \\
&=C\sum_{|\alpha|\leq 4}
\tau^{\frac{6-2|\alpha|}{2}}\|e^{-\tau\phi(r)}(\log r)^{\sigma_2-2}
r^{\sigma_1+|\alpha|}|D^\alpha u| \|_{L^2(r^{-n}dx)}
\end{align}
for $\alpha=\alpha_1+\alpha_2$. Thus, the Carleman estimates (\ref{mainCar}) in the case of $m=2$ is shown.
Note that $\sigma_1$ and $\sigma_2$ are defined to be integers. In
each iteration, $\sigma_1$ and $\sigma_2$ represent different
integers, which is also the reason the iteration is able to be
carried out.
Iterating the estimate (\ref{itera})
$m$ times, it implies that
\begin{align}
&C\sum_{|\alpha|\leq 2m}
\tau^{\frac{3m-2|\alpha|}{2}}\|e^{-\tau\phi(r)}(\log r)^{\sigma_2-m}
r^{\sigma_1+|\alpha|}|D^\alpha u| \|_{L^2(r^{-n}dx)}\nonumber
\\& \leq \|e^{-\tau\phi(r)}(\log r)^{\sigma_2} r^{\sigma_1+2m}
\triangle^m u \|_{L^2(r^{-n}dx)}.\label{explain}
\end{align}
Therefore, we complete the proof of Proposition \ref{hicarl}.
\end{proof}

Based on the quantitative $L^2$ type Carleman estimates in
Proposition \ref{hicarl}, we are going to establish a $L^2\to L^p$
type Carleman estimates to deal with singular weight potentials. The
idea is to use Sobolev embedding and an interpolation argument
inspired by the idea in \cite{DZ17}.

\begin{proof}[Proof of Theorem \ref{CCarlpqp}]
 In particular, when $\sigma_1=\sigma_2=0$, the Carleman estimates
 (\ref{mainCar}) in Proposition \ref{hicarl} takes the form
\begin{equation}
\sum_{|\alpha|=0}^{2m-1}
\tau^{\frac{3m-2|\alpha|}{2}}\|e^{-\tau\phi(r)}(\log r)^{-m}
r^{|\alpha|}|D^\alpha u| \|_{L^2(r^{-n}dx)}\leq
\|e^{-\tau\phi(r)}r^{2m}  \triangle^m u \|_{L^2(r^{-n}dx)}.
\label{carle}
\end{equation}

Case I): Now we focus on the norm that only involves $u$.  For the
case of $n>4m-2$, since $u\in C^\infty_0(\mathbb B_R(x_0)\setminus
\{x_0\})$, by Sobolev embedding $W^{2m-1, 2}\hookrightarrow
L^{\frac{2n}{n-2(2m-1)}}$, we obtain the following
\begin{align}
&\|e^{-\tau\phi(r)}(\log r)^{-m} u \|_{L^\frac{2n}{n-2(2m-1)}(r^{-n}
dx)} \nonumber
\\ & =\|e^{-\tau\phi(r)}(\log r)^{-m}  r^{-\frac{n-2(2m-1)}{2}}
u\|_{L^\frac{2n}{n-2(2m-1)}} \nonumber \\ & \leq C \|
D^{2m-1}[e^{-\tau\phi(r)}(\log r)^{-m}
r^{-\frac{n-2(2m-1)}{2}}u]\|_{L^2} \nonumber \\
&=C\sum_{|\alpha_1|+|\alpha_2|+|\alpha_3|+|\alpha_4|=2m-1}
\tau^{|\alpha_4|} \|e^{-\tau\phi(r)} r^{-|\alpha_4|} (\log
r)^{-m-|\alpha_3|}r^{-|\alpha_3|}\nonumber\\
&\times  r^{-\frac{n-2(2m-1)}{2}-|\alpha_2|} |D^{\alpha_1}
u|\|_{L^2}\nonumber
\\
&\leq \sum_{|\alpha_1|+|\alpha_4|\leq 2m-1} \tau^{|\alpha_4|}
\|e^{-\tau\phi(r)}
 r^{-\frac{n}{2}+|\alpha_1|}(\log
r)^{-m}|D^{\alpha_1} u|\|_{L^2}\nonumber
\\
&= \sum_{|\alpha_1|+|\alpha_4|\leq 2m-1} \tau^{|\alpha_4|}
\tau^{\frac{-3m+2|\alpha_1|}{2}}
 \|e^{-\tau\phi(r)}(\log r)^{-m} r^{2m}\lap^m u
\|_{L^2(r^{-n}dx)} \nonumber \\
&\leq C  \tau^{\frac{m}{2}-1}
 \|e^{-\tau\phi(r)} r^{2m}\lap^m u
\|_{L^2(r^{-n}dx)}, \label{rll}
\end{align}
where we have used that $\phi'(r)=\frac{1}{r}+\frac{2}{r\log r} \le
\frac 1 r$ since $r \le R_0 \le 1$ and (\ref{carle}). It is known
from (\ref{mainCar}) that
\begin{equation}
\|e^{-\tau\phi(r)}(\log r)^{-m} u \|_{L^2(r^{-n}dx)}\leq C
\tau^{\frac{-3m}{2}}  \|e^{-\tau\phi(r)} r^{2m}\lap^m u|
\|_{L^2(r^{-n}dx)}. \label{ll2}
\end{equation}

We are going to do an interpolation argument with the last two
inequalities. Choose $\lambda \in \pr{0,1}$ so that $p = 2 \lambda +
\pr{1-\lambda} \frac{2n}{n-4m+2}$. By H\"older's inequality,
\begin{align*}
\|e^{-\tau \phi(r)}(\log r)^{-m} u\|_{L^p(r^{-n}dx)} &\le \|e^{-\tau
\phi(r)}(\log r)^{-m} u\|_{L^2(r^{-n}dx)}^{\frac{2\la}{p}}
\|e^{-\tau \phi(r)}(\log r)^{-m}
u\|_{L^{\frac{2n}{n-4m+2}}(r^{-n}dx)}^{\frac{2n\pr{1-\la}}{{(n-4m+2)p}}}.
\end{align*}
Since $\la  = \frac{2n-p(n-4m+2)}{8m-4}$, if we set $\te =
\frac{2\la}{p}$, then $$\theta =
\frac{2}{p}\cdot\frac{2n-p(n-4m+2)}{8m-4}=\frac{2n-p(n-4m+2)}{(4m-2)p},$$
and $1 - \te = \frac{(p-2)n}{\pr{4m-2}p}$.  Therefore, from
(\ref{rll}) and (\ref{ll2}),
\begin{align*}
& \|e^{-\tau \phi(r)}(\log r)^{-m} u\|_{L^p(r^{-n}dx)} \medskip\\
&\leq  \|e^{-\tau \phi(r)}(\log r)^{-m}
u\|_{L^{2}(r^{-n}dx)}^{\theta} \|e^{-\tau\phi(r)}(\log
r)^{-m} u\|_{L^\frac{2n}{n-4m+2}(r^{-n}dx)}^{1-\theta} \medskip\\
&\le \brac{C\tau^{\frac{-3m}{2}} \| e^{-\tau \phi(r)}
r^{2m}\triangle^m u\|_{L^2(r^{-n} dx)}}^\te
\brac{C \tau^{\frac{m-2}{2}}  \| e^{-\tau \phi(r)} r^{2m} \triangle^m u\|_{L^2(r^{-n} dx)}}^{1 - \te} \medskip\\
&= C \tau^{-\frac{3m}{2}\theta+\frac{m-2}{2}(1-\theta)} \| e^{-\tau
\phi(r)} r^{2m} \triangle^m u\|_{L^2(r^{-n} dx)} \medskip\\
&= C \tau^{\frac{m-2}{2}-\frac{2n-p(n-4m+2)}{2p}} \| e^{-\tau
\phi(r)} r^{2m} \triangle^m u\|_{L^2(r^{-n} dx)}.
\end{align*}
That is, for any $2\leq p\leq \frac{2n}{n-4m+2}$, we have
\begin{equation}
\tau^{\frac{3m}{2}-\frac{n(p-2)}{2p}}\|e^{-\tau \phi(r)}(\log
r)^{-m} u\|_{L^p(r^{-n}dx)}\leq C \| e^{-\tau \phi(r)} r^{2m}
\triangle^m u\|_{L^2(r^{-n} dx)}.
\end{equation}

It is obvious from \eqref{carle} that
\begin{equation}
\sum_{|\alpha|=1}^{ 2m-1}
\tau^{\frac{3m-2\alpha}{2}}\|e^{-\tau\phi(r)}(\log r)^{-m}
r^{|\alpha|}D^\alpha u \|_{L^2(r^{-n}dx)}\leq C
\|e^{-\tau\phi(r)}r^{2m} \triangle^m u| \|_{L^2(r^{-n}dx)}.
\end{equation}

The combination of the previous two inequalities yields that
\begin{align}
\tau^{\frac{3m}{2}-\frac{n(p-2)}{2p}}\|(\log r)^{-m} e^{-\tau
\phi(r)}u\|_{L^p(r^{-n}dx)} &+\sum_{|\alpha|=1}^{ 2m-1}
\tau^{\frac{3m-2\alpha}{2}}\|e^{-\tau\phi(r)}(\log r)^{-m}
r^{|\alpha|}|D^\alpha u| \|_{L^2(r^{-n}dx)} \nonumber \\ &\leq C
\|e^{-\tau\phi(r)}r^{2m} | \triangle^m u| \|_{L^2(r^{-n}dx)}.
\end{align}
This completes the proof of (\ref{mainth}).

 Case II): For the case $n=4m-2$, we use the same idea as before. By Sobolev embedding
$W^{2m-1, 2}\hookrightarrow {L^{p'}}$ for $2\leq p'<\infty$, we
obtain that
\begin{align}
&\|e^{-\tau\phi(r)}(\log r)^{-m} u \|_{L^{p'}} \nonumber \\ & \leq C
\| D^{2m-1}[e^{-\tau\phi(r)}(\log r)^{-m}
u]\|_{L^2} \nonumber \\
&=C\sum_{|\alpha_1|+|\alpha_2|+|\alpha_3|+|\alpha_4|=2m-1}
\tau^{|\alpha_1|} \|e^{-\tau\phi(r)} r^{-|\alpha_1|}(\log
r)^{-m-|\alpha_2|}r^{-|\alpha_2|-|\alpha_4|} D^{\alpha_3}
u\|_{L^2}\nonumber
\\
&\leq C\sum_{|\alpha_1|+|\alpha_2|+|\alpha_3|\leq 2m-1}
\tau^{|\alpha_1|} \|e^{-\tau\phi(r)} r^{-(2m-1)+\frac{n}{2}}(\log
r)^{-m-|\alpha_2|}r^{|\alpha_3|} D^{\alpha_3} u\|_{L^2(r^{-n}dx)}\nonumber \\
&\leq C\sum_{|\alpha_1|+|\alpha_3|\leq 2m-1} \tau^{|\alpha_1|}
\|e^{-\tau\phi(r)} (\log r)^{-m}r^{|\alpha_3|} D^{\alpha_3}
u\|_{L^2(r^{-n}dx)}\nonumber
\\
&\leq C \sum_{|\alpha_1|+|\alpha_3|\leq 2m-1} \tau^{|\alpha_1|}
\tau^{\frac{-3m+2|\alpha_3|}{2}}
 \|e^{-\tau\phi(r)} r^{2m}\lap^m u
\|_{L^2(r^{-n}dx)} \nonumber \\
&\leq C  \tau^{\frac{m}{2}-1}
 \|e^{-\tau\phi(r)} r^{2m}\lap^m u
\|_{L^2(r^{-n}dx)}, \label{cccc}
\end{align}
where we have used the fact $n=4m-2$ and (\ref{carle}). From
(\ref{mainCar}), we have
\begin{equation}
\|e^{-\tau\phi(r)}(\log r)^{-m} u \|_{L^2}\leq C
\tau^{\frac{-3m}{2}}  \|e^{-\tau\phi(r)} r^{2m}\lap^m u|
\|_{L^2(r^{-n}dx)}. \label{llll}
\end{equation}

As before, we interpolate the last two inequalities. Choose $\la \in
\pr{0,1}$ so that $p = 2 \la + \pr{1-\la} p'$. Note that $2<p<p'$.
The H\"older's inequality implies that
\begin{align*}
\|e^{-\tau \phi(r)}(\log r)^{-m} u\|_{L^p} &\le \|e^{-\tau
\phi(r)}(\log r)^{-m} u\|_{L^2}^{\frac{2\la}{p}} \|e^{-\tau
\phi(r)}(\log r)^{-m} u\|_{L^{p'}}^{\frac{p'\pr{1-\la}}{p}}.
\end{align*}
Since $\la  = \frac{p'-p }{p'-2}$, if we set $\te = \frac{2\la}{p} =
\frac{2(p'-p)}{p(p'-2)}$, then $$1 - \te =
1-\frac{2(p'-p)}{p(p'-2)}= \frac{p'(p-2)}{(p'-2)p}$$ and we have
that $0\leq \theta\leq 1$. Therefore,
\begin{align*}
& \| e^{-\tau \phi(r)}(\log r)^{-m}u\|_{L^p} \\
&\leq  \| e^{-\tau \phi(r)}(\log r)^{-m}u\|_{L^{{2}}}^{\theta} \|
e^{-\tau\phi(r)}(\log
r)^{-m}u\|_{L^{p'}}^{1-\theta} \\
&\le \brac{C\tau^{\frac{-3m}{2}} \|e^{-\tau \phi(r)} r^{2m}
\triangle^m u\|_{L^2(r^{-n} dx)}}^\te
\brac{C \tau^{\frac{m-2}{2}}  \| e^{-\tau \phi(r)} r^{2m} \triangle^m u\|_{L^2(r^{-n} dx)}}^{1 - \te} \\
&= C \tau^{\frac{m-2}{2}-\frac{(4m-2)\theta}{2}} \|e^{-\tau \phi(r)}
r^{2m} \triangle^m
u\|_{L^2(r^{-n} dx)} \\
&=C \tau^{\frac{m-2}{2}-\frac{(4m-2)}{2}\frac{2(p'-p)}{p(p'-2)}} \|
e^{-\tau \phi(r)} r^{2m} \triangle^m u\|_{L^2(r^{-n} dx)}  \\
&=C \tau^{\frac{m-2}{2}-\frac{(4m-2)}{p}(1-\eps)} \| e^{-\tau
\phi(r)} r^{2m} \triangle^m u\|_{L^2(r^{-n} dx)},
\end{align*}
where we have used \eqref{cccc} and \eqref{llll}. Here
$\epsilon=\frac{p-2}{p'-2}$. Since $ p<p'<\infty$, then
$0<\epsilon<1$. Moreover, $p'$ can be any sufficiently large
constant that approaches to infinity so that $\epsilon$ can be
chosen to be any sufficiently small constant that approaches $0$.
Thus, for any $2<p<\infty$,
\begin{equation}
\tau^{\frac{4m-2}{p}(1-\eps)-\frac{m-2}{2}}\|(\log r)^{-m} e^{-\tau
\phi(r)}u\|_{L^p} \leq C \|e^{-\tau \phi(r)} r^{2m} \triangle^m
u\|_{L^2(r^{-n} dx)}. \label{interr}
\end{equation}
Combing (\ref{carle}) with the last inequality gives the proof of
(\ref{mainth2}) in Theorem \ref{CCarlpqp}.

Case III): For the case of $2\leq n< 4m-2$, by the Sobolev embedding inequality, $W^{2m-1, 2}\hookrightarrow {L^\infty}$,
 we have
\begin{align}
\|e^{-\tau\phi(r)}(\log r)^{-m} u \|_{L^{\infty}} \leq C
\| D^{2m-1}[e^{-\tau\phi(r)}(\log r)^{-m}
u]\|_{L^2}.  \nonumber
\end{align}
The similar arguments as Case I and II shows that
\begin{align}
\|e^{-\tau\phi(r)}(\log r)^{-m} u \|_{L^{\infty}} \leq C \tau^{\frac{m}{2}-1}
\| e^{-\tau\phi(r)}r^{2m}
\triangle^m u\|_{L^2(r^{-n} dx)}.
\label{game}
\end{align}
The estimates (\ref{mainth3}) gives that
\begin{equation}
\|e^{-\tau\phi(r)}(\log r)^{-m} u \|_{L^2}\leq C
\tau^{\frac{-3m}{2}}  \|e^{-\tau\phi(r)} r^{2m}\lap^m u|
\|_{L^2(r^{-n}dx)}. \label{game2}
\end{equation}
Using H\"older's inequality, for any $2\leq p\leq \infty$, we interpolate the inequality (\ref{game}) and (\ref{game2}),
\begin{align*}
& \| e^{-\tau \phi(r)}(\log r)^{-m}u\|_{L^p} \\
&\leq  \| e^{-\tau \phi(r)}(\log r)^{-m}u\|_{L^{{2}}}^{\frac{2}{p}} \|
e^{-\tau\phi(r)}(\log
r)^{-m}u\|_{L^{\infty}}^{1-\frac{2}{p}} \\
&\le \brac{C\tau^{\frac{-3m}{2}} \|e^{-\tau \phi(r)} r^{2m}
\triangle^m u\|_{L^2(r^{-n} dx)}}^{\frac{2}{p}}
\brac{C \tau^{\frac{m-2}{2}}  \| e^{-\tau \phi(r)} r^{2m} \triangle^m u\|_{L^2(r^{-n} dx)}}^{1-\frac{2}{p}} \\
&\leq \tau^{\frac{m-2}{2}-\frac{(4m-2)}{p}} \| e^{-\tau
\phi(r)} r^{2m} \triangle^m u\|_{L^2(r^{-n} dx)}.
\end{align*}
Together (\ref{carle}) with the last inequality gives the proof of
(\ref{mainth3}) in Theorem \ref{CCarlpqp}.

Finally, we arrive at the proof of the three cases in Theorem
\ref{CCarlpqp}.
\end{proof}

\section{Vanishing order}
\label{vanOrd}

In this section, we show the vanishing order of the solutions. For
the preparations, we need some kind of Caccioppoli inequality and a
$L^\infty$ bound estimate. We first prove a quantitative type
Caccioppoli inequality for the higher order elliptic equation. In
such inequality, we want to know how the coefficient depends on the
norms of the potential $V_0$ and coefficient functions $V_\alpha$.
The idea is adapted from the Corollary 17.1.4 in the classical book
of H\"ormander \cite{H85}. More delicate analysis is required to
take care of appearance of singular potential $V_0$.

\begin{lemma} Let $u$ be the solution in (\ref{goal}). There exists
a positive constant $C$ that does not depend of $u$ such that
\begin{equation}
\sum_{|\alpha|=0}^{ 2m-1}\|r^{|\alpha|} D^\alpha u\|_{L^2(\mathbb
B_{c_2R_0}\setminus \mathbb B_{c_3R_0})}\leq C (\sum_{|\alpha|=1}^{
\alpha_0}\|V_{\alpha}\|_{L^\infty}+\|V_0\|_{L^s}+1  )^{2m-1} \|
u\|_{L^2(\mathbb B_{c_1R_0}\setminus \mathbb B_{c_4R_0})}
\end{equation}
for all positive constants $0<c_4<c_3<c_2<c_1$. \label{hormmm}
\end{lemma}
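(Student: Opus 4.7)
The plan is to adapt H\"ormander's interior regularity argument for elliptic operators of order $2m$ (\cite{H85}, Corollary~17.1.4) to the present singular setting. Set $M = \sum_{|\alpha|=1}^{\alpha_0}\|V_\alpha\|_{L^\infty} + \|V_0\|_{L^s}$. First I would introduce a decreasing chain of annuli $A_0\supset A_1\supset\cdots\supset A_{2m-1}$ obtained by equally spacing the inner and outer radii between $c_4 R_0 < c_3 R_0$ and $c_2 R_0 < c_1 R_0$, so that $A_0 = \mathbb{B}_{c_1 R_0}\setminus\mathbb{B}_{c_4 R_0}$ and $A_{2m-1} = \mathbb{B}_{c_2 R_0}\setminus\mathbb{B}_{c_3 R_0}$, together with smooth cutoffs $\eta_k\in C_0^\infty(A_k)$ identically one on $A_{k+1}$ with $|D^j\eta_k|\le CR_0^{-j}$ for $j\le 2m$.

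The main ingredient is the standard interior $L^2$ estimate for $\triangle^m$: for any $v\in C_0^\infty(\mathbb R^n)$,
\[ \|v\|_{W^{2m,2}(\mathbb R^n)} \le C\bigl(\|\triangle^m v\|_{L^2(\mathbb R^n)} + \|v\|_{L^2(\mathbb R^n)}\bigr). \]
Applied to $v = \eta_k u$ and expanding $\triangle^m(\eta_k u) = \eta_k\triangle^m u + [\triangle^m,\eta_k]u$, the commutator term is a linear combination of $D^\beta\eta_k\cdot D^\gamma u$ with $|\beta|\ge 1$ and $|\beta|+|\gamma|=2m$, and thus involves derivatives of $u$ of order at most $2m-1$ on $A_k$. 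Substituting $\triangle^m u = -\sum V_\alpha D^\alpha u - V_0 u$ from \eqref{goal}, the $V_\alpha D^\alpha u$ terms are bounded in $L^2$ by $M\|u\|_{W^{2m-1,2}(A_k)}$ since $\alpha_0\le 2m-1$, while $V_0 u$ is handled by H\"older and the Sobolev embedding $W^{2m-1,2}(A_k)\hookrightarrow L^{2s/(s-2)}(A_k)$, valid in each of the three regimes of $n$ under the stated lower bounds on $s$ (the same embedding already used in the proof of Theorem \ref{CarlpqVW}). This yields the one-step bound
\[ \|u\|_{W^{2m,2}(A_{k+1})} \le C(M+1)\|u\|_{W^{2m-1,2}(A_k)}. \]

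Combining this with the Gagliardo--Nirenberg interpolation $\|u\|_{W^{2m-1,2}(A_k)} \le \epsilon\|u\|_{W^{2m,2}(A_k)} + C_\epsilon\|u\|_{L^2(A_k)}$ and absorbing the top-order term through the standard nested-domain telescoping over the chain $A_0\supset\cdots\supset A_{2m-1}$, I would obtain
\[ \sum_{|\alpha|\le 2m-1}\|D^\alpha u\|_{L^2(A_{2m-1})} \le C(M+1)^{2m-1}\|u\|_{L^2(A_0)}. \]
Since $r\le c_1 R_0\le 1$ on $A_0$, inserting the weight $r^{|\alpha|}$ only decreases the left side, which gives the stated estimate.

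The main obstacle will be controlling the singular term $V_0 u$ inside the iteration, since the naive bound $\|V_0 u\|_{L^2}\le\|V_0\|_{L^\infty}\|u\|_{L^2}$ is no longer available. The Sobolev embedding $W^{2m-1,2}\hookrightarrow L^{2s/(s-2)}$ is precisely what permits trading the $L^s$ norm of $V_0$ for a single extra derivative on $u$, and its validity in each of the three dimensional regimes is exactly where the hypothesis on $s$ enters -- mirroring the three-case split in Theorems \ref{CCarlpqp} and \ref{CarlpqVW}. A secondary technical point is the bookkeeping of the iteration, which must compound exactly $2m-1$ factors of $(M+1)$; this is achieved by the nested-domain absorption consuming one annulus per step while reducing the Sobolev order by one.
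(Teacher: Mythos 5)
Your proposal correctly identifies the two essential ingredients and in this respect mirrors the paper: bounding $\triangle^m u$ by at most $2m-1$ derivatives of $u$ using the equation (with the Sobolev embedding $W^{2m-1,2}\hookrightarrow L^{2s/(s-2)}$ trading the $L^s$ norm of $V_0$ for one derivative, valid in each dimensional regime under the stated hypotheses on $s$), and then an absorption argument to reach $(M+1)^{2m-1}$. Where you diverge is in the engine of the absorption. The paper does not iterate a basic interior regularity estimate; it invokes H\"ormander's weighted multiplicative estimate (Corollary~17.1.4/17.3), which for each $|\alpha|\le 2m-1$ gives $\|d^{|\alpha|}D^\alpha u\|_{L^2(X)}\le C\bigl(\|d^{2m}\triangle^m u\|_{L^2(X)}+\|u\|_{L^2(X)}\bigr)^{|\alpha|/2m}\|u\|_{L^2(X)}^{1-|\alpha|/2m}$, after which one inserts the equation, sums, and solves the resulting relation $S\le C(\beta S)^{(2m-1)/2m}\|u\|_{L^2}^{1/2m}$ for $S$ by a Young-inequality rearrangement. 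This is slicker than re-deriving the iteration.

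The one genuine flaw in your write-up is the stated mechanism of the telescope: ``consuming one annulus per step while reducing the Sobolev order by one.'' Interior elliptic regularity for $\triangle^m$ gains $2m$ derivatives at once; there is no step of the form $\|u\|_{W^{j,2}(A_{k+1})}\le C(M+1)\|u\|_{W^{j-1,2}(A_k)}$ for $j<2m$, so you cannot walk down one Sobolev order at a time. What actually produces $(M+1)^{2m-1}$ is a single absorption: insert your one-step bound, apply $\|u\|_{W^{2m-1,2}}\le\eps\|u\|_{W^{2m,2}}+C\eps^{-(2m-1)}\|u\|_{L^2}$, choose $\eps\sim\bigl(C(M+1)+(s-t)^{-2m}\bigr)^{-1}$, and absorb the $\eps\|u\|_{W^{2m,2}}$ term by the standard ``filling-the-gap'' iteration over a continuous or dyadic family of nested annuli (holding the Sobolev order fixed at $2m{-}1$). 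The exponent $2m-1$ then emerges from $\eps^{-(2m-1)}$, not from $2m-1$ discrete reductions of the order. With that correction your route closes, but as written the final paragraph misattributes the source of the power.
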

\begin{proof}
From Corollary 17.3 in \cite{H85}, it holds that
\begin{align}
\| d^{|\alpha|}(x) D^\alpha u\|_{L^2(X)}\leq C(\| d^{2m}(x)
\triangle^m u\|_{L^2(X)}+\|u\|_{L^2(X)})^{\frac{|\alpha|}{2m}}
\|u\|_{L^2(X)}^{1-\frac{|\alpha|}{2m}}, \label{horm}
\end{align}
where $d(x)$ is the distance from $X$ to $X^c$ and $X^c$ is the
complement of $X$. We apply the estimate (\ref{horm}) with
$X=\mathbb B_{c_1R_0}\setminus \mathbb B_{c_4R_0}$ for some small
$R_0$ with $0<c_4<c_1$. Since $u$ is the solution of the equation
(\ref{goal}),
\begin{align}
d^{2m}\triangle^m u&= d^{2m}( \sum_{|\alpha|=1}^{\alpha_0}
V_{\alpha}
D^\alpha u+ V_0 u) \nonumber \\
&= \sum_{|\alpha|=1}^{\alpha_0 } d^{2m-|\alpha|}(x) d^{|\alpha|}(x)
V_{\alpha} D^\alpha u+d^{2m}V_0u. \label{ho11}
\end{align}
From H\"older's inequality, it follows that
\begin{equation}
\|d^{2m}(x) V_0 u\|_{L^2(X)}\leq \|V_0\|_{L^s(X)}\|d^{2m}(x)
u\|_{L^{\frac{2s}{s-2}}(X)}. \label{cca}
\end{equation}
We first consider the case $n>4m-2$.
 Since $\frac{n}{2m-1}\leq \frac{2n}{3m} < s\leq \infty$, then $2\leq
\frac{2s}{s-2}<\frac{2n}{2n-3m}\leq \frac{2n}{n-4m+2}$. Note that
$d(x)$ is a smooth function if $R_0$ is small. From the Sobolev
embedding $W^{2m-1, 2}\hookrightarrow L^{\frac{2n}{n-2(2m-1)}}$, we
obtain
\begin{equation}
\| d^{2m}u\|_{L^{\frac{2s}{s-2}}(X)}\leq C(
\sum_{|\alpha|=2m-1}\|D^{\alpha} (d^{2m} u)\|_{L^2(X)}+\|
d^{2m}u\|_{L^2(X)}). \label{imbed}
\end{equation}
In the case of $n= 4m-2$, since $s>\frac{4(2m-1)}{3m}$, then $s>2$.
From the Sobolev embedding  $W^{2m-1, 2}\hookrightarrow L^{p}$, for
any $2\leq p<\infty$, we can also obtain (\ref{imbed}) with a
different constant $C$ that does not depend on $u$. It follows from
(\ref{cca}) and (\ref{imbed}) that
\begin{align}
\|d^{2m}(x) V_0 u\|_{L^2(X)}&\leq C
\|V_0\|_{L^s(X)}(\sum_{|\alpha|=0}^{ 2m-1}\| d^{1+|\alpha|}D^\alpha
u\|_{L^2(X)}+\| d^{2m}u\|_{L^2(X)}
)\nonumber \\
&\leq C\|V_0\|_{L^s(X)}(\sum_{|\alpha|=0}^{ 2m-1}\|
d^{1+|\alpha|}D^\alpha u\|_{L^2(X)}). \label{used}
\end{align}
In the case $2\leq n<4m-2$, similar arguments as before show that
\begin{equation}
\|d^{2m}(x) V_0 u\|_{L^2(X)}\leq C\|V_0\|_{L^s(X)}(\sum_{|\alpha|=0}^{ 2m-1}\|
d^{1+|\alpha|}D^\alpha u\|_{L^2(X)}).\label{usedd}
\end{equation}
 Choose $$\beta=\sum_{ |\alpha|=1}^{
2m-1}\|V_{\alpha}\|_{L^\infty}+\|V_0\|_{L^s}+1.$$ Using the
estimates (\ref{horm}) and (\ref{ho11}), it follows that
\begin{align} \| d^{|\alpha|}(x) D^{\alpha} u\|_{L^2(X)}&\leq C\big(
\sum_{ |\alpha|=1}^{2m-1}
R_0^{2m-\alpha}\|V_\alpha\|_{L^\infty}\|d^{|\alpha|}(x) D^\alpha
u\|_{L^2(X)}\nonumber \\ &+\|d^{2m}V_0
u\|_{L^2(X)}+\|u\|_{L^2(X)}\big)^{\frac{|\alpha|}{2m}}
\|u\|_{L^2(X)}^{1-\frac{|\alpha|}{2m}} \nonumber \\
&\leq C(\beta \sum_{|\alpha|=0}^{ 2m-1}\|d^{|\alpha|}(x) D^\alpha
u\|_{L^2(X)})^{\frac{|\alpha|}{2m}}\|u\|_{L^2(X)}^{1-\frac{|\alpha|}{2m}},
\label{cacc}
\end{align}
where we applied the estimate (\ref{used}) for the case $n\geq 4m-2$
and (\ref{usedd}) for the case $2\leq n<4m-2$ in last inequality,
and the fact that $R_0$ is small. Let
$$ S=\sum_{|\alpha|=0}^{2m-1}\|d^{|\alpha|}(x) D^\alpha
u\|_{L^2(X)}.$$ Taking the sum for $0\leq \alpha\leq 2m-1$, from
(\ref{cacc}), we obtain
\begin{align}
S &\leq C\sum_{|\alpha|\leq 2m-1}( \beta
S)^{\frac{|\alpha|}{2m}}\|u\|_{L^2(X)}^{1-\frac{|\alpha|}{2m}} \nonumber \\
&\leq C( \beta S)^{\frac{2m-1}{2m}}\|u\|_{L^2(X)}^{\frac{1}{2m}}.
\end{align}
Thus,
\begin{equation}
S\leq C\beta^{2m-1}\|u\|_{L^2(X)}. \label{cacc1}
\end{equation}
From (\ref{cacc1}), we have
\begin{equation}
\sum_{|\alpha|=0}^{ 2m-1} \int_{ c_2 R_0\leq |d(x)|\leq c_3 R_0} |
d^{|\alpha|}(x) D^\alpha u|^2\,dx\leq C\beta^{4m-2} \int_{ c_4
R_0\leq |d(x) |\leq c_1 R_0} | u|^2\,dx
\end{equation}
for all $0<c_4<c_3<c_2<c_1$. This completes the proof of the lemma.
\end{proof}

We need to establish a $L^\infty$-version of  three-ball theorem.
However, it seems that the classical De Giorgi-Nash-Moser theory
does not exist for higher order elliptic equations. We will deduce
the estimate by Sobolev embedding and a $W^{2m,p}$ type estimate. We
first present a $W^{2m,p}$ type estimates for higher order elliptic
equations (see e.g. \cite{ADN59}). Let $u$ satisfy the following
equation
\begin{equation}
\lap^m u+\sum^{2m-1}_{|\alpha|=0} V_\alpha D^\alpha u=g(x)  \quad
\quad \mbox{in} \ \mathbb B_1, \label{tts}
\end{equation}
where $\|V_\alpha\|_{L^\infty}\leq 1$ for every $\alpha$. Then we
have
\begin{lemma}
 Let
$1<p<\infty$. Suppose $u\in W^{2m, p} $ satisfies (\ref{tts}). Then
there exits a constant $C>0$ depending only on $ n, m $ such that
for any $\sigma\in (0, 1)$,
\begin{equation}
\|u\|_{W^{2m,p}( \mathbb B_{\sigma })}\leq
\frac{C(n,m)}{(1-\sigma)^{2m}}\big(\|g\|_{L^p(\mathbb
B_1)}+\|u\|_{L^p(\mathbb B_1)}\big). \label{apr}
\end{equation}
\label{afa}
\end{lemma}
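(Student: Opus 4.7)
The plan is to reduce Lemma \ref{afa} to the fixed-scale interior $W^{2m,p}$ estimate from Agmon--Douglis--Nirenberg by a rescaling plus finite-covering argument. The uniformity of the constant is made possible by the assumption $\|V_\alpha\|_{L^\infty} \le 1$ and the ellipticity of the principal part $\Delta^m$.

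First I would invoke the standard ADN interior estimate: if $L = \Delta^m + \sum_{|\alpha|\le 2m-1} W_\alpha D^\alpha$ has measurable coefficients with $\|W_\alpha\|_{L^\infty(\mathbb B_1)} \le 1$, and $v \in W^{2m,p}(\mathbb B_1)$ solves $Lv = h$, then
\begin{equation}
\|v\|_{W^{2m,p}(\mathbb B_{1/2})} \le C(n,m,p)\pr{\|h\|_{L^p(\mathbb B_1)} + \|v\|_{L^p(\mathbb B_1)}}. \label{adnfixed}
\end{equation}
This is the classical $L^p$ Calderon--Zygmund-type estimate for higher order elliptic operators; the essential point for us is that the constant is uniform in the lower-order coefficients once they are uniformly $L^\infty$-bounded.

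Next I would fix $\sigma \in (0,1)$, set $\rho = (1-\sigma)/2$, and carry out a scaling argument at every point $x_0 \in \overline{\mathbb B_\sigma}$. Defining $\tilde u(y) = u(x_0 + \rho y)$, $\tilde g(y) = \rho^{2m} g(x_0 + \rho y)$, and $\tilde V_\alpha(y) = \rho^{2m-|\alpha|} V_\alpha(x_0 + \rho y)$ for $y \in \mathbb B_1$, the chain rule yields
\[
\Delta^m \tilde u + \sum_{|\alpha|\le 2m-1} \tilde V_\alpha D^\alpha \tilde u = \tilde g \quad \text{in } \mathbb B_1,
\]
and since $\rho \le 1/2$, $|\alpha|\le 2m-1$, and $\|V_\alpha\|_{L^\infty}\le 1$, the rescaled coefficients satisfy $\|\tilde V_\alpha\|_{L^\infty(\mathbb B_1)} \le 1$. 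Applying \eqref{adnfixed} to $\tilde u$ and translating back to the $x$ variable gives, for each $|\beta|\le 2m$,
\[
\rho^{|\beta|}\|D^\beta u\|_{L^p(\mathbb B_{\rho/2}(x_0))} \le C\pr{\rho^{2m}\|g\|_{L^p(\mathbb B_\rho(x_0))} + \|u\|_{L^p(\mathbb B_\rho(x_0))}}.
\]
Dividing by $\rho^{|\beta|}$ and using $\rho \le 1$ bounds the right side by $C\rho^{-2m}(\|g\|_{L^p(\mathbb B_1)} + \|u\|_{L^p(\mathbb B_1)})$.

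Finally I would cover $\overline{\mathbb B_\sigma}$ by a finite collection of balls $\mathbb B_{\rho/2}(x_j)$ with bounded overlap (the covering number depending only on $n$), sum the preceding pointwise inequality over this cover and over $|\beta|\le 2m$, and use $\rho = (1-\sigma)/2$ to recover the factor $(1-\sigma)^{-2m}$ in \eqref{apr}. The only genuinely substantive step is the uniform-in-coefficients version of the interior estimate \eqref{adnfixed}; once that is in hand, everything else is routine scaling bookkeeping and a Vitali-type cover.
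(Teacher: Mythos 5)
Your proof is correct. The paper does not actually prove Lemma~\ref{afa}; it is stated as a known interior $W^{2m,p}$ estimate with a citation to Agmon--Douglis--Nirenberg, so there is no proof in the paper to compare against. Your rescaling-plus-covering argument is the standard derivation of the explicit $(1-\sigma)^{-2m}$ dependence from the fixed-scale estimate, and the scaling checks out: with $\rho=(1-\sigma)/2$ one has $\mathbb B_\rho(x_0)\subset\mathbb B_1$ for $x_0\in\overline{\mathbb B_\sigma}$, and the rescaled coefficients $\tilde V_\alpha(y)=\rho^{2m-|\alpha|}V_\alpha(x_0+\rho y)$ satisfy $\|\tilde V_\alpha\|_{L^\infty}\le 1$ since $\rho\le 1$ and $|\alpha|\le 2m-1$. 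Two points are worth making precise. In the covering step it is the bounded overlap of the balls $\{\mathbb B_\rho(x_j)\}$ (a constant depending only on $n$) that matters, not their number (which grows like $(1-\sigma)^{-n}$); after raising the per-ball estimate to the $p$-th power, summing over $j$, and using that the smaller balls cover $\mathbb B_\sigma$ while the larger ones lie in $\mathbb B_1$ with overlap at most $M(n)$, you obtain the claimed bound with no additional $\sigma$-dependent loss. Also, the uniform-in-coefficients fixed-scale estimate you invoke is not literally the constant-coefficient ADN theorem: one first applies the pure $\Delta^m$ Calder\'on--Zygmund interior estimate, then moves $\sum W_\alpha D^\alpha v$ to the right-hand side and absorbs it via Gagliardo--Nirenberg interpolation over a nested family of balls; the hypothesis $\|W_\alpha\|_{L^\infty}\le 1$ is exactly what makes the absorbed constant uniform, and this absorption step deserves at least a sentence. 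Finally, the constant necessarily depends on $p$ as well, so your $C(n,m,p)$ is the accurate dependence and the paper's $C(n,m)$ is a mild abuse of notation.
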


We are going to establish a $L^\infty$ bound for the solution of
$$  \triangle^m u+ \sum^{\alpha_0}_{|\alpha|=1} V_\alpha(x)\cdot D^\alpha
u+V_0(x) u=0 $$ using the last lemma.

\begin{lemma} Let $u$ be the solution in (\ref{goal}). There exists
a positive constant $C$ independent of $u$ such that
\begin{equation}
\|u\|_{L^\infty(\mathbb B_r)} \leq C
(\sum^{\alpha_0}_{|\alpha|=1}\|V_\alpha\|_{L^\infty}+\|V_0\|_{L^s}+1
)^{\frac{n}{2}} r^{-\frac{n}{2}}\|u\|_{L^2 (\mathbb B_{2r})}
\label{regular}.
\end{equation}
\label{lemma2}
\end{lemma}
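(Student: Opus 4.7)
The plan is to combine a rescaling with the interior $W^{2m,p}$-estimate of Lemma~\ref{afa} and an iterative Sobolev bootstrap to pass from an $L^2$ to an $L^\infty$ bound. First, set $v(y)=u(ry)$ for $y\in\mathbb B_2$. A direct computation shows that $v$ satisfies an equation of the form \eqref{tts} with rescaled coefficients $\tilde V_\alpha(y)=r^{2m-|\alpha|}V_\alpha(ry)$ and $\tilde V_0(y)=r^{2m}V_0(ry)$. Since $r\le 1$ and the standing hypotheses on $s$ in each of the three cases of Theorem~\ref{thh} imply $s>n/(2m)$, we have $\|\tilde V_\alpha\|_{L^\infty(\mathbb B_2)}\le\|V_\alpha\|_{L^\infty}$ and $\|\tilde V_0\|_{L^s(\mathbb B_2)}\le\|V_0\|_{L^s}$. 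It therefore suffices to prove $\|v\|_{L^\infty(\mathbb B_{1/2})}\le C\beta^{n/2}\|v\|_{L^2(\mathbb B_1)}$ with $\beta\bdef\sum_{|\alpha|=1}^{\alpha_0}\|V_\alpha\|_{L^\infty}+\|V_0\|_{L^s}+1$, and then unscale.

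To produce the unit-scale estimate, write the equation as $\triangle^m v=g$ with $g=-\sum\tilde V_\alpha D^\alpha v-\tilde V_0 v$. For any $1<p<s$, H\"older's inequality yields $\|g\|_{L^p(\mathbb B_{\sigma'})}\le C\beta\bigl(\|v\|_{W^{2m-1,p}(\mathbb B_{\sigma'})}+\|v\|_{L^{ps/(s-p)}(\mathbb B_{\sigma'})}\bigr)$, and feeding this into Lemma~\ref{afa} gives, for $1/2\le\sigma<\sigma'\le 1$,
$$
\|v\|_{W^{2m,p}(\mathbb B_\sigma)}\le\frac{C\beta}{(\sigma'-\sigma)^{2m}}\bigl(\|v\|_{W^{2m-1,p}(\mathbb B_{\sigma'})}+\|v\|_{L^{ps/(s-p)}(\mathbb B_{\sigma'})}\bigr).
$$
Starting from $p_0=2$, one iterates this estimate on a chain of shrinking balls with exponents $p_{k+1}=np_k/(n-2mp_k)$ while $2mp_k<n$ and $p_{k+1}=\infty$ otherwise, chained through the Sobolev embeddings $W^{2m,p_k}\hookrightarrow L^{p_{k+1}}$. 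The procedure terminates after a number of steps depending only on $n$ and $m$, at which point $W^{2m,p_N}\hookrightarrow L^\infty$ delivers the desired $L^\infty$-bound.

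Each iteration multiplies the constant by a factor of order $\beta$, and a careful accounting of the full Sobolev chain from $L^2$ to $L^\infty$ produces exactly the power $n/2$ on $\beta$; unscaling together with a standard covering argument then converts $\|v\|_{L^\infty(\mathbb B_{1/2})}\le C\beta^{n/2}\|v\|_{L^2(\mathbb B_1)}$ into \eqref{regular}. The main obstacle is handling the singular potential $V_0$: at low integrability the product $\tilde V_0 v$ cannot be placed in $L^p$ directly from H\"older, and the bootstrap is precisely what raises the integrability of $v$ enough for the H\"older pairing with $V_0\in L^s$ to close. Extra care is needed in the borderline dimension $n=4m-2$, where the critical Sobolev embedding degenerates and intermediate exponents must be chosen slightly off the endpoint, in the spirit of Case (II) of Theorem~\ref{CCarlpqp}.
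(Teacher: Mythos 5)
Your high-level plan (rescale, apply the interior $W^{2m,p}$ estimate of Lemma~\ref{afa}, bootstrap through Sobolev embeddings) is the right family of ideas, but there is a structural gap at the exact place where the power $n/2$ has to appear.

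The paper does \emph{not} obtain $\beta^{n/2}$ by tracking $\beta$ through the iteration. It instead rescales by an \emph{extra} $\beta$-dependent factor $R\sim\big(\sum_{|\alpha|\ge 1}\|V_\alpha\|_{L^\infty}+\|V_0\|_{L^s}^{s/(2ms-n)}+1\big)^{-1}$, setting $w(x)=u(Rrx)$. This choice of $R$ forces the rescaled coefficients to satisfy $\|\tilde V_\alpha\|_{L^\infty}\le 1$ and $\|\tilde V_0\|_{L^s}\le 1$, so Lemma~\ref{afa} is applied literally as stated (with its hypothesis $\|V_\alpha\|_{L^\infty}\le 1$), and the entire bootstrap runs with \emph{constants independent of $\beta$}. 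The factor $\beta^{n/2}$ then falls out of a single unscaling step, $\|u\|_{L^\infty(\mathbb B_{Rr/2})}\le C(Rr)^{-n/2}\|u\|_{L^2(\mathbb B_{Rr})}$, plus the observation $s/(2ms-n)\le 1$ (which holds under the standing hypotheses on $s$) to dominate $\|V_0\|_{L^s}^{s/(2ms-n)}$ by $\|V_0\|_{L^s}+1$.

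In your version you scale only by $r$, so the rescaled coefficients are still of size $\beta$. Two problems then arise. First, Lemma~\ref{afa} as stated requires $\|V_\alpha\|_{L^\infty}\le 1$; to use it with all lower-order terms thrown into $g$ you produce $\|v\|_{W^{2m-1,p}(\mathbb B_{\sigma'})}$ on the right-hand side, which cannot simply be fed back in --- you must interpolate $W^{2m-1,p}$ between $W^{2m,p}$ and $L^p$ and absorb, and that absorption costs a factor of order $\beta^{2m-1}$, not $\beta$, per step. Second, even setting that aside, the total number of bootstrap steps is governed by the gain $2m/n-1/s$ per step, so it depends on $s$ and is generically not $n/4m$ or $n/2$; the assertion that ``a careful accounting \ldots produces exactly the power $n/2$'' is stated without computation and does not hold as written. (The exponent you would actually get is bounded above by $n/2$ because $s/(2ms-n)\le 1$, which is the same inequality the paper uses --- but that observation belongs in the proof, not in an appeal to trust.) The fix is to insert the $\beta$-dependent rescaling $R$ before bootstrapping; then the iteration is $\beta$-free and the power $n/2$ comes cleanly from the change of variables in the $L^2$ norm.
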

\begin{proof}
  We do a scaling argument. Let $w(x)=u(Rrx)$, where $$
R=\frac{1}{C(\sum^{\alpha_0}_{|\alpha|=1}\|V_\alpha\|_{L^\infty}+\|V_0\|_{L^s}^{\frac{s}{2ms-n}}+1)}$$
and $0<r<1$. Then $w(x)$ satisfies the following equation
\begin{equation}
\lap^m w+  \sum^{\alpha_0}_{|\alpha|=1} \tilde{V}_\alpha D^\alpha
w+\tilde{V}_0 w=0, \label{rescaled}
\end{equation}
where
$$\tilde{V}_\alpha=R^{2m-\alpha} r^{2m-\alpha} V_\alpha(Rrx) \quad \mbox{and} \quad \tilde{V}_0=R^{2m}r^{2m} V_0(Rrx).  $$
It is easy to check that $\|\tilde{V}_\alpha\|_{L^\infty}\leq 1$ and
$\| \tilde{V}_0\|_{L^s}\leq 1$. We use the $W^{2m, p}$ estimate to
get the $L^\infty$ bound, then use the scaling argument to find that
how the coefficients depend on the norm of $V_{\alpha}$ and $V_0$.

We first study the case $n>4m-2$ and $s>\frac{2n}{3m}$ for higher
order elliptic equations. Assume that $g(x)$ is $\tilde{V}_0w$ in
Lemma \ref{afa}. By the $W^{2m,p}$ estimate in Lemma \ref{afa}
with $p=\frac{2s}{s+2}$ and H\"older's inequality with
$\frac{s+2}{2s}=\frac{1}{s}+\frac{1}{2}$, we obtain
\begin{align}
\|w\|_{W^{2m, \frac{2s}{s+2}}(\mathbb B_\sigma)}&\leq
C(\sigma)[\|w\|_{L^\frac{2s}{s+2}(\mathbb B_1)}+ \|\tilde{V}_0
w\|_{L^\frac{2s}{s+2}(\mathbb B_1)}]\nonumber \medskip \\
&\leq C(\sigma)(1+\|\tilde{V}_0\|_{L^s}) \|w\|_{L^2(\mathbb B_1)} \nonumber \medskip \\
&\leq  C(\sigma)\|w\|_{L^2(\mathbb B_1)} \label{sobo3}.
\end{align}
By Sobolev embedding $W^{2m, \frac{2s}{s+2}}\hookrightarrow
L^\frac{2ns}{n(s+2)-4ms}$, it follows that
$$\|w\|_{L^\frac{2ns}{(n-4m)s+2n}(\mathbb B_{\sigma})}\leq C(\sigma) \|w\|_{L^2(\mathbb B_1)}.  $$
Obviously,  $w$ in a smaller Lebesgue
 $L^p$ space with $p=\frac{2ns}{(n-4m)s+2n}>2$.

 Using  the $W^{2m,p}$ estimate in Lemma \ref{afa} again with
$p=\frac{2ns}{(n-4m)s+4n}$ and H\"older's inequality with
$\frac{1}{p}=\frac{1}{s}+\frac{(n-4m)s+2n}{2ns}$, we get
\begin{align}
\|w\|_{W^{2m, \frac{2ns}{(n-4m)s+4n}}(\mathbb B_{\sigma^2})}&\leq
C(\sigma)[\|w\|_{L^\frac{2ns}{(n-4m)s+4n}(\mathbb B_\sigma)}+
\|\tilde{V}_0
w\|_{L^\frac{2ns}{(n-4m)s+4n}(\mathbb B_1)}]\nonumber \medskip \\
&\leq C(\sigma)(1+\|\tilde{V}_0\|_{L^s}) \|w\|_{L^\frac{2ns}{(n-4m)s+2n}(\mathbb B_\sigma)} \nonumber \medskip \\
&\leq  C(\sigma)\|w\|_{L^2(\mathbb B_1)} \label{sobo32}.
\end{align}
By Sobolev embedding, it can be deduced that
\begin{equation}
\|w\|_{W^{2m, \frac{2ns}{(n-4m-4)s+4n}}(\mathbb B_{\sigma^2})}\leq
C(\sigma)\|w\|_{L^2(\mathbb B_1)}.
\end{equation}
 If we
 continue this argument as performed
 before,  we will get that $u$ in a smaller $L^p$ space with a larger exponent $p$.
 Assume that we have obtained
 $$\|w\|_{L^\frac{qs}{s-q}(\mathbb B_{\sigma^{k-1}})}\leq C(\sigma) \|w\|_{L^2(\mathbb B_1)}  $$
for some $q$ very close to $\frac{n}{2m}$ after $k-1$ steps.
Furthermore, the $W^{2m,p}$ estimate with $p=q$ and H\"older's
inequality with $\frac{1}{q}=\frac{1}{s}+\frac{s-q}{sq}$ yields that
\begin{align}
\|w\|_{W^{2m, q}(\mathbb B_{\sigma^{k}})}&\leq
C(\sigma)[\|w\|_{L^q(\mathbb B_{\sigma^{k}})}+ \|\tilde{V}_0
w\|_{L^q(\mathbb B_{\sigma^{k}})}]\nonumber \medskip \\
&\leq C(\sigma)\|w\|_{L^\frac{qs}{s-q}(\mathbb B_{\sigma^{k}})} \nonumber \medskip \\
&\leq  C(\sigma)\|w\|_{L^2(\mathbb B_1)} \label{sobo4}.
\end{align}
Keep in mind that $s> \frac{2n}{3m}$. We choose some $q'$ such that
$\frac{n}{2m}<q'<\frac{2n}{3m}$. Since $q$ is very close to
$\frac{n}{2m}$, by Sobolev embedding $W^{2m, q}\hookrightarrow
L^\frac{nq}{n-2mq}$ with $\frac{nq}{n-2mq}>\frac{q's}{s-q'}$, we can
have that
$$ \|w\|_{L^\frac{q's}{s-q'}(\mathbb B_{\sigma^{k}})}\leq
C(\sigma)\|w\|_{L^2 (\mathbb B_1)}   $$ for some $k$ depending on
$n, m$ and $s$. Using the $W^{2m,p}$ estimate with $p=q'$ and
H\"older's inequality again, we have
\begin{align}
\|w\|_{W^{2m, q'}(\mathbb B_{\sigma^{k+1}})}&\leq
C(\sigma)[\|w\|_{L^{q'}(\mathbb B_{\sigma^{k}})}+ \|\tilde{V}_0
w\|_{L^{q'}(\mathbb B_{\sigma^{k}})}]\nonumber \medskip \\
&\leq C(\sigma)\|w\|_{L^\frac{q's}{s-q'}(\mathbb B_{\sigma^{k}})} \nonumber \medskip \\
&\leq  C(\sigma)\|w\|_{L^2(\mathbb B_1)}
\label{sobo5}.
\end{align}
The Sobolev embedding implies that
\begin{equation}
\|w\|_{L^\infty(\mathbb B_{\sigma^{k+1}})}\leq
C(\sigma)\|w\|_{W^{2m, q'}(\mathbb B_{\sigma^{k+1}})}\leq
C(\sigma)\|w\|_{L^2(\mathbb B_1)}. \label{sobo}
\end{equation}
Let $\sigma^{k+1}=\frac{1}{2}$. Recall that $w(x)=u(Rrx)$, the
latter inequality implies
\begin{equation}
\|u\|_{L^\infty(\mathbb B_{{Rr}/2})}\leq C
(Rr)^{-\frac{n}{2}}\|u\|_{L^2 (\mathbb B_{Rr})}. \label{infe}
\end{equation}

If $ 2\leq n\leq 4m-2$, we can carry out the similar argument to get the
$L^\infty$ bound (\ref{infe}). Actually, it takes fewer iterations.

 For any $\mathbb B_r$, assume that the maximum value for $|u|$
is achieved in $\mathbb B_r$ at $x_0$. That is,
$\|u\|_{L^\infty(\mathbb B_r)}=|u(x_0)|$. Using the inequality
(\ref{infe}) at the ball $\mathbb B_{Rr}(x_0)$ yields that
\begin{align}
\|u\|_{L^\infty(\mathbb B_r)}&\leq C(Rr)^{-\frac{n}{2}} \|u\|_{L^2
(\mathbb B_{Rr}(x_0))} \nonumber \\
&\leq C(Rr)^{-\frac{n}{2}} \|u\|_{L^2 (\mathbb B_{2r})}.
\end{align}
Considering the definition of $R$, we obtain
\begin{equation}
\|u\|_{L^\infty(\mathbb B_r)} \leq C
(\sum^{\alpha_0}_{|\alpha|=1}\|V_\alpha\|_{L^\infty}+\|V_0\|_{L^s}^\frac{s}{2ms-n}+1)
^{\frac{n}{2}} r^{-\frac{n}{2}}\|u\|_{L^2 (\mathbb B_{2r})}.
\label{regular1}
\end{equation}
Since $\frac{s}{2ms-n}\leq 1$ if and only if $s\geq \frac{n}{2m-1}$,
we can check $\frac{s}{2ms-n}\leq 1$ from the assumption of $s$ in
all cases $n>4m-2$,$ n= 4m-2$ and $2\leq n<4m-2$. Thus, the estimate
(\ref{regular}) is achieved.

Therefore, we arrive at (\ref{regular}) in the lemma in those three
cases.
\end{proof}

Using the new $L^2\to L^p$ type Carleman estimate in Theorem
\ref{CarlpqVW} for higher order elliptic equations, we establish a
three-ball inequality that plays an important role in obtaining the
vanishing order. The three-ball inequality is also considered as a
quantitative behavior of the strong unique continuation property.
The argument is motivated by those in \cite{Ken07}.

\begin{lemma}
Let $0 < r_0< r_1< R_1 < R_0$, where $R_0 < 1$ is sufficiently
small. Let $u$ be a solution to \eqref{goal} in $B_{R_0}$. \\
I): In the case of $n> 4m-2$, assume that $s \in \pb{\frac{2n}{3m},
\iny}$,
 then
\begin{align}
&\|u\|_{L^\infty \pr{\mathbb B_{3r_1/4}}} \le
C\beta^{2m+\frac{n}{2}} |\log r_1|^m \brac{\|u\|_{L^\iny(\mathbb
B_{2r_0})}}^{k_0}
\brac{\|u\|_{L^\iny(\mathbb B_{R_1})}}^{1 - k_0} \nonumber \\
&+C \beta^{\frac n 2} \pr{\frac{R_1 }{r_1}}^{\frac n 2}
\exp\brac{C(1+\sum_{|\alpha|=1}^{\alpha_0}\|V_\alpha\|_{L^\infty}^\mu+\|V_0\|_{L^s}^\nu)
\pr{\phi\pr{\frac{R_1}{2}}-\phi(r_0)}} \|u\|_{L^\iny(\mathbb
B_{2r_0})}, \label{three}
\end{align}
where $\disp k_0 =
\frac{\phi(\frac{R_1}{2})-\phi(r_1)}{\phi(\frac{R_1}{2})-\phi(r_0)}$,
$\beta =
C(1+\sum_{|\alpha|=1}^{\alpha_0}\|V_\alpha\|_{L^\infty}+\|V_0\|_{L^s})$,
and $\mu$  and $\nu$  are as given in Theorem \ref{CarlpqVW}.

II): In the case of $n=4m-2$, assume that $s \in
\pb{\frac{4(2m-1)}{3m}, \iny}$,
 then
\begin{align}
&\|u\|_{L^\infty \pr{\mathbb B_{3r_1/4}}} \le
C\beta^{2m+\frac{n}{2}} |\log r_1|^m \brac{\|u\|_{L^\iny(\mathbb
B_{2r_0})}}^{k_0}
\brac{\|u\|_{L^\iny(\mathbb B_{R_1})}}^{1 - k_0} \nonumber \\
&+C \beta^{\frac n 2} \pr{\frac{R_1 }{r_1}}^{\frac n 2}
\exp\brac{C(1+\sum_{|\alpha|=1}^{\alpha_0}\|V_\alpha\|_{L^\infty}^\mu+\|V_0\|_{L^s}^{\tilde{\nu}})
\pr{\phi\pr{\frac{R_1}{2}}-\phi(r_0)}} \|u\|_{L^\iny(\mathbb
B_{2r_0})}, \label{threee}
\end{align}
where $\disp k_0$ and $\beta$ are the same as those in Case I, and
$\mu$ and $\tilde{\nu}$ are as given in Theorem \ref{CarlpqVW}.

III): In the case of $2\leq n< 4m-2$, assume that $s \in
\pb{\frac{4(2m-1)}{3m}, \iny}$
 then
\begin{align}
&\|u\|_{L^\infty \pr{\mathbb B_{3r_1/4}}} \le
C\beta^{2m+\frac{n}{2}} |\log r_1|^m \brac{\|u\|_{L^\iny(\mathbb
B_{2r_0})}}^{k_0}
\brac{\|u\|_{L^\iny(\mathbb B_{R_1})}}^{1 - k_0} \nonumber \\
&+C \beta^{\frac n 2} \pr{\frac{R_1 }{r_1}}^{\frac n 2}
\exp\brac{C(1+\sum_{|\alpha|=1}^{\alpha_0}\|V_\alpha\|_{L^\infty}^\mu+\|V_0\|_{L^s}^{\bar\nu})
\pr{\phi\pr{\frac{R_1}{2}}-\phi(r_0)}} \|u\|_{L^\iny(\mathbb
B_{2r_0})}, \label{threeee}
\end{align}
where $\disp k_0$ and $\beta$ are the same as those in Case I and
II, and $\mu$ and $\bar\nu$ are as given in Theorem \ref{CarlpqVW}.

\end{lemma}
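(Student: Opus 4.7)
The plan is to apply the Carleman estimate of Theorem~\ref{CarlpqVW} to $\eta u$ for a radial cutoff $\eta$ supported away from both $x_0$ and the outer boundary, and then optimize over the free parameter $\tau$ to produce a Hadamard-type three-ball interpolation. Concretely, I would choose $\eta\in C_c^\infty(\mathbb B_{R_1}\setminus\mathbb B_{r_0/2})$ with $\eta\equiv 1$ on $\mathbb B_{R_1/2}\setminus\mathbb B_{r_0}$, and with derivative bounds $|D^\alpha\eta|\le Cr_0^{-|\alpha|}$ on the inner transition annulus $A_0=\{r_0/2\le r\le r_0\}$ and $|D^\alpha\eta|\le CR_1^{-|\alpha|}$ on the outer transition annulus $A_1=\{R_1/2\le r\le R_1\}$ for $|\alpha|\le 2m$. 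Since $u$ satisfies \eqref{goal}, the full perturbed higher-order operator applied to $\eta u$ reduces to a commutator $[\triangle^m,\eta]u+\sum_{|\alpha|=1}^{\alpha_0} V_\alpha[D^\alpha,\eta]u$ supported on $A_0\cup A_1$, and this is the quantity sitting on the right-hand side when I feed $\eta u$ into Theorem~\ref{CarlpqVW}.

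For the left-hand side I would restrict the integration to $\{r_1/2\le r\le r_1\}\subset\{\eta\equiv 1\}$ and use the monotonicity of $\phi(r)=\log r+2\log|\log r|$ for $r$ small to lower-bound the weight by $e^{-\tau\phi(r_1)}$, producing a term of order $\tau^{\beta_0}e^{-\tau\phi(r_1)}|\log r_1|^{-m}r_1^{-n/p}\|u\|_{L^p(\{r_1/2\le r\le r_1\})}$. For the right-hand side the contribution from $A_0$ has weight at most $e^{-\tau\phi(r_0)}$ and involves $D^\alpha u$ for $|\alpha|\le 2m-1$ paired with $r_0^{|\alpha|-2m}$; these derivatives are controlled via the quantitative Caccioppoli estimate of Lemma~\ref{hormmm} by $\beta^{2m-1}\|u\|_{L^2(\mathbb B_{2r_0})}\le \beta^{2m-1}r_0^{n/2}\|u\|_{L^\infty(\mathbb B_{2r_0})}$, where $\beta=C(1+\sum\|V_\alpha\|_{L^\infty}+\|V_0\|_{L^s})$. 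The analogous estimate on $A_1$ yields a contribution bounded by $e^{-\tau\phi(R_1/2)}\beta^{2m-1}R_1^{n/2}\|u\|_{L^\infty(\mathbb B_{R_1})}$. The $L^p$ norm on the left is then converted to $\|u\|_{L^\infty(\mathbb B_{3r_1/4})}$ via Lemma~\ref{lemma2} at scale $r_1$ (contributing a factor $\beta^{n/2}$ and explaining the final exponent $2m+\tfrac{n}{2}$ on $\beta$), together with trivially adding back $\|u\|_{L^\infty(\mathbb B_{2r_0})}$ to account for the small ball around $x_0$ that is not covered by the chosen annulus.

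The final step is to optimize in $\tau$. The natural balancing choice is
\[
\tau_*=\frac{1}{\phi(R_1/2)-\phi(r_0)}\log\frac{\|u\|_{L^\infty(\mathbb B_{R_1})}}{\|u\|_{L^\infty(\mathbb B_{2r_0})}},
\]
and when $\tau_*$ exceeds the admissibility threshold $C_0(1+\sum\|V_\alpha\|_{L^\infty}^\mu+\|V_0\|_{L^s}^{\nu})$ required by Theorem~\ref{CarlpqVW}, substituting it produces the interpolation term of \eqref{three} with the stated exponent $k_0=[\phi(R_1/2)-\phi(r_1)]/[\phi(R_1/2)-\phi(r_0)]$. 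When instead $\tau_*$ falls below that threshold, I would take $\tau$ equal to the threshold itself; the inner-annulus term then dominates and yields exactly the second term in \eqref{three}, with the displayed exponential in $\phi(R_1/2)-\phi(r_0)$ and the prefactor $(R_1/r_1)^{n/2}\beta^{n/2}$ coming from the conversion step. Cases~II and~III of the lemma are obtained by the identical argument, invoking parts~II and~III of Theorem~\ref{CarlpqVW}, with only the exponent $\nu$ replaced by $\tilde\nu$ or $\bar\nu$ in the admissibility condition. The main difficulty I expect is bookkeeping: tracking the powers of $r_0,r_1,R_1$, the factors of $\tau$, and the logarithmic weights through the commutator expansion, the single Caccioppoli step, and Lemma~\ref{lemma2}, in such a way that the $\beta^{2m-1}$ from derivative control and the $\beta^{n/2}$ from the $L^\infty$ conversion combine additively in the exponent of $\beta$ to give exactly $2m+\tfrac{n}{2}$ rather than something larger.
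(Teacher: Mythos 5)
Your proposal tracks the paper's proof very closely: the same annular cutoff, the same application of Theorem~\ref{CarlpqVW} to $\eta u$ with the commutator $[\triangle^m,\eta]u+\sum V_\alpha[D^\alpha,\eta]u$ on the transition annuli, the same use of Lemma~\ref{hormmm} to dominate the derivatives appearing in the commutator, the same conversion to $L^\infty$ via Lemma~\ref{lemma2}, and the same two-case optimization in $\tau$ (balancing when the balancing value exceeds the admissibility threshold, and plugging in the threshold otherwise). Your balancing choice $\tau_*$ coincides with the paper's after substituting the definition of $k_0$, modulo the bounded prefactors $B_1,B_2$, which do not affect the outcome. One small bookkeeping note: the exponent $2m$ (rather than $2m-1$) on $\beta$ in front of the interpolation term arises because, beyond the $\beta^{2m-1}$ from Caccioppoli, the terms $\sum V_\alpha[D^\alpha,\eta]u$ in the commutator contribute an additional factor $\|V_\alpha\|_{L^\infty}\le\beta$; once that is folded in, the total $\beta^{2m+n/2}$ follows as you anticipated.
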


\begin{proof}
We first consider the case $n> 4m-2$ and $s \in \pb{\frac{2n}{3m},
\iny}$. Let $r_0< r_1< R_1$. The standard notation $\brac{a,b}$ is
denoted as  closed annulus with inner radius $a$ and outer radius
$b$. Choose a smooth function $\eta\in C^\infty_{0}(\mathbb
B_{R_0})$ with $B_{2R_1}\subset B_{R_0}$. Let
$$D_1=\brac{\frac{3}{2}r_0, \frac{1}{2}R_1 }, \quad  \quad
D_2= \brac{r_0, \frac{3}{2}r_0}, \quad \quad
D_3=\brac{\frac{1}{2}R_1, \frac{3 }{4}R_1}.$$ We define $\eta$ as
$\eta=1$ on $D_1$ and $\eta=0$ on $[0, \ r_0]\cup
\brac{\frac{3}{4}R_1, \ R_1}$. Then we have $| D^\alpha \eta|\leq
\frac{C}{r_0^{|\alpha|}}$ on $D_2$. Similarly, $| D^\alpha \eta|\leq
\frac{C}{R_1^{|\alpha|}}$ on $D_3$.

Since $u$ is a solution to \eqref{goal} in $\mathbb B_{R_0}$, by
regularity argument mentioned in the introduction, $u \in
L^\iny\pr{\mathbb B_{R_1}} \cap W^{2m,2}\pr{\mathbb B_{R_1}}$.
Therefore, by regularization, the estimate in Theorem \ref{CarlpqVW}
holds for $\eta u$. To use the Carleman estimates in Theorem
\ref{CarlpqVW}, we
 substitute $\eta u$ into (\ref{main}). The following holds
\begin{align}
&\tau^{\be_0} \|e^{-\tau \phi(r)}(\log r)^{-m} \eta
u\|_{L^p(r^{-n}dx)} + \sum^{2m-1}_{|\alpha|=1} \tau^{\be_\alpha}
\|e^{-\tau \phi(r)}(\log r )^{-m} r^{|\alpha|}
 D^\alpha u\|_{L^2(r^{-n}dx)}
\nonumber \medskip\\
&\leq  C \| e^{-\tau \phi(r)} r^{2m} (\triangle^m (\eta u)+
\sum_{|\alpha|=1}^{\alpha_0} V_\alpha D^\alpha (\eta u)+ V_0 \eta
u)\|_{L^2(r^{-n} dx)} ,
\end{align}
whenever
$$  \tau>C(1+\sum_{|\alpha|=1}^{\alpha_0}\|V_\alpha\|_{L^\infty}^\mu+\|V_0\|_{L^p}^\nu).       $$
Consider that $u$ is a solution to equation \eqref{goal}, further
calculations show that
\begin{align}
&\tau^{\be_0} \|e^{-\tau \phi(r)}(\log r)^{-m} \eta
u\|_{L^p(r^{-n}dx)} \nonumber \medskip\\&\leq  C \| e^{-\tau
\phi(r)} r^{2m} ([\triangle^m, \eta]u+\eta\triangle^m u+
\sum_{|\alpha|=1}^{\alpha_0} V_\alpha [D^\alpha,  \eta]
u+\sum_{|\alpha|=1}^{\alpha_0} \eta D^\alpha  u   + V_0 \eta
u)\|_{L^2(r^{-n} dx)}
 \nonumber \medskip\\ &\leq  C \| e^{-\tau \phi(r)} r^{2m}
([\triangle^m, \eta]u+\sum_{|\alpha|=1}^{\alpha_0} V_\alpha
[D^\alpha,  \eta] u\|_{ L^2 (r^{-n} dx)}.
\end{align}
Note that $[\triangle^m, \eta]$ is a $2m-1$ order differential
operator on $u$ involving the derivative of $\eta$. From the last
inequality and $p\geq 2$, we have
\begin{equation}
\tau^{\be_0} \|e^{-\tau \phi(r)}(\log r)^{-m} \eta
u\|_{L^2(r^{-n}dx)}\leq C \mathcal{K}, \label{jaja}
\end{equation}
where \begin{align}\mathcal{K}&=\| e^{-\tau \phi(r)} r^{2m}
([\triangle^m, \eta]u+\sum_{|\alpha|=1}^{\alpha_0} V_\alpha
[D^\alpha,  \eta] u\|_{ L^2 (r^{-n} dx)}\nonumber \\
&\leq \sum^{2m-1}_{\alpha=0} C\|e^{-\tau \phi(r)}
r^{|\alpha|}|D^\alpha u|\|_{ L^2 ( D_2\cup D_3, r^{-n} dx)}
\nonumber \\
&+\sum_{|\alpha|=1}^{\alpha_0} (\|V_\alpha\|_{L^\infty}+1) \|
e^{-\tau \phi(r)} r^{|\alpha|}|D^\alpha  u|\|_{ L^2 ( D_2\cup D_3,
r^{-n} dx)}.
\end{align}
Recall that $\beta=C(\sum_{|\alpha|=1}^{
\alpha_0}\|V_{\alpha}\|_{L^\infty}+\|V_0\|_{L^s}+1).$ By Lemma
\ref{hormmm} and the fact that $-\phi(r)$ is decreasing, we have
\begin{align}
 \| e^{-\tau \phi(r)}
r^{|\alpha|}D^\alpha  u\|_{ L^2 ( D_2, r^{-n} dx)} &\leq C e^{-\tau
\phi(r_0)}r_0^{-\frac{n}{2}}  \| r^{|\alpha|}D^\alpha u\|_{ L^2 (
D_2, dx)} \nonumber \\
& \leq C\beta^{2m-1} e^{-\tau \phi(r_0)}r_0^{-\frac{n}{2}}
\|u\|_{L^2\pr{\mathbb B_{2r_0}\backslash \mathbb B_{{r_0}/{2}}}}.
\end{align}
Similarly,
\begin{align}
 \| e^{-\tau \phi(r)}
r^{|\alpha|}D^\alpha  u\|_{ L^2 ( D_3, r^{-n} dx)} &\leq e^{-\tau
\phi(\frac{R_1}{2})}R_1^{-\frac{n}{2}}  \| r^{|\alpha|}D^\alpha
u\|_{ L^2 (
D_3, dx)} \nonumber \\
& \leq C\beta^{2m-1} e^{-\tau \phi(\frac{R_1}{2})}R_1^{-\frac{n}{2}}
\|u\|_{L^2\pr{\mathbb B_{R_1}\backslash \mathbb B_{{R_1}/{4}}}}.
\end{align}
We conclude that
\begin{eqnarray*}
\mathcal{K} &\leq & C\beta^{2m} e^{-\tau
\phi(r_0)}r_0^{-\frac{n}{2}} \|u\|_{L^2\pr{\mathbb
B_{2r_0}\backslash
\mathbb B_{{r_0}/{2}}}}\nonumber \\
 &+& C\beta^{2m} e^{-\tau
\phi(\frac{R_1}{2})}R_1^{-\frac{n}{2}} \|u\|_{L^2\pr{\mathbb
B_{R_1}\backslash \mathbb B_{{R_1}/{4}}}}.
\end{eqnarray*}
Define a new set $D_4=\{r\in D_1, \ r\leq r_1\}$. From \eqref{jaja}
and the fact that $\tau \ge 1$ and $\be_0 > 0$, it is attained that
\begin{align*}
\| u\|_{L^2 (D_4)} &\le \tau^{\beta_0}\| u\|_{L^2 (D_4)} \\ &\le
\tau^{\beta_0} \|e^{\tau\phi(r)}(\log r)^m
r^{\frac{n}{2}}\|_{L^\iny\pr{D_4}}
 \|e^{-\tau\phi(r)} (\log r)^{-m} u\|_{L^2 (D_4, r^{-n}dx)}  \\
&\le e^{\tau \phi(r_1)} |\log r_1|^m r_1^{\frac{n}{2}} \mathcal{K},
\end{align*}
where the fact that $e^{\tau\phi(r)}|\log r|^m r^{\frac{n}{2}}$ is
increasing on $D_4$ for $R_0$ sufficiently small is used. Adding
$\|u\|_{L^2 \pr{\mathbb B_{3r_0/2}}}$ to both sides of the last
inequality and taking the upper bound of $\mathcal{K}$ into account
yields that
\begin{align*}
\| u\|_{L^2 (\mathbb B_{r_1})} &\le C |\log r_1|^m
\beta^{2m}\pr{\frac{r_1}{r_0}}^{\frac{n}{2}}e^{\tau \brac{\phi(r_1)-
\phi(r_0)}}\|u\|_{L^2(\mathbb B_{2r_0})} \\
&+ C |\log r_1|^m \beta^{2m}
\pr{\frac{r_1}{R_1}}^{\frac{n}{2}}e^{\tau\brac{\phi(r_1) -
\phi\pr{\frac{R_1}{2}}}} \|u\|_{L^2(\mathbb B_{R_1})}.
\end{align*}
Let $U_1 =\|u\|_{L^2(\mathbb B_{2r_0})}$, $U_2=\|u\|_{L^2(\mathbb
B_{R_1})}$ and define
\begin{align*}
B_1 &= C |\log r_1|^m   \beta^{2m} \pr{\frac{r_1}{r_0}}^{\frac{n}{2}},  \\
B_2 &= C |\log r_1|^m \beta^{2m}\pr{\frac{r_1}{R_1}}^{\frac{n}{2}}.
\end{align*}
Then the last inequality leads to
\begin{eqnarray}
\| u\|_{L^2 (\mathbb B_{r_1})}  &\leq& B_1 \brac{\frac{\exp
\pr{\phi(r_1)}}{\exp\pr{ \phi(r_0)}}}^\tau U_1 + B_2
\brac{\frac{\exp\pr{ \phi(r_1)}}{\exp\pr{
\phi\pr{\frac{R_1}{2}}}}}^\tau U_2. \label{D4est}
\end{eqnarray}
Define a new parameter $k_0$ as follow
$$\frac{1}{k_0}=\frac{\phi(\frac{R_1}{2})-\phi(r_0)}{\phi(\frac{R_1}{2})-\phi(r_1)}.$$
Recall that $\phi(r)=\log r+\log (\log r)^2$. If we fix $r_1$ and
$R_1$, and choose $r_0$ to be sufficiently small, i.e. $r_0\ll r_1$,
then $\frac{1}{k_0}\simeq \log \frac{1}{r_0}$. Set
$$\tau_1 =\frac{k_0}{\phi\pr{\frac{R_1}{2}}-\phi(r_1)}\log\pr{\frac{B_2{U}_2}{B_1 {U}_1}}.$$
If $\tau_1 \ge C(1+\sum_{|\alpha|=1}^{
\alpha_0}\|V_\alpha\|_{L^\infty}^\mu+\|V_0\|_{L^s}^\nu)$, then the
previous calculations hold with $\tau = \tau_1$. We get from
\eqref{D4est} that
\begin{eqnarray}
\| u\|_{L^2 (\mathbb B_{r_1})}  &\leq& 2\pr{B_1 U_1}^{k_0}\pr{B_2
U_2}^{1 - k_0}. \label{mix1}
\end{eqnarray}
On the other hand, if $\tau_1
<C(1+\sum_{\alpha=1}^{\alpha_0}\|V_\alpha\|_{L^\infty}^\mu+\|V_0\|_{L^s}^\nu)
$, it follows that
\begin{align*}
U_2 < \frac{B_1}{B_2}
\exp\brac{C(1+\sum_{|\alpha|=1}^{\alpha_0}\|V_\alpha\|_{L^\infty}^\mu+\|V_0\|_{L^s}^\nu)
\pr{\phi\pr{\frac{R_1}{2}}-\phi(r_0)}} U_1.
\end{align*}
We can write the last inequality as
\begin{equation}
\|u\|_{L^2 (\mathbb B_{r_1})} \le C \pr{\frac{R_1}{r_0}}^{\frac n 2}
e^{C(1+\sum_{|\alpha|=1}^{
\alpha_0}\|V_\alpha\|_{L^\infty}^\mu+\|V_0\|_{L^p}^\nu)
\pr{\phi\pr{\frac{R_1}{2}}-\phi(r_0)}} \|u\|_{L^2(\mathbb
B_{2r_0})}. \label{mix2}
\end{equation}
Together with \eqref{mix1} and \eqref{mix2}, we obtain that
\begin{align}
\| u\|_{L^2 (\mathbb B_{r_1})} &\le C ( \sum_{|\alpha|=1}^{
\alpha_0}\|V_{\alpha}\|_{L^\infty}+\|V_0\|_{L^s}+1)^{2m}  |\log r_1|^m r_1^{\frac n 2}\brac{ r_0^{-\frac{n}{2}}
\|u\|_{L^2(\mathbb B_{2r_0})}}^{k_0} \nonumber \\
&\times \brac{R_1^{-\frac{n}{2}} \|u\|_{L^2(\mathbb B_{R_1})}}^{1 - k_0} \nonumber \\
&+C  \pr{\frac{R_1}{r_0}}^{\frac n 2} e^{C(1+\sum_{|\alpha|=1}^{
\alpha_0}\|V_\alpha\|_{L^\infty}^\mu+\|V_0\|_{L^s}^\nu)
\pr{\phi\pr{\frac{R_1}{2}}-\phi(r_0)}} \|u\|_{L^2(\mathbb
B_{2r_0})}.
 \label{end2}
\end{align}
Recall from Lemma \ref{lemma2} that
\begin{align}
\|u\|_{L^\infty(\mathbb B_r)} \leq C (\sum_{|\alpha|=1}^{
\alpha_0}\|V_\alpha\|_{L^\infty}+\|V_0\|_{L^s}+1)^{\frac{n}{2}}
r^{-\frac{n}{2}}\|u\|_{L^2 (\mathbb B_{2r})}. \label{ell}
\end{align}
Combining the estimates \eqref{end2} and \eqref{ell}, the three-ball
inequality in the $L^\infty$-norm in the form of \eqref{three} is
derived.

For the case $n=4m-2$ and $s \in \pb{\frac{4(2m-1)}{3m}, \iny}$, the
same argument using the Carleman estimates (\ref{main1}) will give
\eqref{threee}. If $2\leq n <4m-2$ and $s \in \pb{\frac{4(2m-1)}{3m}, \iny}$, we can also
obtain the inequality (\ref{threeee}) from the Carleman estimates
(\ref{main3}) by performing the same argument as Case I. This
completes the proof of the lemma.
\end{proof}

The inequalities \eqref{three}, \eqref{threee} and \eqref{threeee}
are the three-ball inequalities we use in the proof of Theorem
\ref{thh}. We first use the three-ball inequality in the propagation
of smallness argument to establish a lower bound for the solution on
$\mathbb B_r$. Similar arguments have been performed in
\cite{Zhu16}. Then we use the three-ball inequality again to
establish the order of vanishing estimate.

\begin{proof} [Proof of Theorem  \ref{thh}]
Without loss of generality, we may assume that $x_0$ is the origin.
We first consider the case $I$. Let $r_0=\frac{r}{2}$, $r_1=4r$ and
$R_1=10r$. Then the estimate \eqref{three} implies that
\begin{align}
\|u\|_{L^\infty \pr{\mathbb B_{3r}}} &\le  C \beta^{\frac{4m+n}{2}}  |\log r|^m
\|u\|_{L^\iny(\mathbb B_{r})}^{k_0} \|u\|_{L^\iny(\mathbb B_{10r})}^{1 - k_0} \nonumber \\
&+C \exp\brac{ C(1+\sum_{|\alpha|=1}^{
\alpha_0}\|V_\alpha\|_{L^\infty}^\mu+\|V_0\|_{L^s}^\nu)
\pr{\phi\pr{5r}-\phi\pr{\frac r 2}} } \|u\|_{L^\iny(\mathbb B_{r})},
\label{refi}
\end{align}
where $\disp k_0 = \frac{\phi(5r)-\phi(4r)}{\phi(5r)-\phi\pr{\frac r
2}}$. It is obvious that
$$c\leq \phi(5r)-\phi\pr{\frac{r}{2}}\leq C \quad \mbox{and} \quad c\leq \phi(5r)-\phi(4r)\leq C,$$
where $C$ and $c$ are positive constants are independent of $r$.
Thus, the parameter $k_0$ does not depend on  $r$.

We choose a small $r < \frac 1 2$ such that
$$\sup_{\mathbb B_r(0)}|u|=\delta,$$
where $\delta>0$. Otherwise, by the unique continuation, $u\equiv 0$
in $\mathbb B_1$, which is impossible. Since $\disp \sup_{|x|\leq
1}|u(x)|\geq 1$, by continuity, there exists some $\bar x\in \mathbb
B_1$ such that $\disp \abs{u(\bar x)}=\sup_{|x|\leq 1}|u(x)|\geq 1$.
There also exists a sequence of balls with radius $r$, centered at
$x_0=0, \ x_1, \ldots, x_d$ so that $x_{i+1}\in \mathbb B_{r}(x_i)$
for every $i$, and $\bar x\in \mathbb B_{r}(x_d)$. The number of
balls, $d$, depends on the radius $r$ that will be fixed later. The
application of $L^\infty$-version of three-ball inequality
(\ref{refi}) at the origin and the boundedness assumption that
$\|u\|_{L^\infty(\mathbb B_{10})}\leq \hat{C}$ yield that
\begin{align*}
\|u\|_{L^\infty \pr{\mathbb B_{3r}(0)}} &\le C \delta^{k_0} (
\sum_{1\leq \alpha\leq
2m-1}\|V_{\alpha}\|_{L^\infty}+\|V_0\|_{L^s}+1)^{C}  |\log r|^m  \nonumber \\
&+ \delta \exp\brac{ C(1+\sum_{|\alpha|=1}^{
\alpha_0}\|V_\alpha\|_{L^\infty}^\mu+\|V_0\|_{L^s}^\nu)}.
\end{align*}
By the way each $\mathbb B_r(x_i)$ is chosen, we obtain $\mathbb
B_r(x_{i+1})\subset \mathbb B_{3r}(x_{i})$. Hence, for every $i = 1,
2, \ldots, d$,
\begin{equation}
\|u\|_{L^\infty (\mathbb B_r(x_{i+1}))}\leq  \|u\|_{L^\infty
(\mathbb B_{3r}(x_{i}))}. \label{bbb}
\end{equation}
Repeating the above argument with balls centered at $x_i$ and making
use of \eqref{bbb} give that
\begin{equation*}
\|u\|_{L^\infty (\mathbb B_{3r}(x_{i}))} \leq C_i \delta^{D_i} |\log
r|^{F_i} \exp\brac{H_i(1+\sum_{|\alpha|=1}^{
\alpha_0}\|V_\alpha\|_{L^\infty}^\mu+\|V_0\|_{L^s}^\nu) }
\end{equation*}
for $i=0, 1, \cdots, d$, where $C_i$ is a constant depending on $d$,
$n$, $m$, $s$, $\hat C$, and $C$ from Lemma \ref{CarlpqVW} and
$D_i$, $E_i$, $F_i$ $H_i$ are constants depending on $n$, $m$, and
$d$. By the fact that $\abs{u(\bar x)} \geq 1$ and $\bar x \in
B_{3r}(x_d)$, we get
\begin{equation*}
\delta \ge c \exp\brac{- C(1+\sum_{|\alpha|=1}^{
\alpha_0}\|V_\alpha\|_{L^\infty}^\mu+\|V_0\|_{L^s}^\nu) } |\log
r|^{-C},
\end{equation*}
where $C$ depends on $d$, $n$, $m$, and $\hat C$.

Now the radius $r$ is fixed as a small number so that $d$ is a fixed
constant. We are going to apply the three-ball inequality again. Let
$\frac{3}{4}r_1=r$, $R_1=10r$ and let $r_0 << r$, i.e. $r_0$ is
sufficiently small with respect to $r$. Hence, the three-ball
inequality \eqref{three} implies that
$$ \delta \leq {I}_1 +I_2,$$
where
\begin{align*}
{ I}_1 &= C \beta^{2m+\frac n 2}  |\log r|^m \brac{
\|u\|_{L^\iny(\mathbb B_{2r_0})}}^{k_0} \brac{  \|u\|_{L^\iny(\mathbb B_{10 r})}}^{1 - k_0}, \\
I_2 &= C \beta^{\frac n 2} e^{C(1+\sum_{|\alpha|=1}^{
\alpha_0}\|V_\alpha\|_{L^\infty}^\mu+\|V_0\|_{L^s}^\nu)
\pr{\phi\pr{5r}-\phi(r_0)}} \|u\|_{L^\iny(\mathbb B_{2r_0})}
\end{align*}
with $\disp k_0 = \frac{\phi(5r)-\phi(\frac 4 3
r)}{\phi(5r)-\phi(r_0)}$.

If ${I}_1 \leq I_2$, we have
\begin{align*}
&\exp\brac{- C(1+\sum_{|\alpha|=1}^{
\alpha_0}\|V_\alpha\|_{L^\infty}^\mu+\|V_0\|_{L^s}^\nu) } |\log
r|^{-C}
\le \delta \le 2 I_2 \\
&\le 2 \beta^{\frac n 2}e^{C(1+\sum_{|\alpha|=1}^{
\alpha_0}\|V_\alpha\|_{L^\infty}^\mu+\|V_0\|_{L^s}^\nu)
\pr{\phi\pr{5r}-\phi(r_0)}} \|u\|_{L^\iny(\mathbb B_{2r_0})}.
\end{align*}
Since $r_0 << r$, it is true that $\phi\pr{r_0}-\pr{C + \phi\pr{5r}}
\ge c \phi\pr{r_0}$. We get that
\begin{align*}
\|u\|_{L^\iny(\mathbb B_{2r_0})} &\ge C r_0^{C(1+\sum_{|\alpha|=1}^{
\alpha_0}\|V_\alpha\|_{L^\infty}^\mu+\|V_0\|_{L^s}^\nu)}.
\end{align*}
Instead, if $I_2 \leq { I}_1$, we obtain that
\begin{align*}
&\exp\brac{- C(1+\sum_{|\alpha|=1}^{
\alpha_0}\|V_\alpha\|_{L^\infty}^\mu+\|V_0\|_{L^s}^\nu) } |\log
r|^{-C}
\le \delta \le 2 {I}_1 \\
&\le 2 C \beta^{2m+\frac n 2} |\log r|^m \brac{
\|u\|_{L^\iny(\mathbb B_{2r_0})}}^{k_0} \brac{ \|u\|_{L^\iny(\mathbb
B_{10 r})}}^{1 - k_0}.
\end{align*}
If we raise both sides to $\frac{1}{k_0}$ in the last inequality and
take the assumption $\|u\|_{L^\infty\pr{B_{10r}}}\leq \hat{C}$ into
consideration, it follows that
\begin{align*}
 \|u\|_{L^\iny(\mathbb B_{2r_0})}
 &\ge C \pr{\frac{C}{\hat C \abs{\log r}^C }}^{\frac 1 {k_0}} \exp\brac{-\frac{C}{k_0}
(1+\sum_{|\alpha|=1}^{
\alpha_0}\|V_\alpha\|_{L^\infty}^\mu+\|V_0\|_{L^s}^\nu)}.
\end{align*}
Recall that $\frac{1}{k_0}\simeq\log \frac{1}{r_0}$ if $r_0$ is
sufficiently small compared with $r$. We arrive at
\begin{align*}
 \|u\|_{L^\iny(\mathbb B_{2r_0})}
 &\ge C r_0^{C(1+\sum_{|\alpha|=1}^{
\alpha_0}\|V_\alpha\|_{L^\infty}^\mu+\|V_0\|_{L^s}^\nu)
 },
\end{align*}
which shows the proof of case I in Theorem \ref{thh}.

The case $\Pi$ or III follows from the same argument using the
three-ball inequality (\ref{threee}) or (\ref{threeee}). Therefore,
the proof of Theorem \ref{thh} is done.
\end{proof}

\section{Quantitative unique continuation at infinity}
\label{QuantUC}

In this section, we show the proof Theorem \ref{UCVW}. By the
maximal order of vanishing estimates, the quantitative unique
continuation at infinity is established using the idea of scaling
arguments in \cite{BK05}.

\begin{proof}[Proof of Theorem \ref{UCVW}]
Case I): We consider the case $n> 4m-2$ and $s \in
\pb{\frac{2n}{3m}, \iny}$. Assume $u$ be a solution to \eqref{goal}
in $\R^n$. Let $x_0 \in \R^n$ and set $\abs{x_0} = R$.
%Define $\disp \mathbf{M}(R) = \inf_{\abs{x_0} = R}\norm{u}_{L^2\pr{B_1(x_0)}}$.
Define $u_R(x) = u(x_0 + Rx)$. % for some $a > 0$ to be determined.
Set $$V_{\alpha, R}\pr{x} = R^{2m-|\alpha|} \, V_\alpha\pr{x_0 + R
x} \quad \mbox{and} \quad V_{0, R}\pr{x} = R^{2m} V_0\pr{x_0 + R
x}.$$ For any $r
> 0$, elementary calculations show that
\begin{align*}
\norm{V_{0, R}}_{L^s\pr{\mathbb B_r\pr{0}}} &=
\pr{\int_{\mathbb B_r\pr{0}} \abs{R^{2m} \, V_0\pr{x_0 + R x}}^s dx}^{\frac 1 s} \\
&=  R^{2m - \frac n s } \norm{V_0}_{L^s\pr{\mathbb B_{r
R}\pr{x_0}}}.
\end{align*}
Thus,
$$\disp \|V_{0,
R}\|_{L^s\pr{\mathbb B_{10}\pr{0}}} \le A_0 R^{2m - \frac n s}.$$ It
is clear that
$$\disp \|V_{\alpha, R}\|_{L^\infty\pr{\mathbb B_{10}\pr{0}}}
= R^{2m-\alpha} \|V_\alpha\|_{L^\infty\pr{\mathbb B_{10R}\pr{x_0}}}
\le A_{\alpha} R^{2m-\alpha}.$$ We can check that $u_R$ satisfies
the following scaled version of \eqref{goal} in $\mathbb B_{10}$,
\begin{align}
& \LP u_R\pr{x} + \sum_{|\alpha|=1}^{
\alpha_0} V_{\alpha, R}\pr{x}D^\alpha u_R\pr{x}  + V_{0,R}\pr{x} u_R\pr{x} \nonumber \\
&= R^{2m} \LP u\pr{x_0 + R x} + \sum_{|\alpha|=1}^{ \alpha_0}
R^{2m-\alpha}V_{\alpha, R}\pr{x} R^\alpha D^\alpha u_R\pr{x}
\nonumber \\& +
R^{2m} V_{0, R}\pr{x_0 + R x}u\pr{x_0 + R x} \nonumber \\
&= 0.\label{rescal}
\end{align}
 Obviously,
\begin{align*}
\norm{u_R}_{L^\iny\pr{\mathbb B_{10}}} &=
\norm{u}_{L^\iny\pr{\mathbb B_{10R}\pr{x_0}}} \le C_0.
\end{align*}
Set $\disp\widetilde{x_0} := -x_0/R$. Then $\disp| \widetilde{x_0}|
= 1$ and $\abs{u_R(\widetilde{x_0})} = \abs{u(0)} \ge 1$. Namely,
$\disp\norm{u_R}_{L^\iny(B_1)} \ge 1$. Therefore, if $R >> 1$, then
the application of Theorem \ref{thh} to $u_R$
 and
$\hat C = C_0$ yields that
\begin{align*}
\norm{u}_{L^\iny\pr{{\mathbb B_{1}(x_0)}}} = & \norm{u_R}_{L^\iny\pr{\mathbb B_{1/R}(0)}}  \\
\ge & c(1/R)^{^{C\brac{1 + \sum_{|\alpha|=1}^{ \alpha_0}\pr{A_\alpha
R^{2m - |\alpha|}}^\mu + C_2 \pr{A_0 R^{2m - \frac n s} }^\nu}}} \\
= &c \exp\set{-C\brac{1 + \sum_{|\alpha|=1}^{ \alpha_0} \pr{A_\alpha
R^{2m - |\alpha|}}^\mu + C_2 \pr{A_0 R^{2m - \frac n s} }^\nu} \log
R}.
\end{align*}
Recall that $\mu=\frac{2}{3m-2|\alpha|}$ and
$\nu=\frac{2s}{3ms-2n}$. We can check that $(2m-|\alpha|)\mu$ is
increasing with respect to $|\alpha|$. So its maximum value is
achieved at $\frac{2(2m-\alpha_0)}{3m-\alpha_0}$. It can be shown
that

\begin{align*}
\max\set{(2m-|\alpha|)\mu, \ \pr{2m - \frac n s}\nu }= \Theta :=
\left\{\begin{array}{ll}
\frac{2(2m -\alpha_0)}{3m-2\alpha_0} \quad &  \alpha_0 \geq \frac{n}{s}, \bigskip \\
\frac{2(2ms-n)}{3ms-2n} & \alpha_0 < \frac{n}{s}.
\end{array}\right.
\end{align*}
Therefore,
\begin{align*}
\norm{u}_{L^\iny\pr{{\mathbb B_{1}(x_0)}}} \ge &c \exp\brac{-C\pr{n,
m, s, A_0, \cdots, A_{\alpha_0}} R^\Theta \log R}.
\end{align*}

Case II): In the case of $n=4m-2$ and $s\in (\frac{4(2m-1)}{3m}, \,
\infty]$, similar arguments work. We need to find the
$\max\set{(2m-|\alpha|)\mu, \ \pr{2m - \frac n s}\tilde{\nu} }$.
Recall that $$\tilde{\nu}=\frac{2s}{3ms-4(2m-1)-2(2m-1)(s-2)\eps}.$$
We can check that
\begin{align*}
&\max\set{(2m-|\alpha|)\mu, \ \pr{2m - \frac n s}\tilde{\nu} }
\nonumber \\ &= \tilde{\Theta} := \left\{\begin{array}{ll}
\frac{2(2m -\alpha_0)}{3m-2\alpha_0} &  \alpha_0 \geq \frac{8m(2m-1)-3mn+4m(2m-1)(s-2)\epsilon}{ms+4(2m-1)-2n+2(2m-1)(s-2)\epsilon}, \bigskip \\
\frac{2(2ms-n)}{3ms-4(2m-1)-2(2m-1)(s-2)\epsilon} & \alpha_0 <
\frac{8m(2m-1)-3mn+4m(2m-1)(s-2)\epsilon}{ms+4(2m-1)-2n+2(2m-1)(s-2)\epsilon}.
\end{array}\right.
\end{align*}
Then
\begin{align*}
\norm{u}_{L^\iny\pr{{\mathbb B_{1}(x_0)}}} \ge c \exp\brac{-C\pr{n,
m, s, \eps, A_0, \cdots, A_{{\alpha_0}}} R^{\tilde{\Theta}} \log R}.
\end{align*}

Case III): For the case $2\leq n<4m-2$ and $s\in (\frac{4(2m-1)}{3m}, \,
\infty]$. As before, we need to find the
$\max\set{(2m-|\alpha|)\mu, \ \pr{2m - \frac n s}\bar{\nu} }$.
We can check that
\begin{align*}
&\max\set{(2m-|\alpha|)\mu, \ \pr{2m - \frac n s}\bar{\nu} }
\nonumber \\ &= \bar{\Theta} := \left\{\begin{array}{ll}
\frac{2(2m -\alpha_0)}{3m-2\alpha_0} &  \alpha_0 \geq \frac{8m(2m-1)-3mn}{ms+4(2m-1)-2n}, \bigskip \\
\frac{2(2ms-n)}{3ms-4(2m-1)} & \alpha_0 <
\frac{8m(2m-1)-3mn}{ms+4(2m-1)-2n}.
\end{array}\right.
\end{align*}
Thus,
$$\norm{u}_{L^\iny\pr{{\mathbb B_{1}(x_0)}}} \ge c \exp\brac{-C\pr{n,
m, A_0, \cdots, A_{{\alpha_0}}} R^{\bar{\Theta}} \log R}.
$$

 Therefore, the conclusion of the theorem follows.
\end{proof}

\end{document}